\documentclass[11pt,reqno]{amsart}

\usepackage{graphicx} % Required for inserting images
\usepackage{amsmath}
\usepackage{mathtools}
\usepackage{mathrsfs}
\usepackage{amssymb}
\usepackage{appendix}
\usepackage{mathtools}
\usepackage{xurl}
\usepackage{hyperref}

\usepackage{microtype}

\usepackage{blindtext}
\usepackage[dvipsnames]{xcolor}
\usepackage[normalem]{ulem}
\usepackage{soul}
\usepackage[backend=biber,style=alphabetic]{biblatex}

\usepackage{bm}
\usepackage{hyperref}

\newtheorem{theorem}{Theorem}[section]
\newtheorem{corollary}[theorem]{Corollary}
\newtheorem{lemma}[theorem]{Lemma}

\newtheorem{proposition}[theorem]{Proposition}
\newtheorem{remark}[theorem]{Remark}

\newtheorem{claim}[theorem]{Claim}

\newcommand{\bb}[1]{\mathbb{#1}}

\newcommand{\norm}[1]{\left|#1\right|}

\newcommand{\ra}{\rightarrow}
\newcommand{\parent}[1]{\left( #1 \right)}

\newcommand{\bR}{\bb{R}}

\newcommand{\eps}{\epsilon}

\newcommand{\lda}{\lambda}

\newcommand{\ric}{\text{Ric}}

\newcommand{\Lda}{\Lambda}

\newcommand{\rom}[1]{%
  \textup{\uppercase\expandafter{\romannumeral#1}}%
}

\newcommand{\fl}{\text{flat}}
\newcommand{\bD}{\mathbb{D}}
\newcommand{\old}{\overline{\mathbb{D}}}
\newcommand{\hpi}{\frac{\pi}{2}}

\numberwithin{equation}{section}
\AtBeginBibliography{%
  \setlength{\emergencystretch}{5em}%
}
\title[Uniqueness of 2D Ancient Ricci Flows with Boundary]{Uniqueness of Positively Curved Ancient Ricci Flows on Surfaces with Boundary}
\author{Kyeongho Bang}
\address{Department of Mathematical Sciences, Korea Advanced Institute of Science and Technology, 34141 Daejeon, Korea}
\email{khbang01@kaist.ac.kr}
\author{Eric Chen}
\address{Department of Mathematics, University of Illinois Urbana-Champaign, Urbana, Illinois 61801,  USA}
\email{ecchen@illinois.edu}
\author{Wenkui Du}
\address{School of Mathematics, Hunan University, Changsha, 410082, Hunan Province, China}
\email{duwenkui@hnu.edu.cn}

\date{}

\begin{document}

\begin{abstract}
    We establish the existence and uniqueness modulo time-independent diffeomorphisms of the positively curved ancient Ricci flow $(M^2, \partial M^2, g(t))$ on a two-dimensional surface with boundary, assuming uniformly bounded diameter and constant positive boundary geodesic curvature. In particular, this ancient Ricci flow is rotationally symmetric, its backward limit is the flat disk, and its forward limit is a half-spherical singularity.  To our knowledge, this result is the first instance of a classification result for ancient Ricci flows with boundary.
\end{abstract}

\maketitle

\tableofcontents

\section{Introduction}

Ancient flows are solutions of geometric flows defined from time minus infinity. They arise naturally as models for singularity formation and play a fundamental role in understanding the global structure of geometric flows. For two-dimensional Ricci flow, the classification theorem of Daskalopoulos, Hamilton, and Šešum in \cite{DHS12} shows that compact ancient solutions with positive curvature are remarkably rigid: up to the natural symmetries of the flow, they are given by the shrinking round sphere or the King–Rosenau solution.  In higher dimensional Ricci flow, the study and classification of ancient $\kappa$-solutions constitute a central component of the analysis of singularities and canonical neighborhoods (see for instance \cite{haslhofer2024kappa}, \cite{ABDS}, \cite{BDS2021uniqueness}, \cite{Bre-3d-noncpt}, \cite{lai2024family}, \cite{BDNS-compact}, \cite{BN-noncompact}, \cite{zhao20234}, \cite{ZZ-notype2}, \cite{ma2023unique}, \cite{hebbar2026asymptotic},\cite{hebbar2026kappa}, \cite{daskalopoulos2026unique}). On the other hand, for mean curvature flow, which shares many qualitative features with Ricci flow, the parallel classification program of ancient boundaryless mean curvature flow has been growing rapidly (see \cite{choi2026classification}, \cite{ChoiMantoulidis}, \cite{BC1}, \cite{ADS1}, \cite{ADS2}, \cite{CHH}, \cite{CHHW}, \cite{choi2025revisiting}, \cite{zhu2022so}, \cite{CHH_translator}, \cite{choi2025classification}, \cite{CH-classification-r4}, \cite{CDZ25}, \cite{CDZ26}, \cite{bamler2025pde}, \cite{bamler2025classification}). 

Recently, Bourni, Langford, Burns, Catron and Franz have pioneered the program of classifying ancient mean curvature flows with free boundary in \cite{BL23}, \cite{BL25},  \cite{bourni2025classification}, \cite{BF26}. However, the classification of ancient Ricci flows subject to geometric boundary conditions appears to be largely undeveloped even in dimension two. Indeed, the study of Ricci flow on manifolds with boundary  presents essential difficulties that have no direct analogue in the closed setting. On a manifold with boundary, one must impose boundary conditions that are simultaneously geometrically natural, compatible with the gauge fixing, and sufficient to produce a well-posed parabolic boundary value problem. Typical geometric conditions prescribe the conformal class of the induced boundary metric together with the mean curvature, impose umbilicity, or prescribe the geodesic curvature in dimension two (see \cite{Shen96}, \cite{Pul13}, \cite{brendle2002curvature}, \cite{cortissoz2019ricci}, \cite{Gia16a}, \cite{Gia16b}, \cite{Chow22}, \cite{li2025normalized} for details).

In this paper, we study the two-dimensional  nonnegatively curved ancient Ricci flow $(M^2, \partial M^2, g(t))$ on a compact surface with boundary, where the ancient Ricci flow has uniformly bounded diameter\footnote{By the  decreasing property of diameter along nonnegatively curved Ricci flow, uniformly bounded
diameter condition is equivalent to $\lim_{t\to -\infty}\textrm{diam}(M^2,  g(t))\leq D<+\infty$.} and boundary subject to a prescribed positive constant geodesic curvature boundary condition:
\begin{align}\label{2DRF}
\begin{cases}
\partial_t g = -2\textrm{Ric}(g) = -R_g\,g,
& \forall (x,t)\in M\times(-\infty,T),\\[0.3em]
g(0)=g_0,
& x\in M,\\[0.3em]
k_{\partial M}(t)\equiv k_{\partial M}(0)=k>0,
& \forall (x,t)\in \partial M\times(-\infty,T),\\[0.3em]
R_g\ge 0, \textrm{diam}(M^2,  g(t))\leq D<+\infty,
& \forall (x,t)\in M\times(-\infty,T).
\end{cases}
\end{align}
where $R_{g}$ is the scalar curvature and $k>0$ is a fixed constant.

By the Gauss-Bonnet theorem, the positive curvature condition $R_g\geq 0$ combined with the positive boundary geodesic curvature imply that the surface $M^2$ must be a closed two-dimensional disk $\old$. 
And by \cite[Theorem 1.1]{CoMu19}, an ancient solution Ricci flow $(\old, \partial \bD, g(t))$ of \eqref{2DRF} on a surface with boundary of positive Gaussian curvature and positive constant geodesic curvature boundary forms a half-spherical singularity in finite time, which allows us to reduce the flow equation \eqref{2DRF} into a boundary value problem of the conformal factor with respect to a background metric and coordinates given by either the standard round hemisphere or the standard flat metric. 
 
\subsection{Main results}

We state below our the main results on the existence and uniqueness of ancient solutions to the Ricci flow with boundary \eqref{2DRF}.

\begin{theorem}\label{main existence uniqueness theorem}
    Modulo time translation and time independent diffeomorphism, there is a unique positively curved ancient solution to the {Ricci flow with boundary} \eqref{2DRF} having uniformly bounded diameter. In particular, this unique ancient solution is a rotationally symmetric ancient Ricci flow with backward limit a radius $\frac{1}{k}$-flat disk and forward limit a half-spherical singularity.
\end{theorem}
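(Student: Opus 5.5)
We prove existence by an explicit construction and uniqueness by reduction to the closed case via doubling. For a surface with constant geodesic curvature boundary, the natural candidate comes from a rotationally symmetric closed solution. Take the King--Rosenau (sausage) type solution, or more simply the symmetric picture. Namely, consider a rotationally symmetric ancient flow on $S^2$ that is symmetric under reflection across the equator, and cut along a curve of constant geodesic curvature. However, constant geodesic curvature $k$ is not preserved by cutting along a fixed curve of a general flow. So we instead work directly with the conformal factor. We write $g(t)=e^{2u}g_{\fl}$ on the closed unit disk $\old$ (after scaling so the backward limit has radius $\frac1k$). The equation becomes $\partial_t e^{2u} = \Delta u$, i.e.\ $\partial_t (e^{2u}) = \Delta_{\fl} u$. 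The boundary condition $k_{\partial M}=k$ becomes the Neumann-type condition $\partial_r u + 1 = k e^{u}$ on $\partial\bD$.

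For existence, we seek a radial solution $u(r,t)$. Convergence to the flat disk as $t\to-\infty$ means $u\to \log\frac1k$. We construct the flow as a limit of solutions $u_n$ starting at times $-n$ from flat data $u_n(\cdot,-n)\equiv\log\frac1k$ perturbed slightly (e.g.\ by a small multiple of the first Neumann-type eigenmode, which is radial). We normalize time by the singular time $T_n$ provided by \cite[Theorem 1.1]{CoMu19}, or equivalently by fixing the area at time $0$. Uniform estimates come from the following facts:
\begin{itemize}
\item the maximum principle for $R$ with the boundary condition (which gives a boundary derivative identity for $R$ of the form $\partial_\nu R = kR$);
\item the Gauss--Bonnet area formula $\frac{d}{dt}\mathrm{Area} = -\int R\,dA = -(4\pi - 2k L)$;
\item the diameter bound.
\end{itemize}
Together these allow passage to a subsequential limit that is ancient, radial, positively curved, and nonflat.

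For uniqueness, the key steps are as follows.

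\textbf{(i) Backward limit.} Using the diameter bound, $R\ge0$, and monotonicity of area, show that as $t\to-\infty$ the flow converges smoothly to a flat disk with geodesic boundary curvature $k$. This limit must be the radius-$\frac1k$ flat disk. Here Hamilton's Harnack inequality, adapted to the boundary condition, should give $\partial_t R\ge0$, hence $R\to0$ backward. Bounded diameter then prevents degeneration.

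\textbf{(ii) Rotational symmetry.} Prove that any such solution is rotationally symmetric. We plan to double the surface across $\partial\bD$ in conformal coordinates. This does not give a smooth metric, because $\partial_r u \ne 0$ at the boundary. So instead we use the reflection (Kelvin inversion) $z\mapsto 1/\bar z$ with a correction term that converts the Robin condition into a transmission problem. Alternatively, we run an Alexandrov moving-plane/maximum principle argument on the pressure-type quantity $v=e^{-2u}$. Its equation $v_t = v\Delta v - |\nabla v|^2$ is the 2D fast-diffusion/pressure equation used in \cite{DHS12}. In this argument we compare $u$ with its rotations and exploit the fact that the backward limit is rotationally symmetric, together with linear analysis of the decay rates.

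\textbf{(iii) Radial uniqueness.} Among radial solutions, show that there is a one-parameter family differing only by time translation. We linearize around the flat disk. The operator $\Delta$ with the linearized Robin condition $\partial_r w = w$ (in suitable normalization) has a unique unstable radial mode, and the solution is determined by its leading coefficient $c e^{\lambda t}$. Time translation changes $c$ by a factor $e^{\lambda \tau}$, which gives uniqueness modulo translation. This uses a contraction/ODE-type argument in the style of \cite{DHS12}, showing that two solutions with the same leading asymptotics coincide.

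I expect step (ii), ruling out nonradial solutions, to be the main obstacle. The modes that are unstable at $t=-\infty$ must be shown to be absent, or to be realized only by conformal (Möbius) automorphisms of the disk. In particular, the $n=1$ modes correspond to disk automorphisms that do not preserve the boundary condition, and these must be excluded. The first thing I would try is to show, via the Harnack inequality and the Kazdan--Warner-type identity obtained by integrating against the conformal Killing fields of $\bD$ with boundary terms, that these modes vanish.
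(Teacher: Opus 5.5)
There is a genuine gap in step (ii), and the mechanism you propose for the $n=1$ modes rests on a false premise. Your overall architecture does match the paper's: old solutions for existence, then a flat backward limit, rotational symmetry, and radial uniqueness via the single unstable radial mode plus time translation.

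\textbf{The $n=1$ modes are gauge, not something to exclude.} They are neutral (eigenvalue $0$), and M\"obius automorphisms $F$ of $\bD$ \emph{do} preserve the boundary condition. Geodesic curvature is intrinsic, so $F^*g(t)$ is again a solution with boundary curvature $k$. In your variables, $\tilde u=u\circ F+\log|F'|$ again solves $\partial_t e^{2\tilde u}=2\Delta\tilde u$ (note the factor $2$) with $1+\partial_r\tilde u=ke^{\tilde u}$. So $\log\frac1k+\log|F_a'|$ is a two-parameter family of flat stationary solutions. Its tangent space at $a=0$ is exactly the null space $\mathrm{span}\{x_1,x_2\}$ of $\Delta$ with $\partial_r w=w$. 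Since every solution comes with its M\"obius pullbacks, no diffeomorphism-invariant identity, Kazdan--Warner or otherwise, can rule these directions out. This is why the theorem is stated modulo time-independent diffeomorphisms.

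What step (ii) actually needs is three things:
\begin{enumerate}
\item Fix the gauge by a M\"obius map so that the backward limit is the centered disk (Lemma \ref{lem_scalar_reduction}).
\item Prove $v\to0$ \emph{exponentially} in $C^2$ despite the two-dimensional kernel. The paper gets this from a \L{}ojasiewicz--Simon inequality with exponent $\frac12$, valid because $\mathscr{E}$ is Morse--Bott with critical manifold $\{q_a\}$ (Lemma \ref{lem_decay_estimate}).
\item Reflect across diameters. The function $\phi=(v-v_R)/x_1$ solves a Neumann parabolic problem whose zeroth-order coefficient is $O(\|v\|_{C^2})$ (Lemma \ref{lem_neumann}). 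The maximum principle, integrability of $\|v\|_{C^2}$ near $-\infty$, and $\phi(\cdot,s)\to0$ as $s\to-\infty$ then force $\phi\equiv0$.
\end{enumerate}
Your sketch lists candidate techniques (Kelvin inversion, moving planes) but contains none of these ingredients.

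\textbf{The other steps are thinner than they look.} In (i) you rely on a Harnack inequality $\partial_tR\ge0$ compatible with $\partial_\nu R=kR$, which is not available. Even granting it, monotonicity only yields a limit $R_{-\infty}\ge0$, not $R_{-\infty}=0$. The paper avoids Harnack entirely:
\begin{itemize}
\item the diameter bound gives $A(t)\le D\,L(t)\le 2\pi D/k$, by Jacobi field comparison along inward normal geodesics;
\item differentiating Gauss--Bonnet gives $Q'=k\int_{\partial\bD}R\,ds\ge0$ for $Q=\int R\,dA$, so $\int_{-\infty}^tQ<\infty$ forces $Q\to0$;
\item point-picking upgrades this to $R_{\max}\to0$ (Lemma \ref{pointwisedecay}).
\end{itemize}

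In the existence step, your three listed facts do not give a lower bound on the conformal factor at a fixed time that is uniform in $n$. The identity $A'=-4\pi+2kL$ bounds $A(t_0)$ from below only if $L^2\lesssim A$. The paper gets that from monotonicity of $L^2/A$ (Lemma \ref{isop}, which needs preservation of $R_\psi\ge0$). It then converts the area bound into a pointwise one using the radial pinching $u(0,t)\le u(\psi,t)\le 4u(0,t)$ that follows from $R\ge0$. You also need the spherical-cap barriers (Lemma \ref{maxtime}) to show the existence times of the old solutions tend to infinity.

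Finally, the ``contraction'' in (iii) should be replaced by what actually closes the argument. That is the comparison principle of Lemma \ref{maxtime}: over the hemisphere, $1/\sqrt u$ has normal derivative $-k$, so differences of solutions have Neumann data. Apply it together with the sharp asymptotics $v\,e^{-k^2\mu_0t}\to Af_0$, $A>0$, and time shifts on either side.
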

By a standard maximum principle argument, it is not hard to see that the ancient solution $(\old, \partial \bD, g(t))$ of \eqref{2DRF}  either is isometric to the static flat disk with radius $k^{-1}$ or has positive Gaussian curvature everywhere. Hence, we have the following immediate consequence of Theorem \ref{main existence uniqueness theorem}.
\begin{corollary}
    Any two-dimensional nonnegatively curved ancient Ricci flow $(M^2, \partial M^2, g(t))$ on a surface with boundary satisfying \eqref{2DRF} and having uniformly bounded diameter must be, up to time translation and time independent diffeomorphism, either the flat disk with radius $\frac{1}{k}$ or the unique rotationally symmetric positively curved ancient Ricci flow with constant boundary geodesic curvature $k$  constructed in Theorem \ref{main existence uniqueness theorem}.
\end{corollary}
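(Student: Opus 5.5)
The corollary reduces to Theorem \ref{main existence uniqueness theorem} once we establish the dichotomy stated just before it. Either $R_g\equiv 0$ on the whole flow, or $R_g>0$ everywhere at all times. The plan is to prove this dichotomy, identify the flat case, and cite the theorem in the positive case.

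\textbf{Topology.} Integrate Gauss--Bonnet at any time $t$:
\[
\int_M \tfrac12 R_{g(t)}\,dA + \int_{\partial M} k\, ds = 2\pi\chi(M).
\]
With $R\ge0$ and $k>0$, the left side is positive, so $\chi(M)>0$. A compact connected surface with nonempty boundary and $\chi>0$ is the closed disk $\old$.

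\textbf{Strong maximum principle.} Under \eqref{2DRF} the scalar curvature satisfies $\partial_t R=\Delta R+R^2$. For the constant geodesic curvature condition, differentiating $k$ along the flow gives a Neumann-type condition of the form $\partial_\nu R = kR$. This is the standard identity for this boundary problem (cf.\ \cite{CoMu19}, \cite{brendle2002curvature}). Suppose $R(x_0,t_0)=0$ at some point. If $x_0$ is interior, the strong maximum principle applied to the nonnegative supersolution $R$ of $\partial_t R-\Delta R\ge0$ gives $R\equiv0$ on $M\times(-\infty,t_0]$. If $x_0\in\partial M$, the boundary condition forces $\partial_\nu R(x_0,t_0)=0$, which contradicts the Hopf lemma unless $R\equiv 0$. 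So in either case $R\equiv0$ for all $t\le t_0$.

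\textbf{The flat case.} If $R\equiv0$ for $t\le t_0$, then $\partial_t g=0$, so the metric is static and flat on $(-\infty,t_0]$. Uniqueness of the forward boundary problem (again via \cite{CoMu19}) extends this to all times. A flat disk with constant boundary geodesic curvature $k$ is isometric to the Euclidean disk of radius $k^{-1}$: the developing map is an immersion into $\bR^2$, the boundary is a closed curve of constant curvature $k$, hence a circle of radius $1/k$ traversed once by Gauss--Bonnet, and the map is therefore an isometry onto that disk.

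\textbf{The positive case.} Otherwise $R>0$ everywhere, and Theorem \ref{main existence uniqueness theorem} gives the unique rotationally symmetric solution, up to time translation and time-independent diffeomorphism.

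\textbf{Main obstacle.} The only delicate step is deriving the boundary condition $\partial_\nu R=kR$ and applying the Hopf lemma at the boundary. If that boundary identity takes a different form, the fallback is to argue directly that $R\equiv 0$ propagates. Since $R$ vanishes at an interior point near $x_0$ only if it vanishes identically, one can instead pass to the interior and use continuity.
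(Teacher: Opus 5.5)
Your proposal is correct and follows the paper's argument. The paper gets the corollary from the same dichotomy, the static flat disk of radius $k^{-1}$ versus $R>0$ everywhere, via the maximum principle for $\partial_t R=\Delta R+R^2$ with the boundary condition $\partial_\nu R=kR$ recorded in Section~\ref{setup} (and Gauss--Bonnet forcing $M^2=\old$), and then invokes Theorem~\ref{main existence uniqueness theorem}. Since your boundary identity is exactly the one the paper uses, your Hopf-lemma route is enough. The continuity fallback in your last paragraph would not work on its own: vanishing at a boundary point does not force vanishing at nearby interior points. It is not needed, though.
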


Theorem \ref{main existence uniqueness theorem} is a direct consequence of Proposition \ref{existence} (existence) and Proposition \ref{uniqueness of 1.1} (uniqueness) in this paper.

Our proof strategy is inspired in part by the strategy in the works of Bourni--Langford \cite{BL23, BL25} about the classification of convex ancient free-boundary mean curvature flows in the ball. However, for ancient Ricci flows with boundary, there several difficulties to be overcome which have no direct counterpart in the free-boundary mean ancient curvature flow setting. 

First, there is no ambient geometry in the Ricci-flow setting, and hence no natural height function or other extrinsic geometric quantities; nor is there any explicit family of geometric barriers analogous to those available for free-boundary mean curvature flow. We must instead establish a boundary-compatible nonlinear comparison principle, intrinsically construct suitable barriers and approximate old solutions with desired properties, and also identify an appropriate intrinsic analogue of the height function which is essential for establishing both existence and uniqueness. In addition, when establishing rotational symmetry, the Alexandrov reflection argument used in the extrinsic setting does not directly apply to our setting.

Second, to establish uniqueness in our intrinsic setting, there is neither a nested family of ambient regions nor a direct geometric mechanism selecting the backward limiting metric, and we have to rule out curvature concentration or degeneration near the boundary. 

Third, conformal gauge freedom presents a further difficulty in our setting. A priori, convergence to the flat disk does not select a preferred conformal parametrization, and without accounting for this equivalence there is in fact a two-dimensional family of stationary solutions which may arise as backward limits. These must be interpreted appropriately in terms of the spectral information of the linearized equation in order to show the uniqueness modulo time-independent diffeomorphisms of our backward limit.

\subsection{Outline of the paper}

In Section \ref{setup}, we set up the basic notations and evolution equations associated with two-dimensional Ricci flow with boundary. In Section  \ref{barrier3}, we construct some special barriers and discuss the necessary comparison principles for various geometric quantities. In Section  \ref{robinbcspectrum}, we recall some spectral properties of the linear critical Robin heat equation. In Section \ref{construction of arf}, we construct suitable old solutions and use them to establish the existence of the ancient Ricci flow (Theorem \ref{main existence uniqueness theorem}). In  Section  \ref{flatbackwardlimit}, we show that the backward limits of the ancient solutions are the standard flat disk up to time independent diffeomorphisms. In Section  \ref{symmetrysection}, we show the rotational symmetry of ancient solutions converging to a flat disk. Finally, in Section \ref{uniquenesssection}, we prove the uniqueness part of Theorem \ref{main existence uniqueness theorem} for our constructed ancient solution using its asymptotics.

\subsection{Acknowledgments} The first author was supported by the National Research Foundation of Korea (NRF) grant funded by the Korean government (MSIT) RS-2024-00346651 and the Korea Advanced Institute of Science \& Technology. The second author was supported by a Simons Travel Grant and the University of Illinois Urbana-Champaign.  The third author was supported by Hunan University.  

\subsection{AI use disclosure}
Apart from literature searches and minor proofreading tasks, the authors acknowledge the use of AI models exclusively in the following: to find the test function $S(\psi, t)$  \eqref{Stestf}  in the proof  of Lemma \ref{R uperbound estimates}, to speed up the computation of the evolution equation of $E(\psi, t)$ in  Lemma \ref{evolution of E}, and to derive the decay estimates in Lemma \ref{lem_decay_estimate}. All writing in this paper was done by the authors, who take responsibility for its correctness.

\section{Preliminaries}\label{setup}
In this section, we discuss some basic features of the scalar evolution equations associated with a two-dimensional Ricci flow with boundary, and further properties in the special case of rotational symmetry. We will need these later for our constructions and quantitative analyses. 

Let $g_{0}$ denote the standard unit half-spherical metric on the closed two-dimensional disk $\old$, with totally geodesic boundary $g_{0}|_{\partial \bD}$. Denote by $\nabla^0$ the induced Levi-Civita connection, and by $\Delta_0$ the Laplacian on the hemisphere $(\overline{\mathbb{S}^2_+},g_0)$ induced from the usual Laplacian on 2-sphere $\Delta_{\mathbb{S}^2}$. 
Since the two-dimensional Ricci flow preserves the conformal class, we can parametrize the Ricci flow by the limiting half-sphere at the singular time $T$; that is, we can write
$$g(\psi, \theta,t) = u(\psi, \theta, t)g_{0},$$
where $\psi\in [0, \frac{\pi}{2}]$ measures the colatitude and $\theta\in [0, 2\pi]$ measures the longitude.

We can then rewrite (\ref{2DRF}) in terms of conformal factor $u(\cdot,t)$ as follows:
\begin{align}\label{2Du}
    \begin{cases}
        u_t = -R(u)\cdot u = \Delta_0\log u - 2< 0\qquad &\text{in }\old\times (-\infty,T),\\
        \frac{\partial u}{\partial \nu_0} = 2ku^{3/2}\qquad  &\text{on }\partial \bD \times (-\infty,T). 
    \end{cases}
\end{align}
where $\nu_0$ denotes the unit outward normal vector at the boundary with respect to $g_{0}$. This is a nonlinear Robin boundary problem for the conformal factor $u := u(p,t)$. 
A stationary solution to the above equation (and therefore ancient) is the radius $\frac{1}{k}$-flat disk metric $g_{\text{flat},k}(t) \equiv g_{\text{flat}}$, which in the coordinates $(\psi,\theta)$ can be written as
\begin{equation}\label{eq_flat}
g_{\text{flat},k}(\psi,\theta)  = u_{\text{flat},k}(\psi)g_0 :=  \frac{1}{k^2(1+\cos\psi)^2}(d\psi^2 + \sin^2\psi d\theta^2).
\end{equation}
For convenience of notation we will often suppress the constant $k$ in writing $g_{\fl}$ and $u_{\fl}$.

For the scalar curvature $R = R(p,t)$, the time independent diffeomorphism formulas for the time-dependent Laplacian and gradient 
\begin{equation}\label{time dependent lg}
    \Delta = \Delta_{u(t)g_0}=u(t)^{-1}\Delta_0,\qquad \nabla = \nabla_{u(t)g_0}= u(t)^{-1}\nabla^0,
\end{equation}
together with \eqref{2Du} yield the following equations:
    \begin{equation}
        \begin{cases}
                R(u) = \frac{1}{u}(2-\Delta_{0} \log u)>0\\
                \partial_tR = \Delta_{g(t)} R + R^2 = u(t)^{-1}\Delta_0R + R^2.\\
                \frac{\partial R}{\partial \nu_{g(t)}} = \frac{1}{\sqrt{u(t)}}\frac{\partial R}{\partial\nu_0} = kR.
            \end{cases}
    \end{equation}
In our later construction of the nonstatic rotationally symmetric ancient solution to (\ref{2DRF}), we will consider the  rotationally symmetric conformal factor function $u = u(\psi)$ in $(\mathbb{S}^2_+,g_0,\nabla^0)$, for which we have the following explicit formulas:
$$|\nabla^0 u|_{g_0}^2 = \parent{\frac{\partial u}{\partial \psi}}^2,\qquad \Delta_0 u = u_{\psi\psi} + \cot\psi\;u_\psi.$$
Therefore, a rotationally symmetric solution $u(\psi,\theta,t) = u(\psi,t)$ to (\ref{2Du})  solves the following boundary value problem:
\begin{align}\label{2Dsym}
        \begin{cases}
            u_t = \Delta_{0}\log u - 2 = (\log u)_{\psi\psi} + \cot\psi \;(\log u)_{\psi} - 2\; &\text{for all } \psi\in [0,\pi/2),\\
            \frac{\partial u}{\partial \nu_0}  = u_\psi =2ku^{3/2}\; &\text{for }\psi = \pi/2. 
        \end{cases}
    \end{align}

\section{Comparison principles and barriers}\label{barrier3}
In this section, we establish comparison principles for various geometric quantities and construct special barriers.
\subsection{Locating extrema}
We first present two comparison principles between a subsolution and a solution to the rotationally symmetric Ricci flow with boundary equation (\ref{2Dsym}). These will show that two important properties of our initial data---the signs of the gradients $u_\psi,\; R_\psi\geq 0$---are preserved along the flow. Using these comparison principles, we can keep track of the precise location of points achieving the extremal values of $u(t)$ and $R(t)$.

\begin{lemma}[Location of conformal factor extremum]\label{confloc}
    Let $g(\psi,\theta,t) = u(\psi,t)g_{0}$, $0\leq t< T$ be a radially symmetric {Ricci flow with boundary} (i.e. not dependent on $\theta$) with the initial data $u(\psi,0) = u_0(\psi)$ satisfying $\partial_\psi u_0 \geq 0$. Then for $0\leq t<T$ and all $\psi\in[0,\frac{\pi}{2}]$ we have
    $$\partial_\psi u(\psi,t)\geq0.$$
    Consequently, the conformal factor $u(\psi,t)$ always achieves its maximum value on the boundary $\{\psi = \pi/2\}$ and its minimum value at the pole $\{\psi=0\}$, whenever the initial data exhibits the same behavior. Furthermore, if the inequality is strict away from the pole, it remains so throughout the evolution.
\end{lemma}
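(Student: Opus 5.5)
We will apply the maximum principle to $w := \partial_\psi \log u = u_\psi/u$, which has the same sign as $u_\psi$, and show that it stays nonnegative on $[0,\pi/2]\times[0,T)$.

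\textbf{Evolution equation for $w$.} Set $v=\log u$. Then $v_t = u_t/u = e^{-v}(v_{\psi\psi}+\cot\psi\, v_\psi) - 2e^{-v}$. Differentiating in $\psi$, with $w=v_\psi$, gives
\begin{align*}
w_t = e^{-v}\bigl(w_{\psi\psi} + \cot\psi\, w_\psi - \csc^2\psi\, w\bigr) - w\, e^{-v}\bigl(w_\psi+\cot\psi\, w - 2\bigr).
\end{align*}
This is a linear parabolic equation for $w$ with bounded coefficients on compact subsets of $(0,\pi/2]\times[0,T)$, of the form
\[
w_t = a\,w_{\psi\psi} + b\,w_\psi + c\,w .
\]
The zeroth-order coefficient $c$ is singular at the pole; it contains the term $-e^{-v}\csc^2\psi$, which has the favorable sign.

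\textbf{Boundary conditions.} At the boundary $\psi=\pi/2$, the Robin condition gives $w = u_\psi/u = 2k u^{1/2}>0$. At the pole, smoothness of the rotationally symmetric metric forces $u_\psi(0,t)=0$, so $w(0,t)=0$. Hence $w\ge 0$ on the parabolic boundary: at $t=0$ by hypothesis, at $\psi=\pi/2$ strictly, and at $\psi=0$ with equality.

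\textbf{Maximum principle.} Consider $\tilde w = e^{-Ct}w$ with $C$ larger than $\sup c$ on $[0,\pi/2]\times[0,T']$ for any $T'<T$. Such a bound exists because $-\csc^2\psi$ only helps, while the remaining terms are bounded by smoothness of $u$ on compact time intervals. Suppose $\tilde w$ attains a negative minimum. That point must be interior, since $\tilde w\ge0$ on the parabolic boundary. There $\tilde w_t\le 0$, $\tilde w_\psi=0$, and $\tilde w_{\psi\psi}\ge 0$. This contradicts the equation, since $(c-C)\tilde w>0$ there. Hence $w\ge0$, that is, $u_\psi\ge0$.

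The monotonicity immediately locates the maximum of $u(\cdot,t)$ at $\psi=\pi/2$ and the minimum at $\psi=0$.

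\textbf{Strictness.} Suppose $u_\psi(\cdot,0)>0$ on $(0,\pi/2]$. Apply the strong maximum principle to $w\ge0$ on $(0,\pi/2)$. If $w(\psi_0,t_0)=0$ at some interior point, then $w\equiv 0$ on $(0,\pi/2)\times[0,t_0]$. This contradicts the boundary value $w=2ku^{1/2}>0$ at $\psi=\pi/2$ by continuity. Hence $u_\psi>0$ away from the pole for all $t$.

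\textbf{Main obstacle.} The delicate point is the coordinate singularity at the pole. Working with $w$ in the $\psi$ variable introduces the singular coefficients $\cot\psi$ and $\csc^2\psi$. I handle this as above, using the sign of the $\csc^2$ term and the vanishing $w(0,t)=0$. Should regularity of $w$ near $\psi=0$ be an issue, I would instead apply the maximum principle on $[\delta,\pi/2]$ and use $w(\delta,t)=O(\delta)$ uniformly as $\delta\to0$, which follows from $u$ being smooth and even in $\psi$.
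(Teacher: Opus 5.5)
Your proof is correct, but it takes a different route from the paper's.

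\textbf{The paper's route.} The paper does not run the maximum principle on $w=u_\psi/u$ directly. It divides once more by $\sin\psi$ and works with $q=u_\psi/(u\sin\psi)$. This function is smooth up to the pole, and the principal part of its equation is $q_{\psi\psi}+3\cot\psi\,q_\psi=\Delta_{\overline{\mathbb{S}^4_+}}q$. Viewed as a radial function on the $4$-dimensional hemisphere, $q$ solves a uniformly parabolic equation with no singular coefficients. The pole becomes an interior point, and the only boundary datum is the Dirichlet value $q(\pi/2,t)=2ku^{1/2}>0$. The strictness statement is then quoted from Sturm (zero-number) theory.

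\textbf{Your route.} You keep the singular one-dimensional equation and treat $\psi=0$ as a second Dirichlet boundary point, using $w(0,t)=0$. A negative minimum can then only occur at some $\psi_0\in(0,\pi/2)$, where all coefficients are finite, so the singularity never enters. The only global input is $\sup c<\infty$, which you justify correctly: $-e^{-v}\csc^2\psi$ has a favorable sign, and $w\cot\psi$ is bounded because $u$ is even in $\psi$.

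\textbf{Strictness.} Your strong-maximum-principle argument on $(0,\pi/2)$ is more elementary than invoking Sturm theory. It actually gives more than the statement asks: it shows $u_\psi>0$ on $(0,\pi/2]$ for every $t>0$ even without strict initial data, because positivity at $\psi=\pi/2$ propagates inward.

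\textbf{What each approach buys.} The paper's desingularization is a reusable device. It is applied again to $R_\psi/\sin\psi$ in Lemma \ref{curvloc} and in the ancient setting of Lemma \ref{anysolngrad}. A strong maximum principle for $q$ would also give $q>0$ at the pole, i.e.\ $u_{\psi\psi}(0,t)>0$. Your formulation cannot detect that, though the statement does not require it.
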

\begin{proof}
    Define $w(\psi,t) = \log u$. Then $w$ satisfies the equation
    $$\begin{cases}
        w_t = e^{-w}(w_{\psi\psi} + \cot\psi \;w_\psi - 2),\\
        \partial_\nu w = 2ke^{w/2}>0.
    \end{cases}$$
    Differentiating in $\psi$, we see that $v := w_\psi = (\log u)_\psi$ satisfies the Dirichlet boundary value problem
    $$\begin{cases}
        v_t = e^{-w}\parent{v_{\psi\psi} + (-v + \cot\psi)v_\psi + (2-\cot\psi\; v - \csc^2\psi)v },\\
        v(\pi/2,t) = 2ke^{w/2}>0.
    \end{cases}$$
    Notice that the operator on $v$ is singular at $\psi =0$. To remedy this issue, we introduce the new function
    $$q(\psi,t) := \frac{v(\psi,t)}{\sin\psi} = \frac{u_\psi}{u\sin\psi},$$
    which is well-defined and smooth due to the rotational symmetry of $g(t)$.
    Then the function $q$ satisfies the Dirichlet boundary problem
    $$\begin{cases}
        q_t = e^{-w}\parent{q_{\psi\psi} + 3\cot\psi~q_\psi - \sin\psi~qq_\psi - 2\cos\psi~q^2},\\
        q(\pi/2,t) = v(\pi/2,t) >0.
    \end{cases}$$
    If we now view $q$ as a rotationally symmetric function on $\overline{\mathbb{S}^4_+}$, then we can observe that $q_{\psi\psi} + 3\cot\psi~q_\psi = \Delta_{\overline{\mathbb{S}^4_+}}q$, where $\Delta_{\overline{\mathbb{S}^4_+}}$ denotes the Laplacian on the hemisphere $\overline{\mathbb{S}^4_+}$. Therefore, $q$ satisfies a uniformly parabolic equation with a Dirichlet boundary condition on $\overline{\mathbb{S}^4_+}$. Then we can apply the parabolic maximum principle to see that $q(\cdot,t) \geq0$ persists throughout the evolution, which yields $\partial_\psi u(\psi,t) \geq 0$ for all $0\leq t<T$ (at the pole, $u_\psi =0$ by smoothness). Last statement of the lemma follows from Sturm's theory \cite{Angenent1988}.
\end{proof}
Applying the arguments in the proof of {Lemma \ref{confloc}} almost verbatim, we also obtain the following comparison result:
\begin{lemma}[Location of curvature extremum]\label{curvloc}
         Let $g(\psi,\theta,t) = u(\psi,t)g_{S^2}$, $0\leq t< T$ be a radially symmetric {Ricci flow with boundary} (i.e. not dependent on $\theta$) with the initial data $R(\psi,0) = R_0(\psi)$ satisfying $\partial_\psi R_0 \geq 0$. Then for $0\leq t<T$ and all $\psi\in[0,\frac{\pi}{2}]$ we have
    $$\partial_\psi R(\psi,t)\geq0.$$
    Consequently, the scalar curvature $R(\psi,t)$ of $g(\psi,t)$ always achieves its maximumum value on the boundary $\{\psi = \pi/2\}$ and its minimum value at the pole $\{\psi=0\}$, whenever the initial data exhibits the same behavior. Furthermore, if the inequality is strict away from the pole, it remains so throughout the evolution.
    \end{lemma}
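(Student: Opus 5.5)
We follow the proof of Lemma \ref{confloc}, now working with $R$ instead of $\log u$. In the rotationally symmetric setting $R=R(\psi,t)$ satisfies
\[
R_t = u^{-1}\parent{R_{\psi\psi}+\cot\psi\,R_\psi} + R^2,
\]
together with the boundary condition $u^{-1/2}R_\psi = kR$ at $\psi=\pi/2$. Differentiating in $\psi$, the function $v:=R_\psi$ satisfies
\[
v_t = u^{-1}\parent{v_{\psi\psi}+\cot\psi\,v_\psi - \csc^2\psi\, v} - \frac{u_\psi}{u^2}\parent{R_{\psi\psi}+\cot\psi\,R_\psi} + 2Rv .
\]
The term involving $u_\psi$ can be rewritten using the $R$ equation: $R_{\psi\psi}+\cot\psi R_\psi = u(R_t-R^2)$. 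This is not linear in $v$, so instead we write $R_{\psi\psi} = v_\psi$ and $\cot\psi\,R_\psi=\cot\psi\,v$. Then the term becomes $-\frac{u_\psi}{u^2}(v_\psi+\cot\psi\,v)$, which is linear in $(v,v_\psi)$ with smooth coefficients away from the pole. The boundary condition gives the Dirichlet data
\[
v(\pi/2,t) = k\sqrt{u}\,R(\pi/2,t),
\]
which is positive as long as $R>0$. Positivity of $R$ is preserved by the standard maximum principle: on the boundary the Robin condition $\partial_\nu R = kR$ with $k>0$ prevents a new zero minimum, and in the interior $R_t = \Delta R + R^2$.

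To remove the singularity at the pole, set $q := R_\psi/\sin\psi$, which is smooth by rotational symmetry since $R_\psi$ is odd in $\psi$. Computing as before, $q$ satisfies
\[
q_t = u^{-1}\parent{q_{\psi\psi}+3\cot\psi\,q_\psi} + b\,q_\psi + c\,q ,
\]
with smooth bounded coefficients $b,c$ built from $u,u_\psi/\sin\psi,R$ on compact time intervals. The operator $q_{\psi\psi}+3\cot\psi\, q_\psi$ is the Laplacian on $\overline{\mathbb{S}^4_+}$ acting on rotationally symmetric functions. The equation is therefore uniformly parabolic on $\overline{\mathbb{S}^4_+}\times[0,T']$ for each $T'<T$, with Dirichlet data $q(\pi/2,t)>0$ and initial data $q(\cdot,0)\ge 0$. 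The linear parabolic maximum principle, applied to $e^{-Ct}q$ to absorb the zeroth-order term, gives $q\ge 0$, hence $R_\psi\ge 0$. The location of the extrema follows immediately from this monotonicity in $\psi$. Strictness away from the pole is preserved by the strong maximum principle, or equivalently by Sturm's zero-counting theory \cite{Angenent1988} as in Lemma \ref{confloc}.

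The main obstacle is checking that the coefficient of $q$ arising from $\csc^2\psi$, the $\cot\psi$ terms and the $u_\psi$ cross term stays bounded at the pole. For this we use that $u_\psi/\sin\psi$ is smooth, by the same parity argument, and that the $-\csc^2\psi$ term combines exactly with the $\cot\psi$ derivatives to produce the $3\cot\psi$ of the four-dimensional Laplacian, as in Lemma \ref{confloc}. A second point is the sign of the boundary data, which relies on the preserved positivity of $R$.
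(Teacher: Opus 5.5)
Your proposal is correct and follows the paper's proof. You pass to $q=R_\psi/\sin\psi$, view it as a radial function on $\overline{\mathbb{S}^4_+}$ with nonnegative initial data and positive Dirichlet data $q(\pi/2,t)=k\sqrt{u}\,R(\pi/2,t)$, apply the maximum principle, and get strictness from Sturm theory (or, as you note, the strong maximum principle). The details you add are ones the paper leaves implicit: preservation of $R>0$ for the sign of the boundary data, and boundedness of the coefficients at the pole via the smoothness of $u_\psi/\sin\psi$. Your coefficient $-u_\psi/u^2$ in front of $(q_\psi+2q\cot\psi)$ is the correct one; the paper's factor $2$ there looks like a slip and does not affect the argument.
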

    \begin{proof}
    If we set $q := \frac{R_\psi}{\sin\psi}$, then $q$ satisfies the {boundary value problem}
    $$\begin{cases}
        q_t = u^{-1}\parent{q_{\psi\psi} + 3\cot\psi q_\psi - 2q} - 2\frac{u_\psi}{u^2}(q_\psi + 2q\cot\psi) + 2Rq,\\
        q(\pi/2,t) = R_\psi(\pi/2,t) >0.
    \end{cases}$$
    As long as $R$ exists and stays bounded, we can use the maximum principle as in the proof of Lemma \ref{confloc} to prove that $\min q$ remains non-negative, which implies that $R_\psi$ stays nonnegative. The last statement of the lemma follows from Sturm's theory \cite{Angenent1988}.
    \end{proof}
The above scalar curvature comparison lemma immediately yields the following interesting geometric consequence:
\begin{lemma}[Monotonicity of the isoperimetric ratio]\label{isop}
        Given a metric $(\old,\partial\bD ,g)$, define the isoperimetric ratio
    $$I(g) := \frac{L(\partial \bD,g)^2}{A(\old,g)}.$$
    Let $\{g(t) = u(\psi,t)g_0\}_{t\in [t_0,t_1]}$ be a positively curved rotationally symmetric solution to (\ref{2DRF}) with the initial condition $\partial_\psi u(\cdot,t_0),\; \partial_\psi R(\cdot,t_0) \geq 0$. Then we have
    $$\frac{d}{dt}I(g(t)) \leq 0,\; \forall t\in (t_0,t_1).$$
    \end{lemma}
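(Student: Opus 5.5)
We will compute $\frac{d}{dt}\log I = 2\frac{L'}{L} - \frac{A'}{A}$ directly from the flow and then show it is nonpositive using Gauss--Bonnet together with the monotonicity of $R$ from Lemma \ref{curvloc}.

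\textbf{Step 1: evolution of area and length.} Under $\partial_t g = -Rg$ the area form evolves by $-R\,dA$, so
$$A' = -\int_{\bD} R\,dA.$$
The boundary length element evolves by $-\tfrac{R}{2}\,ds$, so
$$L' = -\tfrac12\int_{\partial\bD} R\,ds.$$
Gauss--Bonnet with constant geodesic curvature $k$ gives $\tfrac12\int R\,dA + kL = 2\pi$, hence $A' = -2(2\pi - kL)$. In the rotationally symmetric setting $R$ is constant on $\partial\bD$, say $R_b(t)$, so $L' = -\tfrac12 R_b L$. Therefore
$$\frac{d}{dt}\log I = -R_b + \frac{\int R\,dA}{A}.$$

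\textbf{Step 2: sign via monotonicity of $R$.} By Lemma \ref{curvloc}, the hypothesis $\partial_\psi R(\cdot,t_0)\geq 0$ propagates, so $R(\cdot,t)$ attains its maximum on the boundary for every $t$. Hence $R_b = \max R \geq \frac{1}{A}\int R\,dA$, the average of $R$, and so $\frac{d}{dt}\log I\leq 0$. Since $I>0$, this gives $\frac{d}{dt}I\leq 0$.

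The hypothesis $\partial_\psi u\ge 0$ is not actually used in this argument; Lemma \ref{confloc} is perhaps only included for context. \textbf{Main obstacle.} The only delicate point is justifying Lemma \ref{curvloc} on the whole interval, which requires $R$ to stay bounded up to $t_1$. This holds because the flow is smooth on the compact interval $[t_0,t_1]$. Beyond that, the argument is simply the two variation formulas together with the observation that the maximum of $R$ dominates its average.
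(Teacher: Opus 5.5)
Your proposal is correct and is essentially the paper's argument. You differentiate $L$ and $A$ under the flow to get $\frac{d}{dt}\log I = \bar R_{\bD} - \bar R_{\partial\bD}$, then invoke Lemma \ref{curvloc} so that $R$ is maximized on the boundary and its boundary average dominates its area average; the Gauss--Bonnet identity you record is harmless but not needed for the sign.
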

    \begin{proof}
    We first compute the time derivatives of $L(t) = L(\partial \bD,g(t))$ and $A(t) = A(\old,g(t))$:
    \begin{align*}
        L'(t) &= \partial_t \int_{\partial \bD} ds(t) = \int_{\partial \bD} -\frac{R}{2} ds(t),\\
        A'(t) &= \partial_t \int_{\old} dg(t) = \int_{\old} -R\;dg(t).
    \end{align*}
    Therefore $I(g(t))$ satisfies
    \begin{align}
        \frac{dI}{dt} &= \frac{2L(t)L'(t)}{A(t)} - \frac{L(t)^2 A'(t)}{A^2(t)}\\
        &= I(t)\parent{\frac{1}{A(t)}\int_{\old}R\;dg - \frac{1}{L(t)}\int_{\partial \bD}R \;dg_{\partial \bD}}\notag\\
        &=: I(t)\parent{\bar{R}_{\old(t)} - \bar{R}_{\partial \bD}(t)}.
    \end{align}
    So by {Lemma \ref{curvloc}}, we have $I'(t)\leq 0$.
    \end{proof}

\subsection{Spherical cap barriers and a comparison principle}

We now find reasonable barriers to control the existence time for our to-be-constructed solutions defined on $[-T_i,0)$, $T_i\ra \infty$.

A spherical cap in the sphere of radius $r>0$ with boundary curvature $k>0$ can be written as a metric over the hemisphere
    $$g(k,r)=\frac{r^2 \left(d\psi^2+\sin^2\psi\,d\theta^2\right)}{\big(\sqrt{1+r^2k^2}+rk\cos\psi\big)^2},\;
    \theta\in[0,2\pi],\;\psi\in\left[0,\frac{\pi}{2}\right].$$
    We see that as $r\ra \infty$, the metric $g(k,r)$ converges to the metric of a flat disk with radius $1/k$; as $r\ra 0^+$, the metric vanishes.\\
    If we set $r=r(t)$ for $t<0$, we have
    \begin{align*}
        \partial_t g(k,r(t)) &=\frac{2 r'(t)}{\,r(t)\sqrt{1+k^2r(t)^2}\,\bigl(\sqrt{1+k^2r(t)^2}+k\,r(t)\cos\psi\bigr)}\,g(t),\\
        \ric(g(k,r(t))) &= r(t)^{-2}g(t).
    \end{align*}
    To find super/subsolutions to the Ricci flow equation with constant boundary curvature, it suffices to find $r_1,r_2:(-\infty,0)\ra \bR_{+}$ satisfying
    \begin{equation}\label{capineq}
    \frac{r_1'}{1+k^2r_1^2} \geq -\frac{1}{r_1}\quad\quad \text{and} \quad\quad \frac{r_2'}{2(1+k^2r_2^2)}\leq -\frac{1}{r_2}
    \end{equation}
    because then
    $g(k,r_1(t))$ (supersolution) satisfies $$\partial_tg(k,r_1(t))\geq -2\ric(k,r_1(t))$$ and $g(k,r_2(t))$ (subsolution) satisfies $$\partial_tg(k,r_2(t))\leq -2\ric (k,r_2(t)),$$ both with the boundary condition of constant curvature $k>0$. A direct computation yields that $$r_1(t) = \frac{\sqrt{e^{-2k^2t} - 1}}{k}$$ and $$r_2(t) = \frac{\sqrt{e^{-4k^2t}-1}}{k}$$ easily satisfy (\ref{capineq}) with the vanishing condition $r_1(0) = r_2(0) = 0$.\\
    Indeed, setting $$U_i(t) := \frac{r_i^2}{\left(\sqrt{1+r_i^2k^2}+r_ik\cos\psi\right)^2}, \quad i=1,2,$$
    we see that $U_i(t)$'s are super/subsolutions to equation (\ref{2Du}) defined for all $t\in(-\infty,0)$ with matching boundary conditions at $\psi=\pi/2$. From now, we shall refer to these super/subsolutions as \textit{spherical cap barriers}.
    Applying the maximum principle to the conformal factors, we obtain the following nonlinear comparison principle:
    \begin{lemma}[Conformal factor comparison]\label{maxtime}
        Suppose we are given a solution to the (not necessarily rotationally symmetric) Ricci flow with boundary $u(\psi,\theta,t)$ starting from initial data $u_0 = u_0(\psi,\theta)$. If for a time-slice $U_1(-T)$ (resp. $U_2(-T)$) of the spherical cap supersolution $U_1$ (resp. subsolution $U_2$) one has $u_0\leq U_1(-T)$  (resp. $\geq$), then $u(t) \leq U_1(t-T)$ (resp. $\geq$) for all $t>0$ for which the flows are defined.
    \end{lemma}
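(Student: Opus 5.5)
We argue by the parabolic maximum principle applied to the logarithm of the ratio of the two conformal factors. We treat the supersolution case; the subsolution case is identical with the inequalities reversed. Set $V(\cdot,t) := U_1(t-T)$, $w := \log u$ and $W := \log V$. By construction $V$ satisfies
\[
\partial_t V \geq \Delta_0 \log V - 2 \quad\text{in }\old, \qquad \partial_{\nu_0} V = 2kV^{3/2}\quad\text{on }\partial\bD.
\]
The boundary identity holds because $g(k,r)$ has geodesic curvature exactly $k$. In logarithmic form,
\[
e^{W} W_t \geq \Delta_0 W - 2, \qquad \partial_{\nu_0}W = 2ke^{W/2},
\]
and $w$ satisfies the same relations with equality. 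Consider $f := w - W$, which satisfies $f(\cdot,0)\leq 0$. We want to show $\sup f(\cdot,t)\leq 0$ for all later times at which both flows exist. Since $V$ exists for $t<T$, this covers the whole common interval.

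\textbf{Interior.} Subtracting the two relations gives
\[
e^{w} w_t - e^{W}W_t \leq \Delta_0 f.
\]
Rewriting the left side as $e^{w}f_t + (e^{w}-e^{W})W_t$, we get
\[
f_t \leq e^{-w}\Delta_0 f - e^{-w}\bigl(e^{w}-e^{W}\bigr)W_t.
\]
The coefficient $(e^{w}-e^{W})$ equals $c\,f$ with $c$ bounded on compact time intervals, because both conformal factors are smooth and positive there. We also know $W_t<0$ explicitly, since $r_1$ is decreasing. Hence at any point where $f>0$, the zeroth-order term is $-e^{-w}cf\,W_t \geq 0$, which is the wrong sign. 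We therefore only get a linear bound of the form $f_t\leq e^{-w}\Delta_0 f + Cf$. To absorb this term we work with $\tilde f := e^{-2Ct}f$, or argue with $\max(f,0)$ and Gronwall. This gives the uniformly parabolic structure needed for the standard maximum principle.

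\textbf{Boundary.} This is the main point to check. On $\partial\bD$ we have
\[
\partial_{\nu_0}f = 2k\bigl(e^{w/2}-e^{W/2}\bigr),
\]
which is \emph{positive} exactly where $f>0$. This is a Robin-type condition with the bad sign, so the Hopf lemma alone does not immediately exclude a first positive maximum on the boundary.

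We would handle this as follows. Write $\partial_{\nu_0} f = b f$ with $0<b\leq C'$ on compact time intervals. Then modify the test function to
\[
\hat f := e^{-\phi}\tilde f,
\]
where $\phi$ is a smooth function on the closed disk with $\partial_{\nu_0}\phi > C'$ on $\partial\bD$. For instance, take $\phi = A\,\psi^2$ with $A$ large, which is smooth at the pole. At a positive maximum of $\hat f$ on the boundary we then have
\[
\partial_{\nu_0}\hat f = e^{-\phi}(b - \partial_{\nu_0}\phi)\tilde f < 0,
\]
which contradicts maximality, since at a boundary maximum the outward normal derivative must be nonnegative. The conjugation by $e^{-\phi}$ only adds first- and zeroth-order terms with bounded coefficients to the interior inequality. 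Those terms are again absorbed by choosing the exponential time weight large. At an interior positive maximum the usual argument gives $\hat f_t<0$ after this choice of weight, so the set where $\hat f>0$ cannot appear. Hence $f\leq 0$ throughout.

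For the subsolution $U_2$, the same argument is applied to $W - w$ with $V = U_2(t-T)$. The boundary term then reads $2k\bigl(e^{W/2}-e^{w/2}\bigr)$, which has the same structure as before. Finally, the argument never uses rotational symmetry, since the barriers and $\phi$ are smooth functions on the closed disk and $\Delta_0$ is the full Laplacian; so the lemma holds for general $u_0(\psi,\theta)$ as stated.
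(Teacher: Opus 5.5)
Your proof is correct, but it uses a different comparison variable from the paper's, and this moves the difficulty from the interior to the boundary.

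\textbf{The paper's route.} It works with $\eta = V^{-1/2} - u^{-1/2}$ (written for the subsolution case). The condition $\partial_{\nu_0}u = 2ku^{3/2}$ is exactly $\partial_{\nu_0}(u^{-1/2}) = -k$, and the spherical caps satisfy the same identity. So the difference obeys a homogeneous Neumann condition $\partial_{\nu_0}\eta = 0$. The cost is in the interior. The equation $a_t = a^2\Delta_0 a - a|\nabla^0 a|^2 + a^3$ for $a = u^{-1/2}$ has a principal coefficient depending on the solution. The paper therefore linearizes $F(b)-F(a)$ along $\eta_s=(1-s)a+sb$ to get $\eta_t\ge A\Delta_0\eta+B\cdot\nabla^0\eta+C\eta$. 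It then finishes with an exponential time weight and Hopf's lemma.

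\textbf{Your route.} With $f=\log u-\log V$, the interior is easier: the principal parts cancel exactly, since $\Delta_0\log u-\Delta_0\log V=\Delta_0 f$. The nonlinearity lands in a bounded zeroth-order term. The boundary is harder: it becomes the Robin condition $\partial_{\nu_0}f=bf$ with $b>0$, which has the wrong sign for the maximum principle.

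Your conjugation by $e^{-\phi}$, with $\partial_{\nu_0}\phi>\sup b$, fixes this. Choose $\phi$ first and the time weight afterwards; the weight does not affect the boundary condition, so this order is harmless. The conjugated function then has strictly negative outward normal derivative wherever it is positive on $\partial\mathbb{D}$, so no Hopf lemma is needed at all.

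\textbf{Trade-off.} The paper's argument is shorter because it exploits the fact that the geodesic curvature condition is linear in $u^{-1/2}$. Yours is a general device that works for any boundary condition of the form $\partial_{\nu_0}u=G(u)$ with $G$ locally Lipschitz.
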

    \begin{proof}
        We prove the case in which one compares with a subsolution. For brevity, denote $v=U_2$. The conformal factor $u(t)$ satisfies the equation
    $$\begin{cases}
        u_t = \Delta_0\log u -2,\\
        \frac{u_{\nu_0}}{2u^{3/2}} = -\parent{\frac{1}{\sqrt{u}}}_{\nu_0}= k >0.
    \end{cases}$$
    So $u-v$ satisfies the equation
    $$\begin{cases}
        (u-v)_t \geq \Delta_{0}(\log u - \log v)\\
        \frac{\partial_{\nu_0}u}{2u^{3/2}} = \frac{\partial_{\nu_0} v}{2v^{3/2}} \iff \parent{\frac{1}{\sqrt{u}} - \frac{1}{\sqrt{v}}}_{\nu_0} = 0.
    \end{cases}$$ 

    To use Hopf's boundary point lemma, we apply the maximum principle on the function $\eta := \frac{1}{\sqrt{v}} - \frac{1}{\sqrt{u}} =: b-a$. Via straightforward computation, we see that $\eta$ satisfies the equation
    $$\begin{cases}
        \eta_t \geq (b^2\Delta_0 b - b|\nabla^0 b|^2 + b^3)-(a^2\Delta_0 a - a|\nabla^0 a|^2 + a^3)  =: F(b) - F(a)\\
        \eta_{\nu_0} = 0.
    \end{cases}$$
    
    Notice that
    $$F(b) - F(a) = A\Delta_0 \eta + B\cdot \nabla^0 \eta + C\eta,$$
    where
    \begin{align*}
        \eta_s&:= (1-s)a + sb,\quad
        A := \int_0^1 \eta_s^2 ds >0,\\
        B&:= -2\int_0^1 \eta_s\nabla \eta_s\;ds,\quad
        C:= \int_0^1 2\eta_s\Delta \eta_s - |\nabla \eta_s|^2 + 3\eta_s^2\;ds.
    \end{align*}
    Note also that $A,B,C$ are bounded whenever either of $u$ or $v$ does not vanish, say in a compact interval $t\in I$. Choosing $\Lda > \sup_{t\in I} C$, define $$\tilde{\eta} := e^{-\Lda t}\eta.$$
    Then $\tilde{\eta}$ satisfies
    $$\begin{cases}
        \tilde{\eta}_t \geq A\Delta \tilde{\eta} + B\cdot \nabla \tilde{\eta} + (C-\Lda)\tilde{\eta},\\
        \tilde{\eta}_{\nu_0} = 0.
    \end{cases}$$
    By Hopf's boundary point lemma, $\tilde{\eta}$ cannot attain a strict boundary extremum and the minimum $\min_{t\in I} \tilde{\eta}(\cdot,t)$ has to be positive as long as $U_2(t-T)$ survives, given the initial data comparison. To see this, suppose that on the contrary say at $(\psi_0,t_0)\in [0,\pi/2)\times I$ we have $\min_{[0,\pi/2]\times I} \tilde{\eta} = \tilde{\eta}(\psi_0,t_0)<0$. Then the above inequality on $\tilde{\eta}$ yields
    \begin{align*}
        0\geq \tilde{\eta}_t(\psi_0,t_0) &\geq A\Delta \tilde{\eta}(\psi_0,t_0) + B\cdot \nabla \tilde{\eta}(\psi_0,t_0) + (C-\Lda)\tilde{\eta}(\psi_0,t_0)\\ &\geq (C-\Lda)\tilde{\eta}(\psi_0,t_0) >0,
    \end{align*}
    which is false.
    
    So we know that $u-v$ cannot reach a negative minimum as long as one of the two functions survives, and consequently $u$ does not vanish as long as $v$ survives.
    \end{proof}
We also record the following scalar curvature comparison principle without proof. It directly follows from the standard elliptic maximum principle applied to a pair of functions that satisfy the same boundary derivative condition.
\begin{lemma}[Scalar curvature comparison]\label{scalarcomp}
    Let $g_1  = u_1g_0$ and $g_2 = u_2g_0$ be two conformally equivalent metrics on the closed disk $\old$ with the same boundary curvature. Suppose that at some point $p\in \old$ (in the interior or boundary), $u_1(p) = u_2(p)$ and $u_1\geq u_2$ in a neighborhood of $p$. Then we have
    $$R_{g_1}(p) \leq R_{g_2}(p).$$
\end{lemma}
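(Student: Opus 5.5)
We use the formula $R(u) = u^{-1}(2 - \Delta_0 \log u)$ from Section \ref{setup} and compare the two metrics at the touching point $p$. Set $w := \log u_1 - \log u_2$. Near $p$ we have $w \geq 0$, and $w(p) = 0$, so $p$ is a local minimum of $w$. The statement is elliptic, so the plan is to show $\Delta_0 w(p) \geq 0$. Then, since $u_1(p) = u_2(p)$,
\[
R_{g_1}(p) - R_{g_2}(p) = \frac{1}{u_1(p)}\bigl(-\Delta_0 \log u_1(p) + \Delta_0 \log u_2(p)\bigr) = -\frac{\Delta_0 w(p)}{u_1(p)} \leq 0 .
\]

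\emph{Interior case.} If $p$ is an interior point, the local minimum gives $\nabla^0 w(p) = 0$ and a nonnegative semidefinite Hessian, so $\Delta_0 w(p) \geq 0$ immediately.

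\emph{Boundary case.} If $p \in \partial \bD$, this is the only real obstacle: a boundary minimum does not by itself control the normal second derivative. We use the matching boundary condition. Both metrics have boundary geodesic curvature $k$, so by \eqref{2Du},
\[
\partial_{\nu_0} u_i = 2k u_i^{3/2}.
\]
Hence $\partial_{\nu_0} \log u_i = 2k u_i^{1/2}$, and at $p$ this gives $\partial_{\nu_0} w(p) = 0$ because $u_1(p) = u_2(p)$. Since $w|_{\partial\bD}$ also has a local minimum at $p$, its tangential derivative vanishes there. So $\nabla^0 w(p) = 0$.

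We then work in Fermi-type coordinates $(s, \tau)$ near $p$, with $\tau$ the arclength along $\partial\bD$ and $s \geq 0$ the inward $g_0$-distance. (On the hemisphere these are just colatitude and longitude, and the boundary is totally geodesic.) In these coordinates,
\[
\Delta_0 w(p) = w_{ss}(p) + w_{\tau\tau}(p) + (\text{first-order terms}) .
\]
The first-order terms vanish because $\nabla^0 w(p) = 0$. The tangential term satisfies $w_{\tau\tau}(p) \geq 0$ since $p$ is a minimum along the boundary. For the normal term, $w(s, \tau_p) \geq 0$ for small $s \geq 0$ with $w(0) = w_s(0) = 0$. Taylor expansion then forces $w_{ss}(p) \geq 0$, since otherwise $w < 0$ for small $s > 0$. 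Therefore $\Delta_0 w(p) \geq 0$ in this case as well, and the inequality $R_{g_1}(p) \leq R_{g_2}(p)$ follows.

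The only real content is the boundary case. Without equal boundary curvature, a one-sided normal derivative would prevent us from concluding $w_{ss}(p) \geq 0$; the condition $\partial_{\nu_0} w(p) = 0$ is exactly what makes the argument close.
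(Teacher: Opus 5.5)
Your proof is correct and is the argument the paper intends. The paper records this lemma without proof, saying only that it follows from the elliptic maximum principle for two functions with the same boundary derivative condition; your identity $R_{g_1}(p)-R_{g_2}(p)=-\Delta_0 w(p)/u_1(p)$, combined with the second-derivative test at the touching point, and in the boundary case with $\partial_{\nu_0}\log u_i=2k u_i^{1/2}$ forcing $\nabla^0 w(p)=0$ so that the one-sided Taylor expansion gives $w_{ss}(p)\geq 0$, is exactly that argument written out (your $s$ is $\pi/2-\psi$ rather than the colatitude itself, which is immaterial).
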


\section{The critical Robin heat equation and spectral analysis}\label{robinbcspectrum}
In this section, we recall some spectral properties of the linear critical Robin heat equation. This arises from a linearization of \eqref{2Du} about its flat stationary solution and will play an important role in both our construction of an ancient solution and the proof of its uniqueness. 
\subsection{A linearized operator and its eigenvalue problem.} Let $u(t)$ denote an ancient solution of the {Ricci flow with boundary} \eqref{2Du}. Then $w := 1/\sqrt{u}$ satisfies the equation
$$\begin{cases}
    w_t = w^3(\Delta_0 \log w +1).\\
    \partial_{\nu_0}w \equiv -k.
\end{cases}$$
Consequently the profile function $$v := \frac{1}{\sqrt{u}} - \frac{1}{\sqrt{u_\fl}}$$ satisfies the equation
\begin{equation}\label{eq_profile}
\begin{cases}
    v_t = w_t = \big(v+\frac{1}{\sqrt{u_\fl}}\big)^3\big(\Delta_0\log (v+\frac{1}{\sqrt{u_\fl}}) +1\big),\\
    \partial_{\nu_0}v \equiv 0,
\end{cases}
\end{equation}
where $u_{\fl}$ is as defined in \eqref{eq_flat}. To describe the asymptotics of the flow near $t=-\infty$, we study its linearization about the function 
\begin{equation}\label{eq_w}
w_\infty(\psi) := u_\fl^{-1/2}(\psi) =  k(1+\cos\psi)
\end{equation}
which we prescribe as the backward limit of $1/\sqrt{u(t)}$: 
$$\begin{cases}
   \Phi_t =  w_\infty^3\Delta_0 \parent{\frac{\Phi}{w_\infty}},\\
    \partial_{\nu_0}\Phi = 0.
\end{cases}$$
Separating variables with $\Phi(t,\cdot) = T(t)\phi(\cdot)$, we are led to consider the eigenfunctions to the following {boundary value problem}:
\begin{equation}\label{eigen{boundary value problem}}
\begin{cases}
    w_\infty^3\Delta_0\parent{\frac{\phi}{w_\infty}} = \lda_k \phi,\\
    \partial_{\nu_0}\phi = 0.
\end{cases}
\end{equation}
After a change of coordinates, the above problem can be written as a linear {boundary value problem} over a flat disk in $\bR^2$. Set
$$f := \frac{\phi}{w_\infty},\qquad r = \tan \frac{\psi}{2}.$$
Then the function $w_\infty$ can be written as
$$w_\infty = w_\infty(r) = \frac{2k}{1+r^2};$$
and the {boundary value problem} (\ref{eigen{boundary value problem}}) can be written as
\begin{align}\label{critrobin}
    \begin{cases}
        \Delta_{\bR^2}f= k^{-2}\lda_k f = :\mu f \qquad&\text{ in } \old,\\
        \partial_rf = f \qquad&\text{ on } \partial \bD,
    \end{cases}
\end{align}
where $\old\subset \bR^2$ denotes as before the Euclidean disk of radius $1$.

\subsection{A unique positive eigenfunction and its properties}\label{subsec_eigenfunction}
 In this subsection, we recall the  results of 
  the eigenvalue problem (\ref{critrobin}) discussed in the works of Bourni--Langford (\cite{BL23, BL25}). First recall that by standard methods, the eigenvalues of \eqref{critrobin} satisfy
  \[
  \mu_0>\mu_1\geq\mu_2\geq\mu_3\geq\cdots
  \]
  with $\mu_k\rightarrow -\infty$ as $k\rightarrow \infty$ (cf. \cite[\S6.5.1]{Evans}).
 \begin{lemma}[cf. {\cite[Lemma 2.1]{BL25}}]\label{lem_eigenfunction}
     The positive eigenspace of (\ref{critrobin}) is one-dimensional. Moreover, the positive eigenfunction $f_0 = f_0(r)$ is radial and can be written as a series
     $$f_0(r) = a_0\sum_{j=0}^\infty \frac{\mu_0^j}{4^j(j!)^2} r^{2j}$$
     where $a_0\in \bR$ is a constant and $\mu_0 >1$ is the positive eigenvalue corresponding to $f_0$ defined by
     \begin{equation}\label{mu_0}
    \sum_{j=1}^\infty \frac{(2j-1) \mu_0^j}{4^j(j!)^2} = 1.
    \end{equation}
 \end{lemma}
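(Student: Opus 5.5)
We would prove Lemma \ref{lem_eigenfunction} by combining a variational characterization of the top eigenvalue of \eqref{critrobin} with separation of variables in polar coordinates. The associated quadratic form is
\[
Q(f) = \int_{\partial\bD} f^2\,d\theta - \int_{\bD}|\nabla f|^2\,dx ,
\]
since integrating by parts with $\partial_r f = f$ gives $\int_{\bD} f\Delta f = Q(f)$. So $\mu_0 = \sup_{\|f\|_{L^2}=1} Q(f)$.

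\textbf{Step 1: $\mu_0>0$.} Testing with constants gives $Q(1)=2\pi>0$, so $\mu_0>0$.

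\textbf{Step 2: the top eigenspace is simple and spanned by a positive radial function.} The standard Krein--Rutman/Courant argument applies:
\begin{itemize}
\item $Q(|f|)=Q(f)$, so a maximizer may be taken nonnegative.
\item By the strong maximum principle and Hopf's lemma (the Robin condition prevents a boundary zero), a nonnegative maximizer is strictly positive.
\item Two linearly independent maximizers would admit a combination vanishing somewhere, contradicting positivity. Hence $\mu_0$ is simple.
\item Rotations preserve $Q$, so a simple eigenfunction is radial.
\end{itemize}

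\textbf{Step 3: the series for $f_0$.} For a radial function, $\Delta f = f'' + r^{-1}f'$, and regularity at the origin forces $f = a_0 I_0(\sqrt{\mu_0}\, r)$. Expanding the modified Bessel function gives exactly the series
\[
f_0(r) = a_0\sum_{j\ge0} \frac{\mu_0^j r^{2j}}{4^j (j!)^2}.
\]

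\textbf{Step 4: the equation for $\mu_0$.} Impose $f'(1)=f(1)$. Since $f'(1)=a_0\sum_j 2j\,c_j$ with $c_j=\mu_0^j/(4^j(j!)^2)$ and $f(1)=a_0\sum_j c_j$, this becomes $\sum_{j\ge1}(2j-1)c_j = c_0 = 1$, which is \eqref{mu_0}. The left side of \eqref{mu_0} is increasing in $\mu_0\ge 0$ and vanishes at $0$, so it has a unique positive root. That this root exceeds $1$ follows from evaluating at $\mu_0=1$: the sum is $1/4 + 3/64 + \cdots < 1$.

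\textbf{Step 5: no other positive eigenvalues.} This is the step most likely to need care. Expand any eigenfunction in Fourier modes $g_n(r)e^{in\theta}$. A positive eigenvalue $\mu$ forces $g_n = I_n(\sqrt{\mu}\,r)$ together with the boundary condition $\sqrt{\mu}\,I_n'(\sqrt{\mu}) = I_n(\sqrt{\mu})$.
\begin{itemize}
\item For $n\ge1$: by the series, $x I_n'(x)/I_n(x) \ge n \ge 1$, with equality only as $x\to0$. So for $n\ge2$ there is no solution with $x>0$. For $n=1$ we get $x I_1'/I_1 = 1 + O(x^2) > 1$ whenever $x>0$, so again no solution.
\item For $n=0$: the radial analysis of Steps 3--4 gives only $\mu_0$.
\end{itemize}
Hence the positive eigenspace is exactly one-dimensional.
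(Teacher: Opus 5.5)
Your proof is correct, and it is essentially the standard separation-of-variables argument behind the paper's citation of Bourni--Langford, since the paper gives no argument of its own: you reduce to the regular Bessel solution $I_0(\sqrt{\mu}\,r)$, obtain \eqref{mu_0} from the Robin condition, and rule out modes $n\ge 1$ via $xI_n'(x)/I_n(x)>n\ge 1$ for $x>0$. Two small remarks: Step 5 alone already forces every positive-eigenvalue eigenfunction to be a multiple of $I_0(\sqrt{\mu_0}\,r)$, so the Krein--Rutman Step 2 is redundant; and Step 1 actually gives $\mu_0\ge Q(1)/\|1\|_{L^2}^2=2>1$ without evaluating the series.
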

 \begin{proof}
 The proof is the same as in \cite[Lemma 2.1]{BL25}
\end{proof}
Below we record two useful analytic properties of $f_0(r)$.
\begin{lemma}\label{f_0'(r)}
    The eigenfunction $f_0(r)$ satisfies the inequalities
    \begin{align}
    \mu_0f_0^2(r) &\geq (f_0'(r))^2,\\
    \mu_0f_0 &\geq f_0''.
    \end{align}
\end{lemma}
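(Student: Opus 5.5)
The plan is to use only the radial ODE satisfied by $f_0$ together with the series representation from Lemma \ref{lem_eigenfunction}; no Bessel function asymptotics should be needed. Since $f_0$ is the positive eigenfunction, we normalize $a_0>0$. Recall also $\mu_0>1>0$ by Lemma \ref{lem_eigenfunction}. Because $f_0$ is radial and solves $\Delta_{\bR^2} f_0=\mu_0 f_0$ on $\old$, it satisfies
\[
f_0''(r)+\frac{1}{r}f_0'(r)=\mu_0 f_0(r),\qquad 0<r\leq 1 .
\]

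\textbf{Step 1 (sign of $f_0'$).} Differentiate the series termwise:
\[
f_0'(r)=a_0\sum_{j\ge1}\frac{2j\,\mu_0^j}{4^j(j!)^2}\,r^{2j-1}.
\]
Every coefficient is positive, so $f_0'(r)\geq0$ on $[0,1]$. Moreover $f_0'(r)/r$ extends smoothly and nonnegatively to $r=0$, with value $a_0\mu_0/2$ there.

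\textbf{Step 2 (the second inequality).} Rearranging the ODE gives
\[
\mu_0 f_0-f_0''=\frac{f_0'}{r}.
\]
By Step 1 the right-hand side is nonnegative for $r\in(0,1]$, and it equals $a_0\mu_0/2>0$ in the limit $r\to0$. This gives $\mu_0 f_0\geq f_0''$ on all of $[0,1]$.

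\textbf{Step 3 (the first inequality).} Set $h:=\mu_0 f_0^2-(f_0')^2$. Differentiating and using Step 2,
\[
h'=2f_0'\bigl(\mu_0 f_0-f_0''\bigr)=\frac{2(f_0')^2}{r}\geq0 .
\]
At the origin, $h(0)=\mu_0 f_0(0)^2=\mu_0a_0^2\geq0$ because $f_0'(0)=0$. Hence $h$ is nondecreasing and $h\geq0$ on $[0,1]$, which is the claim $\mu_0f_0^2\geq(f_0')^2$.

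The only delicate point is bookkeeping at $r=0$, where the ODE has the singular coefficient $1/r$; the series expansion handles this cleanly. Everything else is a one-line computation.
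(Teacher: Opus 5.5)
Your proof is correct and is essentially the paper's argument: you differentiate $\mu_0 f_0^2-(f_0')^2$ to get $2f_0'(\mu_0 f_0-f_0'')=2(f_0')^2/r\ge 0$, and you read off the second inequality from the radial equation $\mu_0 f_0-f_0''=f_0'/r$ together with $f_0'\ge 0$. You also make explicit two points the paper leaves implicit: $f_0'\ge 0$ follows from the positive series coefficients (with $a_0>0$), and the initial value $\mu_0 f_0(0)^2-(f_0'(0))^2=\mu_0 a_0^2\ge 0$ is what is needed to integrate the monotonicity.
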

\begin{proof}
The first inequality follows from
$$\frac{d}{dr}(\mu_0f_0^2 - (f_0')^2) = 2f_0'(\mu_0f_0-f_0'') = 2\frac{(f_0')^2}{r} \geq 0.$$
The second inequality follows from the radial Laplacian formula
$$\Delta_\bD f_0 = f_0'' + \frac{1}{r}f_0' = \mu_0f_0$$
and $f_0'\geq 0$.
\end{proof}
\begin{proposition}[cf. {\cite[Corollary 2.3]{BL25}}]\label{rulingoutnull}
    The null eigenspace of \eqref{critrobin} consists of linear functions. Therefore, any non-negative ancient solution $f(t)$ to \eqref{critrobin} satisfying $f(t) = e^{o(t)}$ as $t\to -\infty$ must be
    $$f(t) = Ae^{\mu_0t}f_0 \text{  for some $A\geq 0$.}$$
    
\end{proposition}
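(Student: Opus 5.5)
We split the proof into two parts: first identify the null eigenspace of \eqref{critrobin}, then use a spectral expansion together with positivity to classify the slowly growing ancient solutions.

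\textbf{The null eigenspace.} Take $\mu=0$, so $f$ is harmonic on the unit disk with $\partial_r f = f$ on $\partial\bD$. Expand in Fourier modes,
$$f=\sum_n (a_n\cos n\theta+b_n\sin n\theta)\,r^{|n|}.$$
Then $\partial_r f = |n|\,r^{|n|-1}(\cdots)$, and on $r=1$ the Robin condition forces $|n|c_n = c_n$ for each mode coefficient $c_n$. Hence only $|n|=1$ survives, and $f=a x+b y$. So the null eigenspace is the two-dimensional space of linear functions.

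\textbf{Setting up the expansion.} Here $f(t)$ is an ancient solution of $\partial_t f=\mu\text{-operator}$, i.e.\ $f_t=\Delta f$ with $\partial_r f=f$ on $\partial\bD$ (time rescaled by $k^2$ as appropriate). The problem is self-adjoint in $L^2(\bD)$: Green's identity gives symmetry because both functions satisfy the same Robin condition. So $L^2$ has an orthonormal eigenbasis $\{\phi_j\}$ with eigenvalues $\mu_j$, and
$$f(t)=\sum_j c_j e^{\mu_j t}\phi_j.$$
This representation is justified by taking the expansion at time $t_0$ and flowing forward, for every $t_0$.

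\textbf{Killing the negative modes.} For each $j$ we have $c_j e^{\mu_j t}=\langle f(t),\phi_j\rangle$, so $|c_j| e^{\mu_j t}\le \|f(t)\|_{L^2}=e^{o(t)}$. If $\mu_j<0$, then $e^{\mu_j t}$ grows like $e^{|\mu_j||t|}$ as $t\to-\infty$, which beats $e^{o(t)}$ and forces $c_j=0$. This step needs the growth hypothesis in an $L^2$ or sup sense, which we read $f(t)=e^{o(t)}$ to mean. Therefore
$$f(t)=c_0e^{\mu_0t}f_0+\ell,$$
where $\ell$ is a time-independent linear function $ax+by$ coming from the null modes.

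\textbf{Using nonnegativity (main obstacle).} The remaining task is to show $\ell=0$, and this is where the hypothesis $f\ge0$ enters. Letting $t\to-\infty$, the term $c_0e^{\mu_0t}f_0$ tends to $0$ uniformly since $\mu_0>0$, so $\ell=\lim_{t\to-\infty} f(t)\ge0$ on $\old$. A linear function that is nonnegative on the disk and vanishes at the origin must be identically zero: its values at $\pm p$ are negatives of each other. Hence $\ell\equiv0$ and $f=Ae^{\mu_0t}f_0$.

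Finally, $A\ge0$ follows from $f_0>0$ and $f\ge0$. The one point to verify carefully is that the convergence of the spectral expansion is strong enough to pass to the pointwise limit $t\to-\infty$. Since only finitely many modes survive, this is immediate.
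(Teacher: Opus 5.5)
Your proof is correct and follows essentially the argument the paper defers to in \cite[Corollary 2.3]{BL25}: Fourier separation shows the null space is the linear functions, an $L^2$ eigenfunction expansion with the subexponential growth bound kills every mode with $\mu_j<0$, and nonnegativity removes the odd linear part. The reduction to $f(t)=c_0e^{\mu_0t}f_0+\ell$ also uses Lemma \ref{lem_eigenfunction} (that $\mu_0$ is the only positive eigenvalue and is simple), which you should cite explicitly at that step.
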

\begin{proof}
The proof is the same as \cite[Corollary 2.3]{BL25}.
\end{proof}

\section{Construction of a rotationally symmetric ancient solution}\label{construction of arf}
In this section, we construct a sequence of rotationally symmetric \textit{old solutions}---solutions which survive for a long enough time but which are not ancient---to the {Ricci flow with boundary} equation (\ref{2DRF}). We will use the eigenfunction $f_0$ (fixing $a_0=1$) from Section \ref{subsec_eigenfunction} to construct a suitable sequence of initial data for our old solutions which last for a sufficiently long amount of time.
\subsection{Old solutions}
Define a sequence of metrics
$$g_i = U(\cdot,\tau_i)g_0 := \frac{d\psi^2+ \sin^2\psi \;d\theta^2}{w_\infty^2(1+\eps(\tau_i)f_0(\psi))^2}$$
where $w_\infty$ is as in \eqref{eq_w} and we define
$$\eps(\tau) := e^{k^2\mu_0\tau},\; \tau<0.$$
The metrics $(g_i)_i$ also have the same constant boundary curvature $k_{i} \equiv k$:
\begin{align*}
    -k_i &=\partial_{\nu_0} \frac{1}{\sqrt{U(\psi,\tau_i)}} \\
    &=\partial_\psi|_{\psi=\pi/2} \frac{1}{\sqrt{U(\psi,\tau_i)}} \\
    &= \partial_\psi|_{\psi=\pi/2} (w_\infty(1+\eps(\tau_i)f_0(\psi))\\
    &= \partial_\psi w_\infty\parent{\hpi}\parent{1+\eps(\tau_i)f_0\parent{\hpi}} + w_\infty\parent{\hpi}\eps(\tau_i)\partial_\psi f_0\parent{\hpi}\\
    &= -k\parent{1+\eps(\tau_i)f_0\parent{\hpi}} + k\eps(\tau_i)f_0\parent{\hpi} = -k.
\end{align*}
We can also compute the scalar curvature of $U(\cdot,\cdot)g_0$ given by the formula
\begin{align*}
    R(U(\psi,t)g_0) &= \frac{1}{U(\psi,t)}\cdot \parent{2  -\Delta_0\log U(\psi,t)}\\
    &= \frac{1}{U(\psi,t)}\cdot \parent{2 - \Delta_0 \log u_\fl + 2\Delta_0 \log (1+\eps(t)f_0(\psi)) }\\
    &= \frac{1}{U(\psi,t)}\cdot 2\Delta_0\log (1+\eps(t)f_0)\\
    &= {2k^2}{(1+\eps(t)f_0(r))^2}\Delta_{\bD}\log (1+\eps(t)f_0(r))\\
    &= {2k^2\eps(t)}\parent{\mu_0f_0(r)(1+\eps(t)f_0^2(r)) - \eps(t)(\partial_r f_0(r))^2}\\
    &= 2k^2\eps(t)\parent{\mu_0f_0(r) + \eps(t)(\mu_0f_0^2(r)-(\partial_rf_0)^2)} \geq 0,
\end{align*}
from which it follows
\begin{align*}
    \lim_{t\to -\infty}R(U(\psi, t)g_0)=0.
\end{align*}
Note that by {Lemma \ref{f_0'(r)}} we then also have $R(U(r,t))> 0$ for any $t$ and $r$. Thus we can define a sequence of positively curved, rotationally symmetric {Ricci flows with boundary} $g_i(t)$ flowing out of the initial data $g_i(0) = g_i$. 
Now we shall invoke the spherical cap barriers constructed in the previous section to prove that the flows $g_i(t)$ survive for long enough times.

\begin{lemma}
    Denote the maximal existence interval of the old solutions $g_i(t)$ by $[0,t_i)$. Then $t_i \ra \infty$.
\end{lemma}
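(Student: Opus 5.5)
The plan is to trap the initial data $g_i = U(\cdot,\tau_i)g_0$ above a spherical cap subsolution $U_2$ and then apply the conformal factor comparison principle, Lemma \ref{maxtime}. By that lemma, if $U(\cdot,\tau_i)\geq U_2(-T_i)$ on $\old$ for some $T_i>0$, then $u_i(t)\geq U_2(t-T_i)>0$ for all $t<T_i$ in the existence interval. So $u_i$ stays bounded away from zero on $[0,\min(t_i,T_i))$. Together with the monotonicity $u_t<0$, this should give $t_i\geq T_i$.

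For the last implication, I will argue that a positively curved rotationally symmetric flow with boundary (with $u_\psi,R_\psi\geq 0$ preserved by Lemmas \ref{confloc} and \ref{curvloc}) can only become singular when the conformal factor degenerates, i.e. when $\min u\to 0$ (equivalently, the area tends to zero). This is in line with the half-spherical singularity of \cite{CoMu19}. So it suffices to show $T_i\to\infty$.

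To find $T_i$, compare the square roots of the reciprocals. We have $U(\cdot,\tau_i)^{-1/2}=w_\infty(1+\eps(\tau_i)f_0)$ with $w_\infty=k(1+\cos\psi)$. The cap subsolution satisfies
$$U_2(t)^{-1/2}=\frac{\sqrt{1+r_2^2k^2}+r_2k\cos\psi}{r_2} = k\Big(\sqrt{1+(kr_2)^{-2}}+\cos\psi\Big).$$
The required inequality $U(\cdot,\tau_i)\geq U_2(-T_i)$ therefore reads
$$(1+\cos\psi)\big(1+\eps(\tau_i)f_0(\psi)\big)\leq \sqrt{1+(kr_2(-T_i))^{-2}}+\cos\psi .$$
Since $f_0$ is increasing in $r$, we have $0<f_0\leq f_0(1)=:C_0$. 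The left side is at most $1+\cos\psi+2C_0\eps(\tau_i)$. So it suffices that
$$\sqrt{1+(kr_2)^{-2}}\geq 1+2C_0\eps(\tau_i),$$
which holds once $(kr_2)^{-2}\geq 4C_0\eps(\tau_i)+4C_0^2\eps(\tau_i)^2$, i.e. $e^{4k^2T_i}-1\leq \big(4C_0\eps(\tau_i)(1+C_0\eps(\tau_i))\big)^{-1}$.

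Choosing $T_i$ to be the largest such value gives $T_i\approx \frac{1}{4k^2}\log\frac{1}{4C_0\eps(\tau_i)} = \frac{\mu_0|\tau_i|}{4}+O(1)$. Since $\tau_i\to-\infty$, this tends to $\infty$, which gives $t_i\geq T_i\to\infty$.

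The main obstacle I expect is the step "$u$ bounded below implies the flow continues." I would first try to justify it directly. A uniform positive lower bound on $u$ on $[0,T)$, together with the upper bound $u\leq u(0)$ from $u_t<0$, makes \eqref{2Du} uniformly parabolic with a smooth oblique (Robin-type) boundary condition. Standard quasilinear parabolic theory (Krylov–Safonov/Lieberman estimates for oblique problems), applied to $\log u$ or to $w=u^{-1/2}$, then yields higher-order bounds, so the solution extends past $T$. If that route is too heavy, I would instead cite the characterization of the maximal time in \cite{CoMu19}, namely that the flow exists until the area vanishes. The lower barrier then bounds the area from below by that of $U_2(t-T_i)$, which is positive for $t<T_i$.
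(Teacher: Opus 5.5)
Your proposal is correct and is essentially the paper's argument: the paper's proof is a one-line appeal to Lemma \ref{maxtime}, comparing the initial data $U(\cdot,\tau_i)$, which converge uniformly to the flat disk, with a spherical cap barrier. You add two details the paper omits: the explicit check that $U(\cdot,\tau_i)\geq U_2(-T_i)$ with $T_i=\frac{\mu_0|\tau_i|}{4}+O(1)$, and the justification that a positive lower bound on $u$ lets the flow continue, which the paper leaves implicit in the closing remark of Lemma \ref{maxtime} and in \cite{CoMu19}.
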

\begin{proof}
    This follows from {Lemma \ref{maxtime}} comparing the spherical caps to the initial data $g_i$, $g_i(\tau_i)$'s uniformly converge to the flat disk as $\tau_i\ra -\infty$.
\end{proof}
After a shift in time, denote the maximum existence interval of $g_i(t)$ as $[T_i,0)$, $T_i\ra -\infty$, so that the initial data $g_i(T_i)$ is precisely $ g_i$ and the flow vanishes at time $t=0$. Denote
$$g_i(t) = u_i(\cdot,t)g_0,\qquad u_i(\cdot,T_i) = U(\cdot,\tau_i).$$
We present two other nice properties of the initial data $g_i = g_i(T_i)$ of the old solutions $(g_i(t))_{t\in [T_i,0)}$.
\begin{lemma}\label{olddataproperties}
    The initial data $(g_i = U(\cdot,\tau_i)g_0)_i$ satisfy the following properties:
    \begin{align}
        \partial_\psi U(\psi,\tau_i) &\geq 0,\\
        \partial_\psi R(g_i)&\geq 0.
    \end{align}
    The above inequalities are strict away from the pole $\psi=0$.
\end{lemma}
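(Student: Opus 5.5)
Both inequalities reduce to explicit one-variable checks once we pass to the flat coordinate $r=\tan\frac{\psi}{2}$ of Section \ref{robinbcspectrum}. Since $dr/d\psi=\frac12\sec^2\frac{\psi}{2}>0$ on $[0,\hpi]$, the signs of $\partial_\psi$ and $\partial_r$ agree, with $r\in[0,1]$. Throughout we use the series of Lemma \ref{lem_eigenfunction} with $a_0=1$. All its coefficients are positive and $\mu_0>1$, so $f_0>0$, and
$$f_0'(r)=\sum_{j\ge1}\frac{2j\,\mu_0^j}{4^j(j!)^2}r^{2j-1}>0 \quad\text{for } r>0.$$
Moreover $f_0'(r)/r$ is nondecreasing in $r$. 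The Robin condition in \eqref{critrobin} gives $f_0'(1)=f_0(1)$, hence $f_0'(r)\le r f_0(1)$ on $[0,1]$.

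For the conformal factor, $U_\psi\ge0$ is equivalent to $U^{-1/2}=w_\infty(1+\eps f_0)=\frac{2k(1+\eps f_0)}{1+r^2}$ being nonincreasing in $r$. Its $r$-derivative has the sign of
$$\eps f_0'(1+r^2)-2r(1+\eps f_0).$$
Using $f_0'\le r f_0(1)$ and $1+r^2\le 2$, this is at most $2r\bigl(\eps f_0(1)-1-\eps f_0(r)\bigr)$. That quantity is strictly negative for $r>0$ as soon as $\eps(\tau_i)f_0(1)\le 1$. Since $\eps(\tau_i)=e^{k^2\mu_0\tau_i}\to0$ as $\tau_i\to-\infty$, this holds for all large $i$, and we discard finitely many terms of the sequence (or simply choose the $\tau_i$ so negative from the start). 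This smallness requirement on $\eps$ is the only real constraint in the argument. I would double-check whether it can be avoided, but it is harmless for the construction.

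For the curvature, use the formula already computed:
$$R(g_i)=2k^2\eps\bigl(\mu_0f_0+\eps(\mu_0f_0^2-(f_0')^2)\bigr).$$
Differentiating in $r$ gives
$$\partial_r R=2k^2\eps\bigl(\mu_0 f_0'+2\eps f_0'(\mu_0f_0-f_0'')\bigr).$$
By the radial equation $f_0''+\frac1r f_0'=\mu_0 f_0$, exactly as in the proof of Lemma \ref{f_0'(r)}, we have $\mu_0f_0-f_0''=(f_0')^2/r$. Hence
$$\partial_r R=2k^2\eps\Bigl(\mu_0f_0'+\tfrac{2\eps (f_0')^2}{r}\Bigr)\ge0,$$
with strict inequality for $r>0$ because $f_0'>0$ there. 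Both statements hold at the pole with equality, by smoothness and rotational symmetry, and strictly away from it.
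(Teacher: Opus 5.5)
Your proof is correct and follows the paper's argument: you reduce $\partial_\psi U\ge 0$ to the sign of $2r(1+\eps f_0)-\eps(1+r^2)f_0'$ and rely on $\eps(\tau_i)$ being small, and you differentiate the explicit curvature formula. The paper also takes $\tau_i$ sufficiently negative, citing only boundedness of the series, whereas your bound $f_0'\le rf_0(1)$ gives the explicit threshold $\eps f_0(1)\le 1$. One slip: the radial equation gives $\mu_0f_0-f_0''=f_0'/r$, not $(f_0')^2/r$, but your displayed formula for $\partial_r R$ already uses the correct identity, so nothing else changes.
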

\begin{proof}
It suffices to prove the claimed statement for the function $U(\psi,t)$ and the metric $g(t) := U(\psi,t)g_0$.
\begin{align*}
    \partial_\psi U(t) &= -\frac{2w_\infty^{-3}\partial_\psi w_\infty}{(1+\eps(t)f_0)^2} - \frac{2\eps(t)\partial_\psi f_0}{w_\infty^2(1+\eps(t)f_0)^3}\\
    &= \frac{2}{w_\infty^2(1+\eps(t)f_0)^2}\cdot \parent{-\frac{\partial_\psi w_\infty}{w_\infty} - \frac{\eps(t)\partial_\psi f_0}{1+\eps(t)f_0}}.
\end{align*}
We compute the bounds on $\partial_\psi f_0$.
\begin{align*}
    \partial_\psi f_0 &= \partial_r f_0\cdot \frac{\partial r}{\partial \psi} = \partial_r f_0 \cdot \frac{1}{2}\sec^2\parent{\frac{\psi}{2}}\\
    &= \partial_r f_0 \cdot \frac{1+r^2}{2} = \frac{1+r^2}{2}\cdot \sum_{j=1}^\infty \parent{\frac{2j\mu_0^j}{4^j(j!)^2}r^{2j-1}} < \infty,
\end{align*}
\begin{align*}
    \frac{\partial_\psi w_\infty}{w_\infty} &= \frac{1+r^2}{2}\cdot \frac{\partial_r w_\infty}{w_\infty}
    = \frac{1+r^2}{2}\cdot \parent{-\frac{2r}{1+r^2}} = -r.
\end{align*}
Then we have
\begin{align*}
    \partial_\psi U(t) = \frac{2}{w_\infty^2(1+\eps(t)f_0)^2}\cdot\parent{r - \frac{\eps(t)(1+r^2)\sum_{j\geq 1}\frac{j\mu_0^j}{4^j(j!)^2}r^{2j-1}}{1+\eps(t)\sum_{j\geq 0} \frac{\mu_0^j}{4^j (j!)^2}r^{2j}}}.
\end{align*}
To ensure that $\partial_\psi U(\psi,t) \geq 0$, it suffices to show
$$\parent{1- \eps(t)\cdot \frac{(1+r^2)\sum_{j\geq 1}\frac{j\mu_0^j}{4^j(j!)^2}r^{2j-2}}{1+\eps(t)\sum_{j\geq 0} \frac{\mu_0^j}{4^j (j!)^2}r^{2j}}} \geq 0$$
for sufficiently small $\eps(t) >0$. Since both sums in the denominator and numerator are uniformly bounded (from above and below) for $r\in [0,1]$, we can always find $T<0$ such that
$$\partial_\psi U(\psi,t) > 0,\; \forall t\leq T.$$
Using {Lemma \ref{f_0'(r)}}, we can also determine the sign of $R_\psi(U)$, or equivalently the sign of $R_r(U)$:
\begin{align*}
    \partial_r R(U(r,t)) &= 2k^2\eps(t)\parent{\mu_0f_0'(r) +\eps(t)(2\mu_0f_0f'_0(r) - \partial_r(\partial_rf_0)^2)}\\
    &= 2k^2\eps(t)(\mu_0f_0'(r) +2\eps(t)f_0'(r)(\mu_0f_0(r) - f_0''(r)))\\
    &\geq 2k^2\eps(t)\mu_0f_0'(r)
\end{align*}
as  $f_0'(r) \geq 0$ for all $r\in [0,1]$. The fact that the proposed inequalities are strict away from the pole $\psi=r=0$ trivially follows from the properties of $f_0$.
\end{proof}
In summary, for each old data $g_i$ we have
\begin{itemize}
    \item $R_\psi$, $U_\psi \geq 0$ (strict inequality away from the pole $\psi=r=0$),
    \item $R_i>0$ and $u_i$ bounded,
    \item $g_i = U(\cdot,\tau_i)g_0$ uniformly converges to the flat disk exponentially fast, as $i\ra \infty$.
\end{itemize}
Using the properties of the old solutions listed above, we prove some useful relations between their scalar curvatures and conformal factors. 
\begin{lemma}\label{scalandconformal}
Our constructed old solutions $u_i(\psi,t)$ satisfy
\begin{gather}
R_i(0,t)\leq \frac{2}{u_i(\hpi,t)} - 2k^2\leq R_i(\hpi,t),\\
{1 - k^2u_i\parent{\hpi,t}} \leq e^{2k^2t},\\
u_i(0,t)\leq \frac{u_i(\hpi,t)}{\parent{1+k\sqrt{u_i(\hpi,t})}^2}.
\end{gather}
\end{lemma}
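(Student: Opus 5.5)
The plan is to compare $u_i(\cdot,t)$, at each fixed time, with the constant-curvature spherical cap $V=g(k,r)$ of Section \ref{barrier3}. I choose $r=r(t)$ so that the cap has the same boundary conformal factor:
$$V\parent{\hpi}=\frac{r^2}{1+r^2k^2}=u_i\parent{\hpi,t}=:u_b .$$
Solving gives $1/u_b=1/r^2+k^2$, so the cap curvature is $R_V\equiv 2/r^2=2/u_b-2k^2$. This is exactly the middle quantity in the first inequality. Moreover $\sqrt{u_b}=r/\sqrt{1+r^2k^2}$ gives
$$V(0)=\frac{r^2}{\parent{\sqrt{1+r^2k^2}+rk}^2}=\frac{u_b}{\parent{1+k\sqrt{u_b}}^2},$$
which is the right-hand side of the third inequality. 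So all three statements reduce to comparing $u_i(\cdot,t)$ with this $V$, together with an ODE in time.

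\textbf{Step 1 (curvature sandwich).} Both metrics have boundary curvature $k$ and the same boundary value. Hence the Robin condition forces $u_\psi=V_\psi$ at $\psi=\hpi$, and so $h:=\log u_i-\log V$ satisfies $h=\partial_{\nu_0}h=0$ on $\partial\bD$. Integrating $\Delta_0 h=R_VV-R_iu_i$ gives
$$\int_{\old}R_i\,dA_{g_i}=R_V\,A(V),$$
which is also Gauss--Bonnet with equal boundary lengths. By Lemmas \ref{olddataproperties} and \ref{curvloc}, $R_i(\cdot,t)$ is nondecreasing in $\psi$, so $R_i(0,t)\le R_i\le R_i(\hpi,t)$. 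Now suppose $R_i(\hpi,t)<R_V$, so that $R_i<R_V$ everywhere. I would write both metrics in geodesic polar coordinates from the boundary, $ds^2+f(s)^2d\theta^2$. Here $f(0)=f_V(0)$ and $f'(0)=f_V'(0)$ (equal length, equal $k$), and $f''=-\tfrac{R}{2}f$. The Wronskian $W=f'f_V-ff_V'$ has $W(0)=0$ and $W'=\tfrac12(R_V-R_i)ff_V>0$, a Sturm comparison. Hence $f>f_V$ on the cap's range, and $f$ still reaches the pole with $f'=-1$. Combined with the total-curvature identity $\int\tfrac R2 f\,ds=f'(0)-f'(s_{\rm end})$, which takes the same value for both metrics, this should give a contradiction. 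The reverse case $R_i(0,t)>R_V$ is symmetric. I expect this step to be the main obstacle: the Wronskian alone does not immediately contradict, so the endpoint bookkeeping has to be done carefully. A fallback is a maximum-principle argument on $h$ using Lemma \ref{scalarcomp} at a first touching point.

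\textbf{Step 2 (time ODE).} At the boundary, $\partial_t u_b=-R_i(\hpi,t)\,u_b\le-(2-2k^2u_b)$ by Step 1. Setting $y=1-k^2u_b$, this reads $y'\ge 2k^2y$, so $e^{-2k^2t}y$ is nondecreasing. As $t\to0$ the flow vanishes, so $u_b\to0$ by Lemma \ref{maxtime}, and $y\to1$. Therefore $y(t)\le e^{2k^2t}$.

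\textbf{Step 3 (pole bound).} I would show $u_i(0,t)\le V(0)$, i.e.\ $h(0)\le0$. Since $R_i\le R_i(\hpi,t)$ and, by Step 1, $R_V\le R_i(\hpi,t)$ with the curvature increasing outward, the ODE $(\log u)_{\psi\psi}+\cot\psi(\log u)_\psi=2-Ru$ can be integrated inward from the boundary with matching Cauchy data. Equivalently, the Sturm comparison of Step 1 gives $f\ge f_V$ in the appropriate normalization. This should show that $u_i$ lies below $V$ at the pole. I would again check this via the Wronskian, translated back to the conformal factor at $\psi=0$.
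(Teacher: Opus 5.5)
Your cap $V$ is exactly the paper's $U_k$, and your Step 2 is the paper's ODE argument. Steps 1 and 3, however, carry all the content of the lemma and are not proved.

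\textbf{Step 1.} The Wronskian argument cannot produce a contradiction. Under $R_i<R_V$ everywhere it gives $f>f_V$ on $(0,s_V]$ and $s_i>s_V$, hence $A(g_i)>A(V)$. That is precisely what $\int R_i\,dA_{g_i}=R_V A(V)$ forces when $R_i<R_V$. The identity $\int\frac{R}{2}f\,ds=f'(0)-f'(s_{\mathrm{end}})=1-k\sqrt{u_b}$ is just Gauss--Bonnet and holds for every disk with your boundary data, so it adds no information. The reverse case $R_i>R_V$ is equally consistent. What is missing is a quantity that sees both endpoints. For example, $\mathcal{E}_V:=(f')^2+\frac{R_V}{2}f^2$ equals $u_b(k^2+\frac{R_V}{2})=1$ at $s=0$ and also equals $1$ at the pole. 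Its derivative is $\mathcal{E}_V'=ff'(R_V-R_i)$, with $ff'<0$ because $R_i>0$. Hence $\min R_i\le R_V\le\max R_i$, which together with Lemma \ref{curvloc} is the first line of the lemma.

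\textbf{Step 3.} This step fails as written. Step 1 itself shows that $R_i-R_V$ changes sign, so there is no one-sided curvature inequality to feed into a Sturm comparison; the claim ``$f\ge f_V$'' is borrowed from the contradiction hypothesis of Step 1. Even a pointwise bound $f\ge f_V$ would not give the pole bound, because $u_i(0,t)$ depends on the whole profile: $\sqrt{u(0)}=\frac12\lim_{s\to s_i}(s_i-s)\exp\int_0^s f^{-1}$, and the endpoints satisfy $s_i\neq s_V$. The monotonicity $R_\psi\ge 0$ has to enter essentially here, and in your sketch it does not.

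\textbf{How the paper handles it.} The paper slides the caps $U_{k,k'}$, which have curvature $R_V$ and boundary curvature $k'<k$, to a first interior touching point. It then applies Lemma \ref{scalarcomp} together with the strict monotonicity from Lemma \ref{curvloc}. This also explains why your fallback touching argument on $h$ is degenerate: $h=\partial_\nu h=0$ at the boundary. The paper then obtains $R_i(0,t)\le R_V$ from a second cap $U_{low}$ matching $u_i$ at the pole and at the boundary.

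\textbf{A fix within your ODE framework.} Use $\mathcal{E}:=(f')^2+\frac{R_i}{2}f^2$ instead.
\begin{itemize}
\item Its derivative is $\mathcal{E}'=\frac12(\partial_s R_i)f^2\le 0$, and $\mathcal{E}=1$ at the pole.
\item Hence $\frac{R_i}{2}f^2\ge 1-p^2$, where $p=-f'\in[k\sqrt{u_b},1]$. At $s=0$ this alone gives $R_i(\hpi,t)\ge R_V$.
\item In the coordinate $\log\tan(\psi/2)$ one has $\frac12\log\frac{4u_i(0,t)}{u_b}=\int_{k\sqrt{u_b}}^1\frac{(1-p)\,dp}{\frac{R_i}{2}f^2}$.
\item For the cap, $\frac{R_V}{2}f_V^2=1-p^2$, so the corresponding integral is $\int_{k\sqrt{u_b}}^1\frac{dp}{1+p}$.
\end{itemize}
Comparing the two integrals gives $u_i(0,t)\le \frac{u_b}{(1+k\sqrt{u_b})^2}$, which is the third inequality.
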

\begin{proof}
    Let $u(\psi,t) = u_i(\psi,t)$ be one of our old solutions. We first prove the lower bound of $R(\hpi,t)$. Define
    $$U_k(\psi,t) := \frac{u(\hpi,t)}{(1+k\sqrt{u(\hpi,t})\cos\psi)^2}.$$
    $U_k(\psi,t)$ is a spherical cap of boundary curvature $k$ such that
    $$U_k\parent{\hpi,t} = u\parent{\hpi,t}.$$
    Suppose for some $t_0$, $U_k(t_0)$ lies below $u(t_0)$ locally near $\psi=\hpi$ (they intersect tangentially at $\psi = \hpi$). By {Lemma \ref{scalarcomp}}, this is equivalent to saying that the metric $U_k(t_0)$ has scalar curvature strictly greater than that of $u(t)$'s at the boundary $\psi = \hpi$. Now we construct a spherical cap metric $U_{k,k'}$ of same scalar curvature as $U_k(t)$ but with a different boundary curvature $k'>0$.
    Define
    $$U_{k,k'}(\psi,t) := \parent{\sqrt{\frac{1}{u\parent{\hpi,t}}-k^2+(k')^2} + k'\cos\psi}^{-2}.$$
    Notice that $U_{k,k} = U_k$. Observe that $U_{k',k}(\psi,t) >U_k(\psi,t)$ whenever $k'<k$ and vice versa. Hence, we can choose a small $k'<k$ so that $U_{k,k'}(t)$ makes first interior contact with $u(t)$ at some $\psi = \psi_0<\hpi$ from above. By {Lemma \ref{scalarcomp}}, we know that the metric $u(t)$ has the same scalar curvature at $\psi = \psi_0$ as its boundary point, which leads to a contradiction because each old solution maximizes its curvature at the boundary. Therefore, $U_k(t)$ must locally lie above $u(t)$ near $\psi = \hpi$ at all times. This implies
    $$R(u(\hpi,t)) \geq R(U_k(\hpi,t)) = \frac{2(1-u(\hpi,t)k^2)}{u(\hpi,t)} = \frac{2}{u(\hpi,t)} - 2k^2.$$
    Differentiating $\frac{1}{k^2} - u(\hpi,t)$ in $t$, we have
    \begin{align*}
        \frac{d}{dt}\parent{\frac{1}{k^2} - u\parent{\hpi,t}} &= u\parent{\hpi,t}R\parent{\hpi,t}\\
        &\geq u\parent{\hpi,t}\parent{\frac{2}{u(\hpi,t)} - 2k^2}\\
        &= 2k^2\parent{\frac{1}{k^2} - u\parent{\hpi,t}}.
    \end{align*}
    Integrating from time $t<0$ to $0$, one obtains
    $$\frac{1}{k^2} - u\parent{\hpi,t}  \leq \parent{\frac{1}{k^2} - u\parent{\hpi,0}}e^{2k^2t} = k^{-2}e^{2k^2t}. $$
    Now we prove the upper bound on $u(0,t)$ by proving that the spherical cap $U_k(\psi,t)$ actually lies nowhere below $u(\psi,t)$. Suppose this is not the case. Then, since $U_k(\psi,t)$ lies above $u(\psi,t)$ locally near $\psi=\hpi$, we can choose $k'$ such that $U_{k,k'}(t)\geq U_k(t)$ makes interior tangential contact with $u(t)$ from above. At that point, say $\psi=\psi_0$, one should have
    $$R(u(\psi_0,t)) \geq R(U_{k,k'}(t)) = \frac{2}{u(\hpi,t)} - 2k^2.$$
    Since our old solution satisfies $R_\psi\geq 0$, one must have
    $$R(u(\psi,t))\geq R(U_{k,k'}(t)) = \frac{2}{u(\hpi,t)} - 2k^2,\; \forall \psi \in [\psi_0,\hpi],$$
    meaning that $u(\psi,t)$ should actually locally lie below $U_{k}(t)$ for $\psi\in [\psi_0,\hpi]$. Appealing to {Lemma \ref{scalarcomp}} on the boundary, we are led to a contradiction. Therefore, $U_k(\psi,t) \geq u(\psi,t)$ for all $\psi$, from which follows the claimed bound
    $$u(0,t) \leq \frac{u(\hpi,t)}{\parent{1+k\sqrt{u(\hpi,t})}^2}.$$
    We prove the upper bound on $R(0,t)$ using a similar strategy. Define another spherical cap metric
    $$U_{low}(\psi,t) := \frac{u\parent{\hpi,t}}{\parent{1+\parent{\sqrt{\frac{u(\hpi,t)}{u(0,t)}}-1}\cos\psi}^2}.$$
    Observe that $$U_{low}(0,t) = u(0,t),\quad U_{low}(\hpi,t) = u(\hpi,t).$$ Also note that $U_{low}(t)$ is of constant scalar curvature
    $$R_{low}(\psi,t)\equiv R_{low}(t) = \frac{4}{\sqrt{u(0,t)u(\hpi,t)}} - \frac{2}{u(0,t)}$$
    with boundary geodesic curvature
    $$k_{low}(t) = \frac{1}{\sqrt{u(0,t)}} - \frac{1}{\sqrt{u(\hpi,t)}}>0.$$
    Suppose $U_{low}(t)$ lies above $u(t)$ locally near $\psi = 0$, i.e. $R_{low}(t) \leq R(0,t)$. By rotational symmetry, two metrics meet tangentially at $\psi=0$. Define, for $k'>0$,
    $$U_{low,k'}(\psi, t) = {\left( \sqrt{ \frac{2}{\sqrt{u(0, t) u(\pi/2, t)}} - \frac{1}{u(0, t)} + (k')^2 } + k' \cos\psi \right)}^{-2}.$$
    The metric $U_{low,k'}$ is a spherical cap metric of boundary curvature $k'>0$ and constant scalar curvature $R_{low}(t)$, so that $U_{low,k_{low}(t)}(t) = U_{low}(t).$ Modifying $k'$, we again derive a contradiction to the fact that $u(t)$ uniquely minimizes the scalar curvature at the pole $\psi=0$. Therefore, we conclude
    $$R(u(0,t)) \leq R_{low}(t) = \frac{4}{\sqrt{u(0,t)u(\hpi,t)}} - \frac{2}{u(0,t)}.$$
    To show the claimed upper bound on $R(u(0,t))$, it suffices to prove
    $$k \leq \frac{1}{\sqrt{u(0,t)}} - \frac{1}{\sqrt{u(\hpi,t)}}$$
    which follows from the lower bound on $u(0,t)$:
    $$\frac{1}{\sqrt{u(0,t)}} - \frac{1}{\sqrt{u(\hpi,t)}} \geq \frac{1+k\sqrt{u(\hpi,t)}}{\sqrt{u(\hpi,t)}} - \frac{1}{\sqrt{u(\hpi,t)}} = k.$$
\end{proof}

\subsection{Existence of the ancient solution}
    
    By {Lemma \ref{confloc}} and Lemma {\ref{curvloc}}, we know that both $u_i(t)$ and $R(g_i(t))$ attain their strict maximum on the boundary $\psi = \pi/2$ and strict minimum at the pole $\psi=0$. To take a smooth limit on our sequence of old solutions, we need a uniform curvature estimate for each old solution $g_i(t)$. To do that, we must first obtain a uniform $C^0$-estimate for old solutions that only depends on time, i.e.
    $$u_i(\psi,t) \geq C(t) >0$$
    using the monotonicity of isoperimetric ratio for our old solutions. This guarantees the uniform ellipticity of the Laplacians $\Delta_{g(t)} = u(t)^{-1}\Delta_0$ associated with the flow.
    \begin{lemma}[$C^0$-estimate on $u_i$]\label{C0bound}
        Let $g_i(t) = u_i(t)g_{0}$ denote the old solutions defined above. Then for each time $t_0<0$, we have the following $C^0$-estimate uniform in $i$:
        \begin{equation}
            \inf_{\psi,i}u_i(\psi,t_0) = \inf_{i}u_i(0,t_0) \geq C(t_0)>0.
        \end{equation}
    \end{lemma}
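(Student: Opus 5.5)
The plan is to combine the monotonicity of the isoperimetric ratio (Lemma \ref{isop}) with the pointwise relations of Lemma \ref{scalandconformal}. First, Lemma \ref{confloc} together with Lemma \ref{olddataproperties} shows that $u_i(\cdot,t)$ is minimized at the pole, which gives the equality $\inf_\psi u_i(\psi,t_0)=u_i(0,t_0)$. So it suffices to bound $u_i(0,t_0)$ from below uniformly in $i$.

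\emph{Step 1: bound the boundary value.} By the second inequality of Lemma \ref{scalandconformal}, $u_i(\hpi,t)\geq k^{-2}(1-e^{2k^2t})$, which is positive and independent of $i$ for each fixed $t<0$. Since $u_i(\cdot,t)$ is increasing in $\psi$, the boundary length satisfies $L_i(t)=2\pi\sqrt{u_i(\hpi,t)}\geq c(t)>0$.

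\emph{Step 2: isoperimetric control.} The initial data satisfy the hypotheses of Lemma \ref{isop}, by Lemma \ref{olddataproperties}. Hence $I(g_i(t))\leq I(g_i(T_i))$. The initial data converge uniformly, with derivatives, to the flat disk of radius $1/k$, so $I(g_i(T_i))\to 4\pi$ and is bounded by a constant $I_0$ uniform in $i$. Therefore $A_i(t)\geq L_i(t)^2/I_0\geq c'(t)>0$. I would also record an upper bound $u_i(\hpi,t)\leq k^{-2}$. This holds because $u_t<0$ forces $u_i(\hpi,t)\le u_i(\hpi,T_i)\approx k^{-2}$, and in any case $U_k\ge u_i$ bounds $u_i$ above by spherical caps of controlled size.

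\emph{Step 3: pass from area to the pole value.} This is the main obstacle, because area is an integral quantity while $u_i(0,t)$ is pointwise. My first attempt would use the pointwise bound $U_{low}\ge u_i$ or $\le u_i$. From the proof of Lemma \ref{scalandconformal}, the comparison with $U_{low}(t)$, which agrees with $u_i$ at $\psi=0,\hpi$, is obtained near the pole. The goal is to show globally that $u_i\le U_{low}(t)$, using $R_\psi\ge0$ and the same sliding-cap argument with Lemma \ref{scalarcomp}. This would give
\[
A_i(t)\le A(U_{low}(t)) = \text{explicit function of } u_i(0,t),u_i(\hpi,t),
\]
and this explicit function tends to $0$ as $u_i(0,t)\to0$ while $u_i(\hpi,t)$ stays bounded. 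Combined with Step 2, this yields $u_i(0,t_0)\geq C(t_0)>0$.

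If the global comparison with $U_{low}$ fails, I would fall back on the monotonicity $u_\psi\ge0$ together with the upper bound $u_i\le U_k$. Writing $A_i\le 2\pi\int_0^{\pi/2}u_i(\psi,t)\sin\psi\,d\psi$, degeneration at the pole alone cannot kill the area. In that case I would instead use $\partial_t u=-Ru$ and the curvature bound $R_i(0,t)\le 2/u_i(\hpi,t)-2k^2$ from Lemma \ref{scalandconformal}. Integrating $\partial_t\log u_i(0,t)=-R_i(0,t)\geq -2/u_i(\hpi,t)+2k^2$ backward from $t_0$ only gives an upper bound, so I would integrate forward from $t_0$ to $0$ instead. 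That route controls $u_i(0,t_0)\ge u_i(0,t)$ for $t>t_0$, but it still needs a lower bound at some later time. For this reason the sliding-cap comparison in Step 3 is the route I would push first.
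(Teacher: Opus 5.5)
Your Steps 1--2 are fine; Step 2 is a legitimate alternative to the paper's Gauss--Bonnet ODE for the area lower bound. The gap is Step 3: it rests on a comparison that goes the wrong way.

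You can check this on the initial data. In flat coordinates $x$ with $r=|x|=\tan(\psi/2)$, one has $g_i=k^{-2}(1+\eps(\tau_i)f_0)^{-2}|dx|^2$. The caps $\frac{u(\hpi)}{(1+c\cos\psi)^2}g_0$ are exactly the metrics $(\alpha+\beta r^2)^{-2}|dx|^2$. So $U_{low}$ corresponds to the interpolant of $k(1+\eps f_0)$ at $r=0,1$ that is affine in $r^2$. Since $f_0$ is a power series in $r^2$ with positive coefficients, it is strictly convex in $r^2$. Hence $k(1+\eps f_0)$ lies strictly below that chord, i.e.\ $u_i>U_{low}$ on the interior at $t=T_i$.

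This agrees with the proof of Lemma \ref{scalandconformal}. The sliding argument there excludes precisely the configuration in which $U_{low}$ lies above $u_i$ near the pole. Its output $R_i(0,t)\le R_{low}(t)$, combined with $\Delta_0(\log u_i-\log U_{low})(0)=u_i(0,t)\bigl(R_{low}(t)-R_i(0,t)\bigr)$, says $U_{low}\le u_i$ near $\psi=0$. So you only get $A_i(t)\ge 2\pi\sqrt{u_i(0,t)u_i(\hpi,t)}$, which gives no lower bound on $u_i(0,t)$. You already noticed that monotonicity together with $u_i\le U_k$ cannot stop area from sitting away from the pole. Some genuinely pointwise input is needed.

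The paper supplies it with a pinching estimate coming from $R_i\ge 0$. With $v=\log u_i$, the condition $R_i\ge 0$ reads $(\sin\psi\,v_\psi)_\psi\le 2\sin\psi$. Integrating twice from the pole, where $v_\psi=0$, gives $u_i(\psi,t)\le\cos^{-4}(\psi/2)\,u_i(0,t)\le 4u_i(0,t)$, hence $A_i(t)\le 4\pi u_i(0,t)$. With this, your Step 1 alone already suffices: $u_i(0,t)\ge\frac14 u_i(\hpi,t)\ge\frac{1-e^{2k^2t}}{4k^2}$.

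Your fallback would also have worked; you discarded it because of a sign slip. Integrate
$\partial_t\log u_i(0,t)=-R_i(0,t)\ge 2k^2-\frac{2}{u_i(\hpi,t)}\ge-\frac{2k^2e^{2k^2t}}{1-e^{2k^2t}}$
over $[T_i,t_0]$. This gives a \emph{lower} bound: $u_i(0,t)/(1-e^{2k^2t})$ is nondecreasing. Therefore
$u_i(0,t_0)\ge u_i(0,T_i)\,(1-e^{2k^2t_0})=\frac{1-e^{2k^2t_0}}{4k^2(1+\eps(\tau_i))^2}\ge\frac{1-e^{2k^2t_0}}{16k^2}$.
As written, though, your main route is invalid and your fallback is left incomplete, so the proposal does not yet prove the lemma.
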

    \begin{proof}
        The first equality follows from Lemma \ref{confloc} and Lemma \ref{olddataproperties}.
        Recall the area formula of the metric $u_i(\cdot,t)g_0$:
        \begin{equation}\label{confarea}
            A_i(t) = 2\pi \int_0^{\pi/2} u_i(\psi,t)\sin\psi ~d\psi.
        \end{equation}
        First we present a pinching estimate on $u_i(\psi,t)$. Set $v = \log u_i$. By the positive scalar curvature condition $R_i\geq 0$, we have
        $$v_{\psi\psi} + \cot\psi ~v_\psi \leq 2$$
        which is equivalent to
        $$(\sin\psi\cdot v_\psi)_\psi \leq 2\sin\psi.$$
        Making use of the fact that $v_\psi(0,t) = 0$ (due to the rotational symmetry of the metric $u_i(\psi,t)g_{S^2}$), integrating the above inequality from $0$ to $\psi$ yields
        \begin{equation}\label{pinchingv}
            \sin\psi\cdot v_\psi(\psi,t) \leq 2(1-\cos\psi),
        \end{equation}
        which implies $$v_\psi(\psi,t) \leq 2\frac{1-\cos\psi}{\sin\psi} = 2\tan(\psi/2).$$
        Integrating the above equation again from $0$ to $\psi$, we have
        $$v(\psi,t) - v(0,t) \leq -4\log\cos (\psi/2),$$
        which yields a nice pinching estimate
        \begin{equation}\label{pinching}
            u_i(0,t) \leq u_i(\psi,t) \leq \cos^{-4}(\psi/2)\cdot u_i(0,t) \leq 4u_i(0,t)
        \end{equation}
        where the first inequality follows from {Lemma \ref{confloc}} and the last inequality from $\psi\in [0,\pi/2]$. In fact, via integrating the inequality (\ref{pinchingv}), we have a more general estimate
        \begin{equation}\label{pinchingratio}
            \frac{u_i(\psi,t)}{u_i(\psi',t)} \leq \parent{\frac{\cos (\psi/2)}{\cos (\psi'/2)}}^{-4}
        \end{equation}
        for $0\leq \psi'\leq \psi\leq \pi/2$.
        The relation (\ref{confarea}) gives us
        \begin{equation}\label{areaconfbound}
            A_i(t) \leq 2\pi\int_0^{\pi/2}u_i(0,t)\cos^{-4}(\psi/2)\sin\psi\;d\psi =  4\pi u_i(0,t).
        \end{equation}
        Therefore, to obtain the $i$ independent uniform lower bound on $u_i(0,t)$, it suffices to obtain a uniform-in-$i$ lower bound on the area $A_i(t)$.
        By the Gauss-Bonnet theorem
        $$\int_{\mathbb{D}}\frac{R}{2} dg_t + \int_{\partial \bD} k\;ds_t \equiv 2\pi = 2\pi \chi(\bD)$$
        one has
        \begin{equation}
            A_i'(t) = -\int_\bD R \; dg_t = -4\pi + 2kL_i(t).
        \end{equation}
        Since we know that $I_i(t)$ monotonically decreases for all $i$ to $2\pi$ by {Lemma \ref{isop}} and Lemma {\ref{olddataproperties}}, and the initial data of each old solution are all chosen very close to the flat disk, we can safely assume $2\pi \leq I_i(t) \leq 4\pi$, for all $i$ and $t$. Then we have the bounds
        \begin{equation}
            L_i(t) \leq \sqrt{4\pi A_i(t)}
        \end{equation}
        which imply
        \begin{equation}\label{odecomp}
            A_i'(t) \leq -4\pi + 2k\sqrt{4\pi A_i(t)}.
        \end{equation}
        Then, whenever $A_i(t) \leq \frac{\pi}{4k^2}$, 
        $$A_i'(t) \leq -2\pi.$$
        Integrating both sides from $t=t_0<0$ to $0$, we have
        $$A_i(t_0) \geq -2\pi t_0>0.$$
        Therefore, at $t=t_0$ we have a uniform lower bound
        \begin{equation}
            A_i(t_0) \geq \min\left\{2\pi|t_0|,\frac{\pi}{4k^2}\right\} =: 4\pi C(t_0).
        \end{equation}
        Combining with the previously obtained bound (\ref{areaconfbound}), we obtain the desired estimate.
    \end{proof}

    \begin{remark}
        The upper uniform $C^0$-bound follows easily from the pinching estimate (\ref{pinching}). Also note that the proof of {Lemma \ref{C0bound}} is essentially a type of noncollapsing estimate.
    \end{remark}

    \begin{lemma}[Uniform curvature estimate]\label{C0curvbound}
        $$\sup_{i,\psi}R_{g_i(t)}(\psi) = \sup_{i}R_{g_i(t)}(\pi/2) \leq C'(t) <\infty.$$
    \end{lemma}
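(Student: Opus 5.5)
By Lemma \ref{curvloc} and Lemma \ref{olddataproperties}, $R_{g_i(t)}$ is nondecreasing in $\psi$, so its maximum is attained on the boundary $\{\psi=\pi/2\}$. This gives the first equality, and it remains to bound $R_i(\pi/2,t)$ uniformly in $i$. My plan is to get this from uniform parabolic regularity for the conformal factor, and not from a direct maximum principle argument for $R$. The Neumann-type condition $\partial_\nu R = kR$ has the wrong sign for such an argument, since it pushes the maximum to the boundary and lets it grow there.

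\textbf{Step 1: two-sided $C^0$ bounds on a time window.} Fix $t_0<0$ and restrict to $i$ large enough that $T_i<t_0-1$. Applying Lemma \ref{C0bound} at each time $t\in[t_0-1,t_0]$ gives $u_i(\psi,t)\ge C(t)\ge C(t_0-1)>0$, because $C(\cdot)$ is monotone in $|t|$. For the upper bound, $u_i$ is decreasing in time by \eqref{2Du}, and Lemma \ref{scalandconformal} together with Lemma \ref{confloc} gives $u_i\le u_i(\pi/2,t)\le k^{-2}$. Hence $0<c\le u_i\le k^{-2}$ on $\overline{\mathbb D}\times[t_0-1,t_0]$, uniformly in $i$.

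\textbf{Step 2: Hölder and Schauder estimates.} Write \eqref{2Du} in divergence form,
\[
u_t=\operatorname{div}_0\!\big(u^{-1}\nabla^0 u\big)-2,
\]
with conormal boundary data $u^{-1}\partial_{\nu_0}u=2k u^{1/2}$, which is bounded by Step 1. The diffusion coefficient $u^{-1}$ lies between fixed positive constants, so De Giorgi--Nash--Moser theory up to the boundary for conormal problems gives a uniform $C^{\alpha,\alpha/2}$ bound on $u_i$ on $\overline{\mathbb D}\times[t_0-\tfrac34,t_0]$. At this point it is convenient to switch to $w=u^{-1/2}$. It satisfies $w_t=w^3(\Delta_0\log w+1)$ with the linear Neumann condition $\partial_{\nu_0}w=-k$, and its coefficients $w^2$ are now Hölder continuous. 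Interior and boundary Schauder estimates for oblique-derivative problems then give uniform $C^{1+\alpha}$ bounds, and bootstrapping once more gives uniform $C^{2+\alpha,1+\alpha/2}$ bounds on $[t_0-\tfrac12,t_0]$.

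\textbf{Step 3: conclusion and main obstacle.} Since $R_i=u_i^{-1}(2-\Delta_0\log u_i)$, the $C^2$ bounds from Step 2 together with $u_i\ge c$ give $R_i(\pi/2,t_0)\le C'(t_0)$, uniformly in $i$. The step I expect to be the main obstacle is the boundary Hölder estimate in Step 2, because the boundary condition is nonlinear in $u$. I would handle it by working with $w$, whose Neumann condition is linear and homogeneous after subtracting $w_\infty$ (compare \eqref{eq_profile}), and treating $w^3\Delta_0\log w$ as a uniformly parabolic quasilinear operator. The rotational symmetry also helps, since it reduces the problem to a one-dimensional equation in $\psi$ with a regular pole, where these estimates are classical. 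If the regularity route turns out to be delicate, my fallback is a maximum principle for a test quantity of the form $S=R\cdot\phi(u,\psi)$, with $\phi$ chosen so that the boundary term $kR$ is absorbed using $u_\psi\ge0$ and $u_\psi=2ku^{3/2}$ at $\psi=\pi/2$.
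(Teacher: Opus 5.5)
Your proposal is correct and follows essentially the same route as the paper: uniform two-sided $C^0$ bounds on a time window, De Giorgi--Nash--Moser H\"older estimates, Schauder estimates for $w=u^{-1/2}$ with the Neumann condition $\partial_{\nu_0}w=-k$, and then the curvature bound read off from $R=u^{-1}(2-\Delta_0\log u)$. The only real difference is in how the H\"older estimate is fed. The paper first derives the uniform gradient bound $0\le\partial_\psi u_i\le 2u_i\tan(\psi/2)$ from the pinching estimate, whereas you use the divergence form $u_t=\operatorname{div}_0(u^{-1}\nabla^0u)-2$ with conormal data, so that only $C^0$ bounds are needed. There is also a harmless slip: on $[t_0-1,t_0]$ one has $C(t)\ge C(t_0)$, not $C(t)\ge C(t_0-1)$, or more simply $u_i(\cdot,t)\ge u_i(\cdot,t_0)$ since $u_t<0$.
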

    \begin{proof}
        By the Ricci flow equation, we have
        \begin{equation}\label{wieqn}
            \partial_t w_i(\psi,t) - w_i^3\nabla^0\cdot\parent{w_i^{-1}\nabla^0w_1} + w_i^3 = 0
        \end{equation}
        where $w_i = (u_i)^{-1/2}$. $w_i$'s satisfy the boundary conditions
        $$\partial_{\nu_0}w_i = -k.$$
        In addition, we also know that $\partial_\psi u_i$ are bounded uniformly-in-$i$: $\partial_\psi u_i\geq 0$ and the pinching estimate (\ref{pinchingratio}) yields
        $$0\leq \frac{u_i(\psi,t) - u_i(\psi',t)}{\psi - \psi'} \leq u_i(\psi',t)\cdot \frac{\frac{\cos^{-4}(\psi/2)}{\cos^{-4}(\psi'/2)} - 1}{\psi - \psi'}.$$
        Taking the limit $\psi \ra \psi'$, we obtain a uniform gradient estimate of form
        \begin{equation}\label{gradu}
            |\partial_\psi u_i(\psi',t)| = \partial_\psi u_i(\psi',t) \leq u_i(\psi',t)\cdot 2\tan(\psi'/2) \leq C(t)
        \end{equation}
        where $C(t)$ is a constant obtained from the uniform upper bound of $u_i$'s and independent of $i$, $\psi'$. Therefore, in any compact interval $I\subset (-\infty,0)$, the family of functions $w_i$ satisfy parabolic equations with oblique boundary condition, of uniformly bounded coefficients, which are guaranteed by the bounds of $w_i$ in any compact interval obtained in {Lemma \ref{C0bound}}. An application of parabolic De Giorgi-Nash-Moser estimate (cf. \cite{Lieberman1996}) on (\ref{wieqn}) yield
        $$||w_i||_{\delta,\frac{\delta}{2};K\times I} \leq C\big(\sup_{\old}|u_i|,\sup_{\old}|\nabla^0u_i|, k, I\big)$$
        for some $\delta\in (0,1)$, for any compact interior subset $K$ of $\old$. Since we have uniform bounds on $u_i$ and $\nabla^0 u_i = \partial_\psi u_i$, the right hand side of above inequality is in fact independent of $i$. Then $w_i$'s actually satisfy parabolic Neumann boundary problems with uniformly bounded parabolic H\"older coefficients, from which we obtain the global Schauder estimate
        $$||w_i||_{C^{2+\delta,1+\frac{\delta}{2}}(\old\times I)} \leq C(||w_i||_{\delta,\frac{\delta}{2};\old\times I} + ||w_i||_{\infty,I} + ||\nabla^0 w_i||_{\infty,I}, k) \leq C_0(I)$$
        where $C_0$ is a constant only depending on the uniform $C^0,C^1$ bounds of $u_i$'s on the compact interval $I$. Above uniform bound on the second order Schauder norms of $w_i$'s yields the uniform curvature bound of $g_i := u_ig_0$ on each compact time interval $I$.
    \end{proof}
    We can finally claim the existence of a rotationally symmetric ancient solution by showing that a limit flow $$g_\infty(t) := \lim_{i\ra \infty}g_i(t)$$
    exists and is also a smooth solution to the equation (\ref{2Dsym}) on $D\times (-\infty,0)$.
    \begin{proposition}[Existence]\label{existence}
        There exists a positively curved ancient solution $g_\infty(t)$ to the rotationally symmetric Ricci flow with boundary (\ref{2Dsym}) which smoothly converges to a flat disk of radius $1/k$ as $t\ra -\infty$ and has forward half-spherical singularity.
    \end{proposition}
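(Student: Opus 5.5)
The plan is to extract a limit of the old solutions $g_i(t)=u_i(\cdot,t)g_0$, $t\in[T_i,0)$, and then read off its properties from estimates that are uniform in $i$.

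\emph{Compactness.} Fix a compact interval $I\subset(-\infty,0)$. Lemma \ref{C0bound} and the pinching estimate (\ref{pinching}) give uniform upper and lower bounds $C(I)^{-1}\le u_i\le C(I)$ on $\old\times I$. The gradient bound (\ref{gradu}) together with the Schauder estimate from the proof of Lemma \ref{C0curvbound} gives uniform $C^{2+\delta,1+\delta/2}$ bounds for $w_i=u_i^{-1/2}$ on $\old\times I$. Bootstrapping these, by differentiating the equation for $w_i$ and using the Neumann condition $\partial_{\nu_0}w_i=-k$, yields uniform $C^{m}$ bounds for every $m$. Arzel\`a--Ascoli and a diagonal argument over an exhaustion of $(-\infty,0)$ then produce a subsequence with $u_i\to u_\infty$ in $C^\infty_{loc}(\old\times(-\infty,0))$.

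\emph{Properties of the limit.} The limit $u_\infty$ solves (\ref{2Dsym}) with boundary curvature $k$ and is rotationally symmetric. Passing to the limit in Lemmas \ref{confloc} and \ref{curvloc} gives $\partial_\psi u_\infty\ge0$ and $\partial_\psi R_\infty\ge0$. Passing to the limit in Lemma \ref{scalandconformal} gives:
\begin{itemize}
\item $1-k^2u_\infty(\hpi,t)\le e^{2k^2t}$;
\item $R_\infty(\hpi,t)\ge 2/u_\infty(\hpi,t)-2k^2$;
\item $R_\infty(0,t)\le 2/u_\infty(\hpi,t)-2k^2$;
\item $u_\infty(0,t)\le u_\infty(\hpi,t)/(1+k\sqrt{u_\infty(\hpi,t)})^2$.
\end{itemize}

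\emph{Nonflatness.} The limit is not the static flat disk, since it must become singular at $t=0$. The lower barrier from Lemma \ref{maxtime}, together with the area estimate $A_i(t_0)\ge\min\{2\pi|t_0|,\pi/4k^2\}$, keeps the limit nondegenerate for $t<0$. For the upper side, the spherical cap supersolution $U_1$ works as follows. Each $g_i$ vanishes at $t=0$, so comparison from the initial time shows $u_i(t)\le U_1(t)$ uniformly near $t=0$. Passing to the limit gives $u_\infty(t)\le U_1(t)\to0$, so the area of $g_\infty(t)$ tends to $0$ as $t\to0$, and the limit has nonconstant positive curvature. Since $R_\infty\ge0$ and $R_\infty\not\equiv0$, the strong maximum principle with the Robin condition $\partial_\nu R=kR$ gives $R_\infty>0$. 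Once curvature positivity is known, \cite[Theorem 1.1]{CoMu19} identifies the forward singularity as half-spherical.

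\emph{Backward limit.} The inequality $1-k^2u_\infty(\hpi,t)\le e^{2k^2t}$ gives $u_\infty(\hpi,t)\to k^{-2}$ as $t\to-\infty$. Combining this with the pole bound and monotonicity pins $u_\infty$ between $u(0,t)$ and $u(\hpi,t)$. To identify the actual backward limit, integrate $\partial_t u=-Ru$ over time. The two-sided curvature bounds imply $R_\infty\to0$ uniformly, since
\[
R_\infty(\hpi,t)\le C\big(2/u_\infty(\hpi,t)-2k^2\big)
\]
fails to be automatic. To obtain it I would use instead the Gauss--Bonnet identity $\int R/2+kL=2\pi$ together with $I(g(t))$ decreasing. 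By Lemma \ref{isop} the isoperimetric ratio is monotone, so it has a backward limit; convergence of $L\to 2\pi/k$ then forces $\int R\,dA\to0$. A Harnack-type argument, or the boundary bound from $R_\psi\ge0$ combined with the curvature ODE, then upgrades this to uniform decay of $R_\infty$. Hence $u_\infty(\cdot,t)$ converges, by the uniform derivative bounds, to a stationary flat metric with boundary curvature $k$ and value $k^{-2}$ at $\psi=\pi/2$. Among such metrics, the rotationally symmetric ones with this boundary value are exactly $u_{\fl}$.

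\emph{Main obstacle.} I expect the hardest step to be this smooth backward convergence, specifically the uniform decay of $R_\infty(\hpi,t)$ as $t\to-\infty$. Higher-derivative convergence is then routine from the uniform estimates.
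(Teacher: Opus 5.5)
Your overall route is the paper's, whose proof is only a few lines. Uniform $C^0$/$C^1$ bounds and Schauder estimates give a limit on compact subintervals of $(-\infty,0)$. The estimates of Lemma \ref{scalandconformal}, together with the monotonicity $\partial_t u=-Ru\le 0$, identify the backward limit, and \cite{CoMu19} gives the half-spherical singularity. You extract the limit by Arzel\`a--Ascoli directly in the fixed conformal gauge, where the paper uses Shi's estimates and Hamilton's compactness theorem; that is a legitimate and somewhat cleaner substitute, since rotational symmetry and the boundary condition pass to the limit without diffeomorphisms. However, the step showing the limit is not the static flat disk is wrong as written, and the backward-limit step is incomplete.

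\textbf{The comparison with $U_1$.} Comparison from the initial time does not give $u_i(t)\le U_1(t)$ near $t=0$. Lemma \ref{maxtime} can only be started from a time $s_i$ with $U(\cdot,\tau_i)\le U_1(s_i)$. It then yields $u_i(t)\le U_1(t+s_i-T_i)$, and since $u_i$ dies no later than the cap, $s_i\le T_i$. Because $U_1$ is decreasing in time, this bound is weaker than $u_i(t)\le U_1(t)$. Since $U_1$ is only a strict supersolution, nothing forces $T_i-s_i$ to be small or even bounded, so you get no bound near $t=0$ that is uniform in $i$.

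The fix is elementary. Gauss--Bonnet gives $A_i'(t)=-4\pi+2kL_i(t)\ge -4\pi$, and $A_i(t)\to 0$ as $t\to 0$, so $A_i(t)\le 4\pi|t|$ uniformly in $i$. Hence the limit has area at most $4\pi|t|$, it is not static, and your strong maximum principle argument for $R_\infty>0$ then goes through. Alternatively, suppose $u_\infty(\cdot,t_0)=u_{\fl}$. Then for any $S>|t_0|$ and $i$ large, $u_i(\cdot,t_0)\ge U_2(-S)$, so by Lemma \ref{maxtime} $u_i$ would survive past $t=0$, a contradiction.

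\textbf{The backward limit.} Your sentence saying the two-sided bounds imply decay ``since'' the needed upper bound fails is self-contradictory, and the isoperimetric ratio plays no role. You have $L(t)=2\pi\sqrt{u_\infty(\hpi,t)}$, so $L\to 2\pi/k$, and hence $\int R\,dA=4\pi-2kL\to 0$, as soon as $u_\infty(\hpi,t)\to k^{-2}$. For that you need, besides $1-k^2u\le e^{2k^2t}$, the upper bound $u_\infty(\hpi,t)\le k^{-2}$. This follows from $U(\hpi,\tau_i)<k^{-2}$ and $\partial_t u_i<0$.

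The upgrade you flag as the main obstacle needs no Harnack argument, but it does need bounds uniform as $t\to-\infty$, and your compactness step only states bounds on compact intervals. These uniform bounds do hold. For $t\le -1/(8k^2)$, Lemma \ref{C0bound} gives $u_i\ge 1/(16k^2)$. Together with $u_i\le k^{-2}$ and \eqref{gradu}, the Schauder bounds of Lemma \ref{C0curvbound} and their bootstrapped versions are then independent of $t$ on $(-\infty,-1/(8k^2)]$. So $u_\infty(\cdot,t)$, which is monotone in $t$ and bounded, converges smoothly as $t\to-\infty$ to some $u_{-\infty}$ with $\int R[u_{-\infty}]\,dA=\lim_{t\to-\infty}Q(t)=0$. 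Thus $u_{-\infty}$ is flat, and your classification of rotationally symmetric flat metrics with boundary curvature $k$ gives $u_{-\infty}=u_{\fl}$.
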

    \begin{proof}
        Combine Lemma \ref{C0curvbound} and Shi's derivative estimate (cf. \cite{Gia16b}) to obtain uniform curvature derivative estimates of all order in every compact interval $I\subset (-\infty,0)$. Then Hamilton's compactness theorem (provided by \cite{CoMu19}; also see \cite{Gia16b}) yields an ancient flow which satisfies the same {Ricci flow with boundary} equation \eqref{2DRF}. The constructed ancient solution backward converges to a flat disk metric $g_{\fl}$ by the estimates in Proposition \ref{scalandconformal} and the monotonicity \eqref{2Du} of the conformal factor $u(\cdot, t)$ along the flow.
    \end{proof}

\subsection{Properties of ancient solutions converging to a flat disk.}\label{subsec_anyancient} Now let $\big(g(t) = u(\psi,t)g_0\big)_{\{t<0\}}$ denote \textit{any} rotationally symmetric ancient solution to {Ricci flow with boundary} (\ref{2DRF}) with the flat disk of radius $1/k$ as the smooth backward limit. We investigate the properties of $g(t)$. 

\begin{lemma}[Location of conformal factor/curvature extremum]\label{anysolngrad}
    Let $g(t) = u(\psi,t)g_0$ denote \textit{any} rotationally symmetric, positively curved ancient solution to (\ref{2Dsym}) with the flat disk of radius $1/k$ as the backward limit. Then for any $t<0$, we have
    $$u_\psi(\psi,t),\; R_\psi(\psi,t) \geq 0.$$
\end{lemma}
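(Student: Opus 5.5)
The plan is to rerun the maximum principle arguments of Lemma \ref{confloc} and Lemma \ref{curvloc} on intervals $[t_0,t]$ and then send $t_0\to-\infty$. These lemmas need a sign condition at an initial time. Here there is no initial time, so the sign has to be recovered from the flat backward limit. Since $u(\cdot,t)\to u_{\fl}$ smoothly as $t\to-\infty$, every quantity built from finitely many derivatives of $u$ converges uniformly on $\old$ to its flat value. In particular this applies to $q_u:=u_\psi/(u\sin\psi)$ and $q_R:=R_\psi/\sin\psi$, both smooth by rotational symmetry.

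\textbf{The conformal factor.} For the flat disk, \eqref{eq_flat} gives $(\log u_{\fl})_\psi=2\sin\psi/(1+\cos\psi)$, so
\[
q_{u_{\fl}}=\frac{2}{1+\cos\psi}\geq 1 .
\]
By smooth convergence there is $T_0$ such that $q_u(\cdot,t)>0$ on $\old$ for all $t\le T_0$. I then restart the flow at any $t_0\le T_0$. The function $q_u$ satisfies the uniformly parabolic equation on $\overline{\mathbb{S}^4_+}$ from the proof of Lemma \ref{confloc}, with Dirichlet data $2k\sqrt{u}>0$ on the boundary. The maximum principle there then gives $q_u\ge0$, and hence $u_\psi\ge0$, for all $t<0$.

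\textbf{The curvature.} This is the main obstacle, because the flat limit has $R\equiv0$. Thus $q_R\to0$ as $t\to-\infty$, and its sign cannot be read off from the limit. Instead I use the sign of the zeroth-order coefficient in the $q_R$ equation from the proof of Lemma \ref{curvloc}:
\[
c(\psi,t):=-\frac{2}{u}-\frac{4u_\psi\cot\psi}{u^2}+2R .
\]
The middle term is smooth at the pole, since $u_\psi=O(\psi)$. By the previous step $u_\psi\ge0$, so that term is $\le0$. Also $R\to0$ and $u\to u_{\fl}\le k^{-2}$. Hence there are $c_0>0$ and $T_1$ with $c\le -c_0$ on $\old\times(-\infty,T_1]$.

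Next I check the boundary values. Writing the Robin condition in the $\psi$ variable gives
\[
q_R(\tfrac{\pi}{2},t)=R_\psi(\tfrac{\pi}{2},t)=k\sqrt{u}\,R>0 ,
\]
using positive curvature. So a negative minimum of $q_R$ can only be interior. Set $m(t):=\min\{\min_{\old} q_R(\cdot,t),0\}$. At a negative interior minimum the first- and second-order terms have the favorable sign, after viewing the operator on $\overline{\mathbb{S}^4_+}$ as in Lemma \ref{confloc}. This leaves
\[
q_t\ge c\,q\ge c_0|q| .
\]
In the weak (Hamilton) sense this gives $\frac{d}{dt}|m|\le -c_0|m|$ for $t\le T_1$. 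Therefore
\[
|m(t)|\le |m(t_0)|\,e^{-c_0(t-t_0)}\qquad\text{for } t_0<t\le T_1 .
\]
Smooth convergence makes $|m(t_0)|$ bounded, and in fact tending to $0$. Letting $t_0\to-\infty$ yields $m\equiv0$ on $(-\infty,T_1]$.

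For $t>T_1$, I apply the argument of Lemma \ref{curvloc} starting from the data at $T_1$. This is valid because $R$ stays bounded on compact subintervals of $(-\infty,0)$, and the boundary data remain positive. It gives $R_\psi\ge0$ for all $t<0$.

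The delicate points are two. First, one must make sure that smooth backward convergence, including third derivatives of $u$ near the pole, really controls $q_R$ uniformly. Second, one must justify the pole regularity of the singular coefficients when the equation is viewed on $\overline{\mathbb{S}^4_+}$.
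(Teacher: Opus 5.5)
Your proposal is correct. The conformal-factor half is essentially the paper's argument: the paper also reads off $q_u\to 2/(1+\cos\psi)\ge 1$, gets $q_u>0$ for $t\le T'$, and runs the maximum principle forward from there with a weight $e^{-\lambda t}$.

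For the curvature half you take a genuinely different route. The paper works with $W=e^{-\lambda t}(q_R+\epsilon)$, where $\lambda=4\sup|R|$, starting at times where $|q_R|\le\epsilon/2$. It asserts that $W$ never attains a negative minimum and then lets $\epsilon\to 0$. It never uses the sign of the zeroth-order coefficient $c$. You instead show that $c\le -c_0<0$ on $(-\infty,T_1]$, using $u\to u_{\fl}\le k^{-2}$, $u_\psi\cot\psi\ge 0$ and $R\to 0$. This gives forward exponential decay of the negative part, $|m(t)|\le |m(t_0)|e^{-c_0(t-t_0)}$. Letting $t_0\to-\infty$ then needs only boundedness of $q_R$ at early times, not $q_R\to 0$. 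After $T_1$, the standard argument of Lemma \ref{curvloc} on compact intervals takes over.

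What your version buys is a genuine mechanism for the degenerate ``initial data at $t=-\infty$''. Write $\mathcal{L}$ for the first- and second-order part of the $q_R$ equation. The paper's shifted quantity actually satisfies $W_t=\mathcal{L}W+(c-\lambda)W-c\,\epsilon e^{-\lambda t}$, and the last term is harmless only where $c\le 0$. The paper's displayed computation replaces $e^{\lambda t}W_t$ by $q_t-\lambda q$ and so does not see this term. The negativity of $c$ near $t=-\infty$ that you isolate is exactly the missing ingredient there. Splitting at $T_1$ and then continuing with the ordinary maximum principle is the clean way to organize the proof.

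Two harmless remarks. First, differentiating $u^{-1}\Delta_0R$ gives the coefficient $-u_\psi/u^2$ rather than $-2u_\psi/u^2$ in the $q_R$ equation; this does not matter, since only the sign of $c$ is used. Second, $q_R(0,t)=R_{\psi\psi}(0,t)$ involves four derivatives of $u$, all controlled by the assumed smooth convergence.
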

\begin{proof}
    Define the function $q$ is chosen as in the proof of {Lemma \ref{confloc}}: $$q(\psi,t) := \frac{u_\psi}{u\sin\psi}.$$
    Note that, as we have already assumed that the uniform smooth backward limit of $u(\psi,t)$ is a flat disk, we have the uniform convergence
    \begin{equation}\label{bconv}
        \lim_{t\ra -\infty} q(\psi,t) = \frac{2}{1+\cos\psi} \geq 1.
    \end{equation}
    Define an auxillary function
    $$W(\psi,t) = W_{\lda}(\psi,t) := q(\psi,t)e^{-\lda t},$$
    where $\lda>0$ is a constant to be determined. By (\ref{bconv}), we know that there exists a negative constant $T'>-\infty$ chosen independent of $\lda$ satisfying $W(\psi,t) >0$ for all $t\leq T'$. To prove that $u_\psi \geq 0$ for all time $t<0$, we shall argue by contradiction. Fix some $\tau<0$ for a moment. Suppose the contrary: say there exists some point $p_0 = (\psi_0,t_0) \in [0,\pi/2)\times (-T',\tau])$ such that $W(\psi_0,t_0) <0$. Note that the boundary point $\psi = \pi/2$ is not of concern as $W(\pi/2,t)$ remains positive throughout the flow. Also assume that $W(\psi_0,t_0)$ is a spatial minimum of $W$ at $t=t_0$. Choose 
    $$\lda = \lda_{\tau} := 1+4\sup_{(\psi,t)\in [0,\pi/2]\times (-\infty,\tau]}\frac{|q(\psi,t)|}{|u(\psi,t)|} <\infty.$$ 
    Then at the point $p_0=(\psi_0,t_0)\in [0,\pi/2)\times (T',0)$, $W$ satisfies
    \begin{align*}
        0&\geq W_t(\psi_0,t_0) \\
        &= q_t(p_0)e^{-\lda t_0} - \lda q(p_0)e^{-\lda t_0}\\
        &= e^{-\log u(p_0)}\big(\Delta_{S^4_+}W(p_0) - \sin\psi_0\cdot q(p_0)W_\psi(p_0) \\
        &\quad- 2\cos\psi_0\cdot q(p_0)W(p_0)\big) - \lda W(p_0)\\
        &\geq \bigg(\frac{-2\cos\psi_0\cdot q(p_0)}{u(p_0)} - \lda\bigg)W(p_0) >0
    \end{align*}
    which leads to a contradiction. Therefore, the function $W_{\lda(\tau)}(\psi,t)$ does not obtain negative parabolic minimum within the interval $[T',\tau] \subset (-\infty,0)$. Letting $\tau\ra 0$ (therefore $\lda(\tau) \ra \infty$ if necessary), we see that $q(\psi,t)$ cannot attain a negative parabolic minimum in all subintervals $I\subsetneq (-\infty,0)$. This means that $q(\psi,t)$ must always be nonnegative, which implies the nonnegativity of $u_\psi(\psi,t)$ for all $t<0$.
    
    For the scalar curvature derivative $R_\psi$, we define smooth functions
    $$q(\psi,t) := \frac{R_\psi}{\sin\psi},\; W_\lda^\eps(\psi,t) := e^{-\lda t}(q(\psi,t)+\eps)$$
    for some positive constants $\eps>0$ and $\lda = \lda_\eps>0$ to be assigned. Since we have prescribed the smooth backward limit, $q \ra 0$ uniformly as $t\ra -\infty$; and on the boundary $\psi = \pi/2$, $q(\pi/2,t) = R_\psi(\pi/2,t) \geq 0$ for all $t<0$, given by the boundary condition of our flow equation \eqref{2DRF}. Then for any $\eps>0$, we can choose $\tau_\eps \in (-\infty,-2)$ so that $|q(\psi,t)|\leq \eps/2$ for all $t<\tau_\eps$. Also, for $t<\tau_\eps$, $W^\eps_\lda(\psi,t) >0.$ Define $$\lda = \lda_{\eps} := 4\sup_{t\leq \tau_\eps^{-1}} |R(t)|.$$
    
    We argue as in the case of $u_\psi$: assume for the contrary that the function $W_\lda^\eps$ achieves a negative parabolic minimum 
    $$W_\lda^\eps(\psi_0,t_0) = \min_{[\tau_\eps,t_0]\times [0,\pi/2]} W_\lda^\eps <0$$
    on the interval $[\tau_\eps ,t_0] \subset I_\eps := [\tau_\eps,\tau_\eps^{-1}]$ for the first time.

    By the boundary condition of $R$, $\psi_0 \neq \pi/2$; the smooth rotational symmetry of the metric and the positive curvature condition yields $\psi_0\neq 0$. Therefore $(\psi_0,t_0)$ must be a strictly interior parabolic minimum point of $W_\lda^\eps$ in $[0,\pi/2]\times (-\infty,\tau^{-1}]$. Consequently, at $p_0 = (\psi_0,t_0)$ it holds
    \begin{align*}
        0\geq &~ e^{\lda t}\partial_t (e^{-\lda t}q)(p_0) = \partial_tq(p_0)-\lda q(p_0) \\
        = &~u(p_0)^{-2}\big(\Delta_{S^4_+}q(p_0) - 2q(p_0) -2\frac{u_\psi}{u}(p_0)(q_{\psi}(p_0) + 2q(p_0)\cot \psi_0) \big) \\
        &+ 2R(p_0)q(p_0)- \lda q(p_0)\\
        \geq &~ (2R(p_0) - \lda)q(p_0) >0,
    \end{align*}
    leading to contradiction. Here we have used the fact $u_\psi \geq 0$ proven above. Therefore any parabolic minimum of $W_\lda^\eps$ in finite interval cannot attain negative value within $I_\eps$. Now let $\eps \ra 0$. Then $\tau = \tau_\eps \ra -\infty$, $I_\eps \ra (-\infty,0)$ and we deduce that $q(\psi,t) \geq 0$ for all $\psi$ and $t\in (-\infty,0)$.
\end{proof}
Using the nonnegativity of $u_\psi$ and $R_\psi$, we also deduce the following upper bound on $u(0,t)$:
\begin{lemma}\label{anysolnbound}
For any ancient solution $g(t) = u(t)g_0$ to \eqref{2DRF} it holds
    $$u(0,t)\leq \frac{u(\hpi,t)}{\parent{1+k\sqrt{u(\hpi,t})}^2}.$$
\end{lemma}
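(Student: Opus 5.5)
We follow the same spherical-cap comparison used for the third inequality of Lemma \ref{scalandconformal}. There the only inputs about the old solutions were rotational symmetry, $u_\psi\ge 0$, $R_\psi\ge 0$, and the fact that the curvature is maximized on the boundary. For the present ancient solution these are supplied by Lemma \ref{anysolngrad}. Implicitly, as in Section \ref{subsec_anyancient}, $g(t)$ is rotationally symmetric and positively curved with the flat disk as backward limit.

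Fix $t<0$ and set $a:=u(\hpi,t)$. We use the cap
$$U_k(\psi):=\frac{a}{(1+k\sqrt a\cos\psi)^2},$$
which has boundary geodesic curvature $k$, satisfies $U_k(\hpi)=a=u(\hpi,t)$, and has constant curvature $R_{U_k}=\frac{2}{a}-2k^2$. Both $U_k$ and $u(\cdot,t)$ satisfy $\partial_{\nu_0}(1/\sqrt{\cdot})=-k$, so they touch tangentially at $\psi=\hpi$.

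\textbf{Step 1 (local ordering at the boundary).} We show that $U_k\ge u(\cdot,t)$ near $\psi=\hpi$. Suppose instead that $U_k$ lies strictly below $u$ somewhere near $\hpi$. Consider the one-parameter family
$$U_{k,k'}(\psi)=\Bigl(\sqrt{a^{-1}-k^2+k'^2}+k'\cos\psi\Bigr)^{-2},$$
which all have the same constant curvature $\frac{2}{a}-2k^2$ and agree with $u$ at $\psi=\hpi$. These caps increase as $k'$ decreases, so we can choose $k'<k$ for which $U_{k,k'}$ first touches $u$ from above at an interior point $\psi_0<\hpi$. Lemma \ref{scalarcomp} gives $R(\psi_0,t)\ge \frac{2}{a}-2k^2$. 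At the boundary, the ordering $u\ge U_k$ combined with Lemma \ref{scalarcomp} gives $R(\hpi,t)\le\frac{2}{a}-2k^2$. Since $R_\psi\ge 0$, $R$ must then be constant on $[\psi_0,\hpi]$. This contradicts strict monotonicity, or forces $u=U_{k,k'}$ on an interval, which is incompatible with the boundary condition because $k'\ne k$. We deduce $R(\hpi,t)\ge \frac{2}{a}-2k^2$ and $U_k\ge u$ near $\hpi$.

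\textbf{Step 2 (global ordering).} If $U_k<u$ somewhere, we enlarge the cap within the family $U_{k,k'}$ (with $k'\le k$), starting from $U_k$, until it first touches $u$ from above at some interior $\psi_0$. Lemma \ref{scalarcomp} gives $R(\psi_0,t)\ge \frac{2}{a}-2k^2$. Monotonicity of $R$ then gives $R\ge \frac{2}{a}-2k^2$ on $[\psi_0,\hpi]$. Integrating the ODE $2-\Delta_0\log u=uR$ radially, this pins $u$ below $U_k$ on that interval, contradicting the first-touch configuration together with Step 1. Hence $U_k\ge u(\cdot,t)$ everywhere, and evaluating at $\psi=0$ gives $u(0,t)\le \frac{a}{(1+k\sqrt a)^2}$.

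\textbf{Main obstacle.} The hardest part is making the contact arguments rigorous without strict inequalities $R_\psi>0$, since Lemma \ref{anysolngrad} only gives $\ge 0$. If needed, we would first upgrade to strict monotonicity away from the pole using the strong maximum principle for $q=R_\psi/\sin\psi$, which has positive boundary values $q(\hpi,t)=kR>0$. We would also handle the degenerate equality cases with a small perturbation of $k'$. We expect the flat case $R\equiv 0$ to be excluded by positive curvature.
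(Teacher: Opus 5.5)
Your route is the paper's: the paper proves this lemma by pointing back to the cap-sliding argument of Lemma \ref{scalandconformal}, and you correctly substitute Lemma \ref{anysolngrad} for the monotonicity inputs.

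There is one factual slip. For $k'\neq k$ the caps $U_{k,k'}$ do \emph{not} pass through $u(\frac{\pi}{2},t)$. In fact $U_{k,k'}(\frac{\pi}{2})=(a^{-1}-k^2+k'^2)^{-1}>a$ for $k'<k$, and this strict inequality is exactly what makes the first contact interior. With the property you stated, every cap would touch $u$ at the boundary. To get a genuine first contact, slide from $k'=0$, where $U_{k,0}\equiv a/(1-ak^2)>a\geq\max u(\cdot,t)$. This uses $u_\psi\geq 0$, and $ak^2<1$ holds since $u$ decreases from $u_{\mathrm{flat}}$. Contact then occurs at some $k^*\in(0,k)$ and $\psi_0<\frac{\pi}{2}$.

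The real gap is in Step 2. There you claim that $R\geq R_0:=\frac{2}{a}-2k^2$ on $[\psi_0,\frac{\pi}{2}]$, together with matching Cauchy data at $\frac{\pi}{2}$, pins $u$ below $U_k$ ``by integrating the ODE radially.'' That does not work. With $z=\log u-\log U_k$ one has $(\sin\psi\, z_\psi)_\psi=\sin\psi\,[R_0(U_k-u)-u(R-R_0)]$. The first term has the sign of $-z$, so it opposes the conclusion, and a single integration yields no sign. You need a Sturm-type comparison; it works because the cap is smaller than a hemisphere and so has no conjugate point.

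An elementary version goes as follows. Let $s$ be the distance to $\partial\mathbb{D}$, $\ell=\sqrt{u}\sin\psi$ and $K_0=R_0/2=a^{-1}-k^2$. Then $\ell(0)=\sqrt a$, $\ell'(0)=-k\sqrt a$, and $\ell'<0$ by $u_\psi\geq 0$. Moreover $(\ell'^2+K_0\ell^2)'=2\ell\ell'(K_0-K)\geq 0$ wherever $K\geq K_0$, while $\ell'^2+K_0\ell^2\equiv 1$ on the cap. Hence $|\ell'|\geq\sqrt{1-K_0\ell^2}$ on that region. Since $\frac{d}{d\ell}\log\tan\frac{\psi}{2}=\frac{1}{\ell|\ell'|}$, integrating down from $\ell=\sqrt a$ gives $\psi_u(\ell)\geq\psi_{\mathrm{cap}}(\ell)$, i.e.\ $u\leq U_k$ on $[\psi_0,\frac{\pi}{2}]$. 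This contradicts $u(\psi_0)=U_{k,k^*}(\psi_0)>U_k(\psi_0)$ directly.

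With this comparison, Step 1 becomes superfluous. That helps, because Step 1 applies Lemma \ref{scalarcomp} at the boundary under ``$u\geq U_k$ near $\frac{\pi}{2}$,'' which is not the negation you assumed. The comparison also removes any need for strict monotonicity of $R$. The paper asserts the same comparison without proof, so this is the step both arguments need to fill in.
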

\begin{proof}
    Identical to the proof of {Lemma \ref{scalandconformal}}.
\end{proof}

\section{Uniqueness of the flat disk backward limit}\label{flatbackwardlimit}
In this section, we prove that an ancient, positively curved Ricci flow $(\old,\partial \bD,g(t))_{t<0}$ with uniformly bounded diameter
\begin{equation}\label{bdddiam}
    \sup_{t<0} \text{diam} (\old,g(t))=\lim_{t\to-\infty}\text{diam} (\old,g(t)) \leq D<\infty
\end{equation}
must converge smoothly (up to the boundary) to a flat Euclidean disk $(\old,g_{\fl})$ of boundary curvature $k>0$ as $t\to -\infty$.
\begin{proposition}\label{flatdisklimit}
Let $(M^2, \partial M^2, g(t))$ be a compact ancient two-dimensional Ricci flow  with  boundary $\partial M^2$ such that  $(M^2, g(t))$ has nonnegative Gaussian curvature and bounded diameter $$\sup_{t}\operatorname{diam}(M^2, g(t))\leq D<+\infty,$$ and $\partial M^2$ has constant geodesic curvature $k>0$.  Then, up to a time independent diffeomorphism, the conformal factor $u$ smoothly converges to the standard conformal factor 
$\frac{1}{k^2(1+\cos \psi)^2}$ of the flat
disk $(\old, g_{\fl})$ whose boundary has constant geodesic curvature $k$.

\end{proposition}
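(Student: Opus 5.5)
By Gauss--Bonnet, $M^2$ is a closed disk, and by \cite[Theorem 1.1]{CoMu19} we may write $g(t)=u(t)g_0$ on $\old$, with $u$ solving \eqref{2Du}. The plan has four steps: extract monotone quantities that converge as $t\to-\infty$, show that the total curvature vanishes in the limit, take a smooth subsequential backward limit that is a flat disk, and finally remove the conformal gauge ambiguity.

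\textbf{Monotone quantities.} Since $u_t=-Ru\le 0$, the conformal factor is nonincreasing in $t$, and hence $u_{-\infty}:=\lim_{t\to-\infty}u(\cdot,t)$ exists pointwise in $(0,\infty]$. Gauss--Bonnet gives
\[
A'(t)=-\int R\,dg_t=-4\pi+2kL(t),
\]
and $L'(t)=-\tfrac12\int_{\partial\bD}R\,ds\le 0$. The diameter bound gives $L(t)\le 2\pi D$ and $A(t)\le CD^2$. Since $A$ is nonincreasing in $t$ and bounded, it converges as $t\to-\infty$, and therefore
\[
\int_{-\infty}^{0}\int R\,dg\,dt<\infty .
\]
Because $L$ is monotone and bounded, $L(t)\to L_{-\infty}$, so $\int R\,dg_t=4\pi-2kL(t)$ converges. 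The integrability in time forces this limit to be zero, i.e. $L_{-\infty}=2\pi/k$. Thus the total curvature tends to zero, and the boundary integral $\int_{\partial\bD}k\,ds\to 2\pi$.

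\textbf{Curvature bounds and a smooth limit.} Next I need pointwise curvature bounds as $t\to-\infty$. I would use a Harnack-type argument for $\partial_tR=\Delta R+R^2$ with the Robin condition $\partial_\nu R=kR$. The trace Harnack inequality of Hamilton for ancient surface flows, $\partial_t\log R-|\nabla\log R|^2\ge 0$ (valid for ancient solutions), should carry over with the boundary condition; I would verify this via the maximum principle, with the boundary term having a good sign because $k>0$. Combining it with bounded diameter and vanishing total curvature yields $\sup R(t)\to 0$. Alternatively, one can argue by contradiction via blow-up at points where $R\cdot\operatorname{diam}^2$ stays large. Rescaled limits would be ancient $\kappa$-type flows, either complete without boundary (where the classification of \cite{DHS12} and its noncompact analogues applies) or half-planes with geodesic boundary. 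Each of these forces a definite amount of total curvature, contradicting $\int R\to 0$. I expect this to be the main obstacle: ruling out curvature concentration near the boundary, where compactness requires the boundary version of Hamilton's theorem \cite{CoMu19,Gia16b}. Given $\sup R\to 0$, together with an injectivity-radius or noncollapsing bound coming from $A\ge$ const (since $A$ is nonincreasing, $A(t)\ge A(0)$), Shi-type estimates \cite{Gia16b} give smooth subsequential convergence of $g(t_j)$ to a flat metric on the disk with boundary geodesic curvature $k$ and length $2\pi/k$. That limit is the round Euclidean disk of radius $1/k$.

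\textbf{Removing the conformal gauge.} Finally I would pass from metric convergence to convergence of $u$. The pullbacks of the flat limits to our fixed coordinates are $u_\fl\circ\varphi$ for conformal automorphisms $\varphi$ of the disk; the Möbius group, modulo rotations, is exactly the two-dimensional family mentioned in the introduction. By monotonicity, $u$ converges pointwise to $u_{-\infty}$ without passing to subsequences. Since $1/\sqrt{u}$ has Neumann data $-k$ and $\Delta_0\log u\to 2$, uniform estimates on compact time slabs upgrade this to smooth convergence, so $u_{-\infty}$ is a single stationary solution. Hence $u_{-\infty}=u_\fl\circ\varphi$ for one fixed Möbius map $\varphi$. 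Pulling back by the time-independent diffeomorphism $\varphi^{-1}$ then gives the stated convergence to $\frac{1}{k^2(1+\cos\psi)^2}$.
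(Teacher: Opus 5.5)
Your outline has the same architecture as the paper's proof. You use monotonicity of $A$ and $L$ plus Gauss--Bonnet to get $\int R\,dA\to 0$ and $L\to 2\pi/k$, as in Lemma~\ref{QL convergence}. (Here $L\le 2\pi/k$ follows at once from Gauss--Bonnet and $R\ge 0$, so your unproved bound $L\le 2\pi D$ is not needed.) You then upgrade to $\sup R\to 0$, apply Hamilton compactness to get the flat disk of radius $1/k$, and use monotonicity of $u$ to fix the M\"obius gauge.

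\textbf{The gap.} The upgrade to $\sup R\to0$ is not yet an argument, and your primary Harnack route does not work as stated. Let $T$ be the unit tangent and $s$ arclength on $\partial\bD$, and set $L=\log R$, so that $\partial_\nu L=k$. Then $\partial_\nu L_t=-\tfrac{k}{2}R$, $\nabla^2L(T,\nu)=-k\,\partial_sL$ and $\nabla^2L(T,T)=\partial_s^2L+k^2$. Hence the Harnack quantity $H=\Delta L+R$ satisfies on the boundary
\[
\partial_\nu H=-2kH+\tfrac32 kR+2k(\partial_sL)^2+2k\,\partial_s^2L+2k^3 .
\]
The term $2k\,\partial_s^2L$ has no sign at a boundary minimum of $H$. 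So the maximum principle does not close, and $k>0$ does not rescue it. The argument would close under rotational symmetry, where $\partial_sL=\partial_s^2L=0$. But symmetry is proved in Section~\ref{symmetrysection} using this very proposition, so you cannot assume it here.

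\textbf{How to close it.} Your fallback blow-up argument is the right idea, and it is what the paper does in Lemma~\ref{pointwisedecay}, but you make it harder than necessary. The limits need not be ancient: point-picking only controls curvature on the rescaled interval $[-1,0]$. No classification such as \cite{DHS12} is needed either. It suffices that the rescaled limit has $R_\infty(p_\infty,0)=1$, hence $\int_U R_\infty\,dA>0$ on a small (half-)ball $U$ around $p_\infty$. Meanwhile $\int R\,dA$ is scale invariant and equals $Q(t_j)=\int R\,dA_{g(t_j)}\to 0$, which is a contradiction.

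\textbf{Gauge step.} Your claim $\Delta_0\log u\to 2$ presupposes a uniform upper bound on $u$ as $t\to-\infty$. Monotonicity only gives the lower bound $u(t)\ge u(t_0)$. The paper is equally brief here, but that lower bound is what should supply the missing control. It makes the identity maps $(\bD,g(t_j))\to(\bD,g_0)$ uniformly Lipschitz, which prevents the conformal parametrizations from degenerating. Indeed, a degenerating M\"obius family of pullbacks of $g_{\fl}$ has conformal factor tending to $0$ away from a boundary point, which contradicts $u(t_j)\ge u(t_0)>0$.
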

By \cite{CoMu19}, we know that the flow $g(t)$ is parametrized by a conformal factor $u(t)g_0 = g(t)$ as well. Recall the Gauss-Bonnet formula
\begin{equation}\label{gaussbonnet}
    \int_\bD R\, dA_{g(t)} + 2k\int_{\partial\bD} d\sigma_{g(t)} = 4\pi\chi(\bD) = 4\pi
\end{equation}
where $dA_{g(t)}$ denotes the area element and $d\sigma_{g(t)}$ the length element on the boundary $\partial\bD$, both induced by $g(t)$. Under the Ricci flow, it follows trivially that the area of the disk $(\bD,g(t))$, $A(t) := \int_\bD dA_{g(t)}$ evolves by the equation
$$\frac{\partial}{\partial t} A(t) = -\int_\bD R\, dA_{g(t)} <0.$$
We first prove that the bounded diameter condition \eqref{bdddiam} plus the positive curvature condition forces the area $A(t)$ to converge backward to a finite number. Denote the length function $L(t) := \int_{\partial\bD} d\sigma_{g(t)} $.
\begin{lemma}\label{QL convergence}
    For all $t<0$, 
    \begin{equation}\label{Abound}
        A(t) \leq D\cdot L(t) \leq \frac{2\pi}{k}D.
    \end{equation}
    Moreover we have
    \begin{equation}\label{curvL1decay}
        \lim_{t\to -\infty}Q(t) := \int_{\bD} R\, dA_{g(t)} =0,\quad \lim_{t\to -\infty}L(t) =\frac{2\pi}{k}.
    \end{equation}
\end{lemma}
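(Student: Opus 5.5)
Two estimates carry the whole argument: the Gauss--Bonnet identity \eqref{gaussbonnet}, which gives $Q(t) + 2kL(t) = 4\pi$, and an area bound obtained from the diameter by a cut-locus (co-area) argument.

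\textbf{The area bound \eqref{Abound}.} Let $d(x)=d_{g(t)}(x,\partial\bD)$ be the distance to the boundary. Along a minimizing geodesic from $x$ to the boundary, $d(x)\le \mathrm{diam}(\bD,g(t))\le D$ by \eqref{bdddiam}. Since $R\ge 0$, Gauss curvature is nonnegative, so the level curves $\{d=s\}$ have length at most $L(t)$. One can see this either through Riccati comparison for geodesics normal to $\partial\bD$ (a boundary with positive geodesic curvature only makes them focus), or by applying Gauss--Bonnet to the sublevel region $\{d\ge s\}$, which shows that the length of $\{d=s\}$ is nonincreasing in $s$. The co-area formula with $|\nabla d|=1$ a.e. then gives
\[
A(t)=\int_0^{D}\mathcal{H}^1(\{d=s\})\,ds\le D\,L(t).
\]
From Gauss--Bonnet, $2kL(t)=4\pi-Q(t)\le 4\pi$, so $L(t)\le 2\pi/k$, which completes \eqref{Abound}.

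\textbf{The limits \eqref{curvL1decay}.} We have $A'(t)=-Q(t)\le 0$, so $A$ is nonincreasing in $t$ and increases as $t\to-\infty$. By \eqref{Abound} it is bounded by $2\pi D/k$, hence $A(t)\to A_{-\infty}<\infty$. Consequently $\int_{-\infty}^{t}Q(s)\,ds = A_{-\infty}-A(t)<\infty$. This integrability alone gives only $\liminf_{t\to-\infty} Q=0$, so we need monotonicity of $Q$, or equivalently of $L$ since $Q=4\pi-2kL$. We have
\[
L'(t)=-\tfrac12\int_{\partial\bD}R\,d\sigma\le 0,
\]
so $L$ is nonincreasing in $t$ and $Q$ is nondecreasing in $t$. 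As $t\to-\infty$, $Q(t)$ therefore decreases to some limit $Q_{-\infty}\ge 0$. If $Q_{-\infty}>0$, then $Q\ge Q_{-\infty}$ for all $t$, and $\int_{-\infty}^{t} Q\,ds=\infty$, contradicting the integrability above. Hence $Q(t)\to 0$, and Gauss--Bonnet gives $L(t)\to 2\pi/k$.

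\textbf{Main obstacle.} The delicate step is the level-set length bound in the co-area argument, because $d$ is only Lipschitz and has a cut locus. I would handle this with the standard comparison: on the domain of the normal exponential map from $\partial\bD$, the length of $\{d=s\}$ is at most $\int_{\partial\bD} J(s)\,d\sigma$, where the Jacobi field satisfies $J''=-KJ\le 0$, $J(0)=1$, $J'(0)=-k<0$. This gives $J\le 1$, and the cut locus has measure zero. Alternatively, the cruder bound $A(t)\le \mathrm{diam}\cdot L$ also follows by integrating over boundary normals up to the cut time in Fermi coordinates.
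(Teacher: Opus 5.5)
Your proposal is correct and follows the paper's proof. The bound $A(t)\le D\,L(t)$ comes from the boundary-normal Jacobi field $J''=-\tfrac12 RJ$, $J(0)=1$, $J'(0)=-k$, which gives $J\le 1$, together with the cut distance being at most $D$. The paper integrates directly as $A=\int_{\partial\bD}\int_0^{c(y)}J\,ds\,d\sigma$ rather than through level sets of $d$; this is the Fermi-coordinate variant you mention. The decay of $Q$ comes from $Q'=k\int_{\partial\bD}R\,d\sigma\ge 0$ combined with $\int_{-\infty}^t Q\,ds\le \frac{2\pi}{k}D$, and Gauss--Bonnet then gives $L\le 2\pi/k$ and $L\to 2\pi/k$ exactly as you say.
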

\begin{proof}
    The second inequality in \eqref{Abound} follows from the Gauss-Bonnet formula \eqref{gaussbonnet} and $R\geq 0$; the second inequality in \eqref{curvL1decay} follows from the first inequality with the Gauss-Bonnet formula. Therefore, it remains to prove the first inequalities in \eqref{Abound} and \eqref{curvL1decay}. For $y\in \partial\bD$, let $N_y$ denote the inward unit normal and let
    $$\Gamma(y,s) = \Gamma_y(s) := \exp_s(sN_y), \; 0\leq s\leq c(y) $$
    where $c(y)$ denotes the boundary cut distance in the direction of $N_y$. Then the boundary normal coordinate maps $(\Gamma_y)_{y\in \partial\bD}$ cover $\bD$ exactly once away from a measure zero set. Pulling back by the coordinate map $\Gamma$, we have
    $$\Gamma^*(dA_{g(t)}) = J(\cdot,s)ds\,d\sigma_{g(t)}(\cdot)$$
    where $J_y(s) = J(y,s)$ is the scalar Jacobi field satisfying
    $$J_y'' + \frac{1}{2}R(\Gamma_y(s))J_y = 0, \; J_y(0)=1,\; J_y'(0) = -k.$$
    Since $J_y'' \leq 0$ and $J_y'(0) <0$, we know that $J_y(s) \leq 1$. Also note that the cut distance $c(y) \leq D = \text{diam}\, (\bD,g(t)).$
    Then the standard coarea formula yields
    $$A(t) = \int_{\partial\bD}\int_0^{c(y)} J(y,s)\; dsd\sigma_{g(t)}(y) \leq D\int_{\partial\bD} d\sigma_{g(t)} \leq DL.$$
    Now we prove the first inequality in \eqref{curvL1decay}. Differentiating the Gauss-Bonnet formula \eqref{gaussbonnet} in $t$, we have
    $$-Q'(t) + 2kL'(t) = -Q'(t) + k\int_{\partial\bD} R\, d\sigma_{g(t)} = 0,$$
    yielding
    $$Q'(t) \geq 0.$$
    Then
    $$\frac{2\pi}{k}D \geq A(t_0) - A(t) = \int_{t_0}^t Q(s)\,ds \geq 0 $$
    for all $t_0<t$. Sending $t_0\to -\infty$, we find that $Q(s) \to 0$ as $s\to -\infty$.
\end{proof}
Now using a standard point-picking argument, we improve the above $L^1$-decay of curvature to a $L^\infty$-decay, i.e.
$$\lim_{t\to-\infty} R_{max}(t) = 0.$$
\begin{lemma}\label{pointwisedecay}
    Under the above assumptions,
    $$\lim_{t\to-\infty} R_{max}(t) = 0.$$
\end{lemma}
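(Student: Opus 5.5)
I will argue by contradiction, using a point-picking and blow-up argument that plays against the $L^1$ decay $Q(t)\to 0$ from Lemma \ref{QL convergence}. Suppose there are $\delta>0$ and times $t_j\to-\infty$ with $R_{max}(t_j)\geq \delta$.

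The first step is to pass from pointwise curvature to curvature in an integrated sense. Since the flow is ancient with $R\ge 0$ and satisfies the boundary condition $\partial_\nu R = kR$, the trace Harnack inequality for surfaces with convex boundary (Hamilton's Harnack, adapted to the Robin boundary condition) should give $\partial_t R\geq 0$ for ancient solutions. An alternative route is the maximum principle for $R$ applied on $[t_0,t]$ with $t_0\to-\infty$. Either way I would like $R_{max}(t)$ to be nondecreasing in $t$. Then on $[t_j-1,t_j]$ we control $R_{max}\le R_{max}(t_j)=:K_j$. If the $K_j$ are unbounded, I rescale parabolically by $K_j$ around the maximum points. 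This produces flows with $R\le 1$ on a backward time interval of length $K_j$ and $R=1$ at the base point. The boundary geodesic curvature becomes $k/\sqrt{K_j}\to 0$, and the rescaled diameter is $\sqrt{K_j}D\to\infty$.

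The second step is noncollapsing, in order to extract a limit. I would use Lemma \ref{C0bound}-type area bounds, or simply the Gauss--Bonnet bound $A\le DL$ together with Bishop--Gromov, which holds because $R\ge0$ and the boundary is convex. These give uniform control of injectivity-type quantities. Hamilton's compactness with boundary from \cite{CoMu19,Gia16b} then yields a limit: an ancient nonnegatively curved flow, possibly with totally geodesic boundary, with $R=1$ somewhere at time $0$. Its total curvature vanishes, because $\int R\,dA$ is scale-invariant and $Q(t_j)\to 0$ on the whole preceding time interval. That forces $R\equiv 0$ in the limit, a contradiction. In the bounded case $K_j\le C$, no rescaling is needed. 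I directly take limits of $g(t_j+s)$, using the diameter bound and the length bound $L\le 2\pi/k$. The limit flow has $Q\equiv 0$, hence $R\equiv0$, contradicting $R_{max}\ge\delta$ at $s=0$.

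The main obstacle is the compactness step near the boundary. We must rule out the situation where the curvature concentrates at the boundary while the injectivity radius degenerates, so that the integral of $R$ over a unit parabolic ball is not bounded below. To handle it, I would first try local Shi-type estimates with boundary from \cite{Gia16b}, which give $C^\infty$ control on a unit neighbourhood of the blow-up point. Since $R(p_j,0)=1$, we then get $R\geq 1/2$ on a ball of radius $c$. Volume noncollapsing of that ball, from Bishop--Gromov together with a lower area bound (the area is nonincreasing and bounded below by the initial $A(t)\ge A(0)$ in backward time, with rescaled diameter $\gtrsim 1$), gives $\int R\,dA\geq c'>0$. This contradicts $Q\to 0$ without requiring a full limit flow. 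If monotonicity of $R_{max}$ is unavailable, I would instead use standard point picking in space-time, choosing $(x_j,t_j)$ so that $R\le 4R(x_j,t_j)$ on a backward parabolic neighbourhood of size comparable to $R(x_j,t_j)^{-1}$.
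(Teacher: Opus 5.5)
Your argument is correct in outline and follows the paper's strategy. You assume $R_{max}(t_j)\ge\delta$ along $t_j\to-\infty$, pick points where $R$ is nearly maximal on a backward parabolic neighbourhood, rescale parabolically, and contradict the scale-invariant decay $Q(t_j)\to 0$ of Lemma \ref{QL convergence}. There are two points of comparison.

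\textbf{Monotonicity of $R_{max}$.} Drop the preliminary claim that $R_{max}$ is nondecreasing. The maximum principle for $\partial_t R=\Delta R+R^2$ controls $R_{max}$ only from above. Here it does not even do that directly, because the condition $\partial_\nu R=kR>0$ allows the maximum to sit on $\partial\bD$ without $\Delta R\le 0$. No trace Harnack inequality for this boundary condition is available either. Your fallback space-time point picking is exactly the paper's iterative time-picking. Each iteration at least doubles the curvature scale $\lambda$ and moves back in time by at most $\lambda^{-1}$. So the picked times stay within $2/\delta$ of $t_j$ and still tend to $-\infty$.

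\textbf{The final contradiction.} The paper passes to a pointed limit via the compactness theorem with boundary and uses lower semicontinuity of $\int_U R$. You instead stay at finite $j$ and argue in three steps:
\begin{enumerate}
\item A boundary Shi-type gradient bound gives $R\ge 1/2$ on $B(p_j,c)$.
\item Bishop--Gromov applies because $K\ge 0$ and $\partial\bD$ is convex, so the disk is an Alexandrov space of nonnegative curvature.
\item Combined with the scale-invariant bound $A(t)/\operatorname{diam}(t)^2\ge A(t_0)/D^2$ for $t\le t_0$, this gives $A(B(p_j,c))\ge c^2A(t_0)/D^2$, hence $Q(t_j)\ge c'>0$.
\end{enumerate}
This route needs only derivative estimates rather than a full limit flow. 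It also makes explicit the volume noncollapsing that the paper's appeal to compactness uses only implicitly; the paper points to a lower bound on boundary length instead.

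The input that matters here is the lower bound $A(t)\ge A(t_0)$, which follows from $A'=-Q\le 0$, together with $\operatorname{diam}\le D$. The bound $A\le DL$ you mention at the start of that step is an upper bound and contributes nothing to noncollapsing. Both routes ultimately rest on the boundary derivative estimates of \cite{Gia16b}.
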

\begin{proof}
    Suppose the above is false. Then there exists a constant $\eps>0$ and a sequence of time $t_j\to -\infty$ such that
    $$\lda_j := R_{max}(t_j) \geq \eps,\; \forall j.$$
    Without loss of generality, for all $j$, assume that
    $$\sup_{s\in [t_j-\lda_j^{-1},t_j]} R_{max}(s) \leq 2\lda_j.$$
    If not, we can choose another time in a smaller subinterval iteratively. Note that this time-picking process must cease in a finite number of iterations due to the smoothness of the ancient flow. Now choose points $p_j$ satisfying
    $$\lda_j = R(p_j,t_j).$$
    Then we have a sequence $\{(\lda_j,p_j,t_j)\}$ with
    $$\lda_j\geq \eps, \; R(x,t) \leq 2\lda_i,\;\forall t\in [t_j - \lda_j^{-1},t_j] \to -\infty.$$
    Define the parabolically rescaled flows
    $$g_j(t) := \lda_jg(t_i + \lda_j^{-1}t),\; t\in [-1,0].$$
    The rescaled flows still satisfy the Ricci flow equation, and satisfy uniform bounds
    $$R_{g_j} \in (0,2],\; R_{g_j}(p_j,0) = 1,\; k_{g_j} \equiv \frac{k}{\lda_j} \leq \frac{k}{\eps}$$
    where $k_{g_j}$ denotes the (constant) boundary geodesic curvature of $g_j(t)$.
    Moreover, using the scale invariance of the total curvature, we find that
    $$\int_\bD R_{g_j(0)} \; dA_{g_j(0)} = Q(t_j) \to 0 \text{ as }j\to \infty$$
    by {Lemma \ref{QL convergence}}. Finally, note that the boundary length $L(g_j(0))$ of $g_j(0)$ has the uniform lower bound
    $$L(g_j(0)) = \sqrt{\lda_j}L(t_j) \geq \sqrt{\eps}\frac{\pi}{k}$$
    for sufficiently large $j$. We now invoke Hamilton's compactness theorem (\cite[Theorem 4.1]{Gia16b}) to take a pointed subsequential smooth limit
    $$(\old, g_j(t),p_j) \to (\bD_\infty,g_\infty(t),p_\infty)$$
    on compact spaces for $t\in [0,1]$. Note that the limit space $\bD_\infty$ may not be a closed disk, or even a smooth manifold with boundary. By the rescaling,
    $$R_\infty(p_\infty,0) =1.$$
    Choose a small neighborhood $U$ (a coordinate ball if $p_\infty \in \text{Int}\bD_\infty$; a half-ball if $p_\infty\in \partial\bD_\infty$) of $p_\infty \in \bD_\infty$ so that $R_\infty(0) >0$ in $U$. Then we have
    $$0<\int_U R_\infty \; dA_{g_\infty(0)} \leq \lim_{j\to \infty} \int_{\old} R(g_j(0)) \,dA_{g_j(0)} = 0,$$
    which leads to a contradiction.
\end{proof}

\begin{proof}[Proof of Proposition \ref{flatdisklimit}]
    Consider the time-shifted flows
    $$h_j(t) := g(t_j+t), \; t\in [-1,0].$$
    Then the previous lemma yields
    $$\sup_{t\in [-1,0]} R(h_j(t)) \to 0 \text{ as } j\to\infty.$$
    Also by {\eqref{curvL1decay}} and Lemma {\ref{pointwisedecay}}, we know that the flows $h_j(t)$ satisfy a uniform lower bound on boundary curve length and upper bound on the diameter. Invoking Hamilton's compactness theorem (\cite{Gia16b} and \cite{CoMu19}), we obtain a subsequential limit flow $(\bD_\infty,h_\infty(t))$ (in the sense of Cheeger-Gromov-Hamilton)
    $$h_\infty(t) := \lim_{j\to\infty} h_j(t), \; t\in [-1,0]$$
    which is stationary. As the compactness theorem preserves the boundary curvature condition, we deduce that the subsequential limit
    $$h_\infty := \lim_{j\to \infty} h_j(0) = \lim_{j\to \infty} g(t_j)$$
    is a stationary flat disk of constant boundary curvature $k_\infty \equiv k>0$. This proves the existence of a smooth subsequential limit. Due to the monotonicity of the conformal factor provided by the flow equation (\ref{2Du}), prescribing a subsequential backward limit automatically yields the full convergence of conformal factor $u(\cdot, t)$ for all time $t$ going to $-\infty$. As all Riemannian metrics on $\bD$ are conformally equivalent, we know that the limit $\lim_{t\to -\infty} u(t)$
    is uniquely determined and equivalent to a standard conformal factor of flat Euclidean metric $(\old, g_{\fl})$ after a suitable diffeomorphism (see also Section \ref{subsec_reduction}).
\end{proof}

\section{Rotational symmetry}\label{symmetrysection}

In this section we prove the rotational symmetry of any ancient solution to the Ricci flow with boundary equation \eqref{2DRF} which converges smoothly as $t\rightarrow -\infty$ to a flat metric on $\old$ with constant geodesic curvature $k>0$. Recall that by Proposition \ref{flatdisklimit}, any positively curved ancient solution of \eqref{2DRF} with uniformly bounded diameter does indeed satisfy this backward convergence property. The main result of this section is as follows.

\begin{proposition}\label{prop_g_symmetry}
If $(\old,\partial \bD,g(t))$ is an ancient solution of \eqref{2DRF} which converges in $C^\infty(\old)$ as $t\rightarrow -\infty$ to a flat metric on $\old$ with constant geodesic curvature $k>0$, then there is a diffeomorphism $\Psi:\old\rightarrow\old$ such that $\Psi^*(g(t))$ is rotationally symmetric for all $t<0$.
\end{proposition}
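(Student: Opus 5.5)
We will work in the conformal gauge of \cite{CoMu19}: after a fixed diffeomorphism, $g(t)=u(t)g_0$ with $u$ solving \eqref{2Du}. The first step is to normalize the backward limit. The limit $\lim_{t\to-\infty}u(t)g_0$ is a flat metric on $\old$ with constant boundary curvature $k$. Any such metric is the pullback of $g_{\fl}$ by a conformal automorphism of the disk, i.e.\ a M\"obius map of $\bD$ in the coordinate $r=\tan(\psi/2)$. Composing with the inverse of this M\"obius map, which is time independent, we may assume that $w(t):=u(t)^{-1/2}\to w_\infty=k(1+\cos\psi)$ in $C^\infty$. The remaining freedom is the rotation group $SO(2)$ together with the two-parameter family of M\"obius maps. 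At the linearized level this family corresponds exactly to the null eigenspace of \eqref{critrobin}, which consists of linear functions by Proposition \ref{rulingoutnull}.

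The second step is to derive the asymptotics of $v=w-w_\infty$ from \eqref{eq_profile}. We write $v=w_\infty f$, so that $f$ solves $f_t=k^2\Delta_{\bR^2}f+N(f)$ with Robin condition $\partial_rf=f$. The nonlinearity $N$ is quadratic in $f$ and its derivatives and is small by the $C^\infty$ convergence. We expand $f$ in the eigenbasis of \eqref{critrobin}. The eigenvalues are $\mu_0>0$, then $\mu=0$ on the linear functions, and a strictly negative remainder. A standard Merle--Zaag type ODE argument on the projections $f_+,f_0,f_-$ should then show that, as $t\to-\infty$, either the unstable mode dominates, or the neutral mode dominates, or $f\equiv0$. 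In the last case the solution is static flat, which contradicts positive curvature. The stable part cannot dominate backward in time because it would grow as $t\to-\infty$.

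The third step excludes the neutral-dominant case. If the linear modes dominate, then $f\approx a(t)\cdot x$ with $|a'|\lesssim|a|^2$, and $a(t)\to0$ slower than any exponential. We will use the fact that the curvature is positive: $R=2k^2(1+f)^2\Delta\log(1+f)$, and the leading term of $\Delta\log(1+f)$ is $\Delta f-|\nabla f|^2$. For a linear function this equals $-|a|^2<0$ to leading order, which contradicts $R>0$. If that expansion is too crude, we will instead absorb the linear part by re-choosing the M\"obius normalization at each time and then arguing as in Proposition \ref{rulingoutnull}. Consequently $f=Ae^{k^2\mu_0 t}f_0+O(e^{(k^2\mu_0+\delta)t})$ with $A>0$, and by Lemma \ref{lem_eigenfunction} the profile $f_0$ is radial.

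The fourth step upgrades this to exact symmetry. Let $X=\partial_\theta$ be the rotation field, and consider $\phi:=\partial_\theta w$, which satisfies the linearized equation of \eqref{eq_profile} around $w(t)$ together with a Neumann condition. Since $f_0$ is radial, $\phi=O(e^{(k^2\mu_0+\delta)t})$, so $\phi$ decays strictly faster backward than the curvature scale. We then run an energy or maximum principle argument for $\phi$ along the ancient flow. A natural choice is a weighted $L^2$ norm in the $f$-variables, or the maximum principle applied to $\phi/\partial_t w$, since $\partial_t w>0$ solves the same linearized equation. The aim is to show that $\phi/\partial_t w\to0$ as $t\to-\infty$ and that its maximum and minimum are monotone, which forces $\phi\equiv0$. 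The boundary terms are compatible with this because both $\phi$ and $\partial_t w$ satisfy homogeneous Neumann conditions, as $\partial_{\nu_0}w\equiv -k$. I expect this to be the main obstacle. The difficulty is the quotient argument: one must show that $\partial_t w$ is comparable to $e^{k^2\mu_0 t}f_0$ uniformly up to the boundary and that $\phi=o(\partial_t w)$. This requires the refined asymptotics of step two in $C^1$ up to the boundary.
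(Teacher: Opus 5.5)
Your route differs from the paper's, and as written it has a genuine gap in Step 3: the curvature-sign argument does not rule out the neutral-dominant regime. The kernel $\mathrm{span}\{x_1,x_2\}$ is tangent to the two-parameter family of exact flat stationary solutions coming from the M\"obius automorphisms. In your variables these are $q_b(x)=\frac{-2b\cdot x+|b|^2(1+|x|^2)}{1-|b|^2}$, and they satisfy $\Delta\log(1+q_b)\equiv0$. The reason is that the radial second-order term $\frac{|b|^2}{1-|b|^2}(1+|x|^2)$ has Laplacian $\frac{4|b|^2}{1-|b|^2}$, which exactly compensates $|\nabla q_b|^2\approx4|b|^2$.

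In a neutral-dominant regime, Merle--Zaag only gives $f_\pm=o(|a|)$. Integrating the projected equations gives at best $f_\pm=O(|a|^2)$, so $\Delta f_\pm$ is of the same order as $|\nabla f|^2$. To this order $f_++f_-$ reproduces the radial second-order part of the $q_b$. Hence $\Delta f-|\nabla f|^2$ carries no sign at order $|a|^2$, and a solution drifting slowly along $\{q_b\}$ is not excluded by this expansion.

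Your fallback of re-choosing the M\"obius normalization at each time is an unexecuted modulation argument. You would have to control the gauge term generated by the time-dependent M\"obius maps and show that the modulation parameter converges exponentially. That is exactly where the integrability of the kernel must be used.

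The gap closes if you use $R\ge0$ through monotonicity rather than through the sign of $R$. Since $w_t=\tfrac12Rw$, you have $f_t=\tfrac12R(1+f)\ge0$, so $f\to0$ as $t\to-\infty$ forces $f\ge0$ for all $t$. This is the nonlinear counterpart of the nonnegativity hypothesis in Proposition \ref{rulingoutnull}. In the neutral-dominant regime, $r=f-a\cdot x$ satisfies the same Robin problem with forcing $N(f)-a'\cdot x=O(|a|^2)$. Parabolic estimates then give $\|r\|_{C^0}=o(|a|)$, so $f<0$ near $-a/|a|\in\partial\mathbb{D}$, a contradiction. Alternatively, the exponential decay of Lemma \ref{lem_decay_estimate} contradicts the at most polynomial decay forced by $|a'|\lesssim|a|^2$.

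With that repair your argument goes through, and Step 4 is the easy part rather than the main obstacle. Both $\partial_\theta w$ and $\partial_tw=\tfrac12Rw>0$ solve the same linearized equation with zero Neumann data. So $\rho=\partial_\theta w/\partial_tw$ satisfies an equation with no zeroth-order term and $\partial_{\nu_0}\rho=0$. The $C^2$ asymptotics give $\partial_tf\ge ce^{k^2\mu_0t}$ (since $f_0>0$) and $\partial_\theta f=O(e^{(k^2\mu_0+\delta)t})$ (since $f_0$ is radial). Hence $\rho\to0$, and the monotonicity of $\sup\rho$ and $\inf\rho$ forces $\rho\equiv0$.

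The paper never derives the sharp profile. It obtains $\int_{-\infty}^t\|v\|_{C^2}<\infty$ from a Morse--Bott \L{}ojasiewicz--Simon inequality (Feehan--Maridakis). It then applies the maximum principle to the reflection quotient $(v-v_R)/x_1$, whose Neumann problem has zeroth-order coefficient $O(\|v\|_{C^2})$. Your route needs a full Merle--Zaag analysis of a quasilinear Robin problem. In exchange it yields the sharp backward asymptotics for arbitrary, not necessarily symmetric, solutions, which the paper only obtains in Section \ref{uniquenesssection} after symmetry is known.
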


As discussed in Section \ref{subsec_reduction}, Proposition \ref{prop_g_symmetry} will follow immediately from Lemma \ref{lem_scalar_reduction} and Proposition \ref{prop_symmetry}. The purpose of Sections \ref{subsec_reflection} and \ref{subsec_backwards} is to prove Proposition \ref{prop_symmetry}.

\subsection{Reduction to a scalar evolution equation}\label{subsec_reduction}

To analyze the ancient solutions considered in Proposition \ref{prop_g_symmetry}, it will be convenient to work with respect to the fixed coordinate system on $\old$ and the flat background metric $g_{\mathrm{flat}}$ induced by the standard inclusion $\old\subset\mathbb{R}^2$. That we can do so follows by standard arguments (see for instance \cite[\S2]{WangX}), but below we restate the result and its proof formulated in our setting for clarity.

\begin{lemma}\label{lem_scalar_reduction}
If $(\old,\partial \bD,g(t))$ is an ancient solution of \eqref{2DRF} which converges in $C^\infty(\old)$ as $t\rightarrow -\infty$ to a flat metric on $\old$ with constant geodesic curvature $k>0$, then there is a diffeomorphism $\Psi:\old\rightarrow\old$ and a smooth function $u(\cdot,t)$ such that
\[
\Psi^*(g(t))=u(\cdot,t)g_{\mathrm{flat}},
\]
with $u(\cdot,t)\xrightarrow{t\rightarrow-\infty}1$ smoothly on $\old$.
\end{lemma}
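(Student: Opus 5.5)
The plan is to fix the gauge using the backward limit itself, and then to use the fact that 2D Ricci flow preserves conformal classes. Let $g_{-\infty}:=\lim_{t\to-\infty}g(t)$ in $C^\infty(\old)$. By hypothesis it is a smooth flat metric on $\old$ whose boundary has constant geodesic curvature $k$. The steps are as follows.

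First, build $\Psi$. A flat, simply connected, compact surface with boundary develops isometrically into $\mathbb{R}^2$. Concretely, the developing map is an isometric immersion $\iota:(\old,g_{-\infty})\to\mathbb{R}^2$. Its boundary image is a closed curve of constant curvature $k$, so it is locally a circle of radius $1/k$. The turning number is $1$, since Gauss--Bonnet gives $\int_{\partial}k\,ds=2\pi$. Hence the boundary maps onto a round circle traversed once, and by degree considerations (or the boundary being embedded with rotation index one) $\iota$ is an isometry onto a Euclidean disk of radius $1/k$. Composing with a translation and the dilation $x\mapsto kx$ gives a diffeomorphism $\Phi:\old\to\old$ with
\[
(\Phi^{-1})^*g_{-\infty}=k^{-2}g_{\mathrm{flat}}.
\]
Set $\Psi:=\Phi^{-1}$. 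Alternatively, one could use uniformization together with the classification of conformal flat metrics with constant boundary curvature. Up to the constant factor $k^{-2}$ (absorbed into $u$, or by normalizing $g_{\mathrm{flat}}$ to radius $1/k$), we get $\Psi^*g_{-\infty}=g_{\mathrm{flat}}$.

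Second, conformality of the flow in a fixed gauge. The flow \eqref{2DRF} satisfies $\partial_t g=-R_g g$. Pulling back by the time-independent $\Psi$ gives $\partial_t\Psi^*g=-(R\circ\Psi)\Psi^*g$. Fixing any $t_0$, we can integrate this ODE in $t$ pointwise:
\[
\Psi^*g(t)=\exp\Big(-\int_{t_0}^t R\circ\Psi\,ds\Big)\Psi^*g(t_0).
\]
So all $\Psi^*g(t)$ lie in one conformal class. Taking $t\to-\infty$ and using smooth convergence shows that $g_{\mathrm{flat}}$ lies in this class too. Writing $\Psi^*g(t)=u(\cdot,t)g_{\mathrm{flat}}$ gives a smooth positive $u$. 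It can be expressed as
\[
u(\cdot,t)=\exp\Big(\int_{-\infty}^tR\circ\Psi\,ds\Big)^{-1}.
\]
This integral converges because $u$ is the pointwise ratio of $\Psi^*g(t)$ to $g_{\mathrm{flat}}$, and that ratio has a limit.

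Third, convergence. Since $\Psi^*g(t)\to\Psi^*g_{-\infty}=g_{\mathrm{flat}}$ in $C^\infty(\old)$, we have $u(\cdot,t)=\tfrac12\operatorname{tr}_{g_{\mathrm{flat}}}\Psi^*g(t)$. Hence $u\to1$ smoothly on $\old$ up to the boundary.

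The main obstacle is the first step, showing the developing map is a global isometry onto a round disk. I expect to handle it by first checking that the boundary image is exactly one circle of radius $1/k$, using constant curvature and total turning $2\pi$. Then I would note that a local isometry of the disk whose boundary restriction is an embedding onto that circle is a covering onto the enclosed region, hence a diffeomorphism. Simple connectivity is also available: $\old$ is a disk by Gauss--Bonnet as noted in the introduction.
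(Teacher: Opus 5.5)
Your argument is correct, but it fixes the gauge differently from the paper.

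\textbf{The paper's route.} The paper uniformizes a fixed time slice, taking $H$ with $H^*g(t_0)=\tilde u(\cdot,t_0)g_{\mathrm{flat}}$. It notes that the whole flow stays in that conformal class, and only then normalizes the limiting factor $\tilde u_\infty$ by a M\"obius automorphism $F$ of the disk, setting $\Psi=H\circ F$.

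\textbf{Your route.} You build $\Psi$ directly from the backward limit. You develop $g_{-\infty}$ into $\mathbb{R}^2$ and use Gauss--Bonnet to see that the boundary goes once around a circle of radius $1/k$. A degree/covering argument then shows the developing map is an isometry onto a round disk. Conformality of $\Psi^*g(t)$ to $g_{\mathrm{flat}}$ follows from $\partial_t g=-Rg$ and the limit, exactly as in the paper.

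\textbf{Comparison.} Your route is more self-contained. The paper asserts without proof that a fractional linear $F$ with $g_{\mathrm{flat}}=F^*(\tilde u_\infty g_{\mathrm{flat}})$ exists. Justifying this (a flat conformal metric on the disk with constant boundary curvature is the pullback of a round disk by a conformal diffeomorphism) is essentially your developing-map argument. You also need no uniformization, only simple connectivity: a global parallel orthonormal coframe on $\overline{\mathbb{D}}$ is closed, hence exact, which gives the developing map smoothly up to the boundary. The paper's route is shorter given standard facts. It also makes the residual M\"obius gauge freedom explicit, and this reappears as the family $q_a$ in Claim~\ref{claim_morse_bott}.

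\textbf{Normalization.} Both arguments share one small issue: $u\to 1$ holds literally only if $k=1$, or if $g_{\mathrm{flat}}$ is replaced by $k^{-2}g_{\mathrm{flat}}$. The paper assumes $k=1$ tacitly; compare the boundary condition $\partial_r v=v$ in \eqref{eq_symmetry_v}. Absorbing $k^{-2}$ into $u$, as you suggest as one option, would give $u\to k^{-2}$ instead.
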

\begin{proof}
Given an ancient Ricci flow with boundary $g(t)$ satisfying the hypotheses of the lemma, fix some time $t_0<0$. Then by the uniformization theorem, there exists a diffeomorphism $H:\old\rightarrow\old$ so that $H^*g(t_0)=\tilde{u}(\cdot,t_0)g_{\mathrm{flat}}$ for some smooth function $\tilde{u}(\cdot,t_0)>0$. Since $H^*g(t)$ also satisfies the Ricci flow equation \eqref{2DRF}, we can write $H^*g(t)=\tilde{u}(\cdot,t)g_{\mathrm{flat}}$ for some smooth $\tilde{u}(\cdot,t)$ and all $t$, with the smooth convergence $\tilde{u}(\cdot,t)\xrightarrow{t\rightarrow-\infty}\tilde{u}_\infty(\cdot)$ such that $\tilde{u}_\infty(\cdot)g_{\mathrm{flat}}$ is flat with constant unit boundary geodesic curvature. There then exists a diffeomorphism given by a fractional linear transformation $F:\old\rightarrow\old$ such that $g_{\mathrm{flat}}=F^*(u_\infty(\cdot)g_{\mathrm{flat}})$, and since $F$ is a conformal map this means there is some smooth function $u(\cdot,t)$ such that for all $t<0$,
\[
F^*(H^*g(t))=F^*(\tilde{u}(\cdot,t))g_{\mathrm{flat}})=u(\cdot,t)g_{\mathrm{flat}}.
\]
Clearly $u(\cdot,t)\xrightarrow{t\rightarrow-\infty}1$ smoothly, so we can take $\Psi=H\circ F$.
\end{proof}

Thus, to prove Proposition \ref{prop_g_symmetry} it suffices to prove the rotational symmetry of the function $u(\cdot,t)$ in Lemma \eqref{lem_scalar_reduction}. We now make a transformation which will be useful for this purpose. Given a solution $u(\cdot,t) g_{\mathrm{flat}}$ of \eqref{2DRF} as in Lemma \ref{lem_scalar_reduction}, define the profile function $v=\frac{1}{\sqrt{u}}-1$, as we did in \eqref{eq_profile} but now with respect to the flat rather than spherical metric on $\old$ as the background metric. We then find that $v$ satisfies
\begin{equation}\label{eq_symmetry_v}
\begin{cases}
v_t=(1+v)^{-2}\Delta v-(1+v)^{-3}|\nabla v|^2\quad&\text{in }\old\times(-\infty,0),
\\
\partial_r v=v\quad&\text{on }\partial \bD\times(-\infty,0),
\end{cases}
\end{equation}
with $v(\cdot,t)$ smoothly converging to $0$ as $t\rightarrow -\infty$. Proposition \ref{prop_g_symmetry} therefore follows from our discussion above and Proposition \ref{prop_symmetry} below.

\begin{proposition}\label{prop_symmetry}
If $v:\old\times(-\infty,0)\rightarrow\bR$ satisfies \eqref{eq_symmetry_v} with $v(\cdot,t)\xrightarrow{C^\infty(\old)} 0$ as $t\rightarrow -\infty$, then $v(\cdot,t)$ is rotationally symmetric for all $t<0$.
\end{proposition}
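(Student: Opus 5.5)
Rotational symmetry is equivalent to the vanishing of the angular derivative $w:=\partial_\theta v = x_1\partial_{x_2}v - x_2\partial_{x_1}v$. The plan is to prove $w\equiv 0$ by combining a linear parabolic equation for $w$, an asymptotic expansion of $v$ as $t\to-\infty$, and a maximum principle argument on sectors (a moving-plane/Alexandrov-type reflection through lines passing through the origin).

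First, since the operator in \eqref{eq_symmetry_v} and the boundary condition $\partial_r v = v$ commute with rotations of $\old$, differentiating in $\theta$ gives a linear equation
\[
w_t = (1+v)^{-2}\Delta w + b\cdot\nabla w + c\,w, \qquad \partial_r w = w \text{ on }\partial\bD,
\]
with coefficients $b,c$ depending on $v,\nabla v,\nabla^2 v$ and tending to $0$ smoothly as $t\to-\infty$. The same equation holds for the odd reflection difference $v^\ell(x,t) := v(x,t)-v(R_\ell x,t)$ across any line $\ell$ through the origin, because the reflection $R_\ell$ also preserves the disk and the Robin condition.

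Second, I would determine the backward asymptotics of $v$ by comparison with the linearized critical Robin heat equation $\Phi_t=\Delta\Phi$, $\partial_r\Phi=\Phi$, using the spectrum from Section \ref{robinbcspectrum}. Since $v\to0$ smoothly, the quadratic terms are small. A standard ODE/spectral-projection argument (Merle--Zaag type) then shows that either $v$ is dominated by the $\mu_0$ mode, i.e.\ $v = Ae^{\mu_0 t}f_0 + o(e^{\mu_0 t})$ with $A\ne 0$, or $v$ decays faster than every exponential along some sequence. In the latter case, backward uniqueness for the linear equation gives $v\equiv 0$. The null modes, which are linear functions, cannot dominate: they correspond to infinitesimal Möbius reparametrizations, which were already fixed in Lemma \ref{lem_scalar_reduction} by normalizing $u\to1$. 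More precisely, a nonzero null component would contradict $v\to0$ combined with the strict gap $\mu_0>0\geq\mu_1$ through the quadratic error analysis; if needed, one can compose with a time-independent Möbius map as in Lemma \ref{lem_scalar_reduction}. I expect this step to be the main obstacle, because the quadratic error must be shown to decay like $e^{2\mu_0 t}$ rather than merely be small, which requires a bootstrap using uniform parabolic Schauder estimates up to the Robin boundary.

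Third, with $v\approx Ae^{\mu_0t}f_0$ where $f_0$ is radial with $f_0'(r)>0$ for $r>0$, I would run reflection across lines $\ell$ through the origin. Fix $\ell$ and let $H_\ell$ be a half-disk. On $H_\ell$, the function $v^\ell$ vanishes on $\ell\cap\old$ and satisfies the Robin condition on the arc. Rather than using radial monotonicity, I would instead run the reflection across lines $\ell_s$ at distance $s>0$ from the origin and compare $v$ with its reflection. The leading term $Ae^{\mu_0 t}f_0$ has the strict sign $f_0(x)-f_0(R_{\ell_s}x)>0$ in the cap, which yields $v^{\ell_s}\ge0$ for $t\ll0$, uniformly in $s\in(0,1)$ once the error is controlled relative to $e^{\mu_0 t}$ near $s\to0$ by Hopf-type quotient estimates. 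The boundary Robin condition $\partial_r v = v$ is not reflection-invariant for off-center lines, so this needs care. The sign is then propagated forward in time using the maximum principle for the linear equation; for this I would use a weight $e^{-\Lambda t}$ and the Hopf lemma at the arc, exactly as in Lemma \ref{maxtime}. Letting $s\to0$ gives $v^\ell\ge0$, and applying the same argument to the opposite side gives $v^\ell\le0$, so $v$ is symmetric under every reflection through the origin. Consequently $v$ is rotationally symmetric for all $t<0$.
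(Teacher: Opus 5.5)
The gap is in your third step. For $s>0$ the Euclidean reflection $R_{\ell_s}$ maps neither $\old$ nor $\partial\bD$ to itself. For $x$ on the boundary arc of the cap, $R_{\ell_s}x$ is an interior point of $\bD$, so $v\circ R_{\ell_s}$ obeys no boundary condition there, and $v^{\ell_s}$ has no boundary condition at all on that arc. The Robin condition $\partial_r v(x)=v(x)$ says nothing about $\nabla v(R_{\ell_s}x)$. So nothing prevents $v^{\ell_s}$ from developing a negative minimum on the arc, and the sign you obtain at very negative times cannot be propagated forward. You say this ``needs care'', but it is exactly where the method breaks down. It is the standard obstruction to moving planes for Neumann or Robin problems on a ball, and it is why the paper remarks that Alexandrov reflection does not carry over.

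Your appeal to Hopf ``exactly as in Lemma \ref{maxtime}'' is also wrong as stated, even for lines through the origin. With $\partial_r z=z$, a negative boundary minimum has $\partial_r z=z<0$, which is what Hopf predicts, so no contradiction arises. Lemma \ref{maxtime} works only because, in the hemisphere background, the difference of the functions $u^{-1/2}$ has homogeneous Neumann data.

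The paper needs neither a sign nor off-center reflections. It reflects only across $\{x_1=0\}$ and sets $\phi=(v-v_R)/x_1$. Since $x_1$ is harmonic with $\partial_r x_1=x_1$ on $\partial\bD$, i.e.\ a null eigenfunction of the linearization, $\phi$ solves a Neumann problem whose zeroth-order coefficient is $O(\|v\|_{C^2})$ with no $O(1)$ part. Dividing by a generic positive solution of the Robin condition, such as $1+|x|^2$, would create an $O(1)$ zeroth-order term and kill the argument. The maximum principle then gives $\|\phi(\cdot,t)\|_{L^\infty}\le\exp\bigl(\gamma\int_s^t\|v(\cdot,\tau)\|_{C^2}\,d\tau\bigr)\|\phi(\cdot,s)\|_{L^\infty}$. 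Exponential decay of $\|v\|_{C^2}$ keeps the exponent bounded as $s\to-\infty$, while $\|\phi(\cdot,s)\|_{L^\infty}\le 2\|\partial_{x_1}v(\cdot,s)\|_{L^\infty}\to0$. Hence $\phi\equiv0$ at early times, and for all times by uniqueness.

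So all you need from your second step is an integrable decay rate for $\|v\|_{C^2}$, not the $f_0$-profile. The paper gets this from a {\L}ojasiewicz--Simon inequality with exponent $\frac12$. That inequality holds because the critical set near $0$ is exactly the M\"obius family $\{q_a\}$, tangent to $\mathrm{span}\{x_1,x_2\}$ (Morse--Bott). That structural fact, not a quadratic error estimate, is what rules out null-mode dominance in your sketch. Composing with a M\"obius map is not available, since it would move the backward limit off $0$.

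If you want a reflection argument with a sign, use maps that actually preserve the problem: inversions $\sigma$ in circles orthogonal to $\partial\bD$. On the cap inside such a circle, the conformal factor $u$ of Lemma \ref{lem_scalar_reduction} and $(u\circ\sigma)|D\sigma|^2$ satisfy the same geodesic curvature condition on the arc and agree on the circle. The strict inequality at early times then comes from $|D\sigma|>1$ on the cap rather than from $f_0$. Letting the circle tend to a diameter gives $u\le u\circ R_\ell$ on a half-disk.
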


The rest of this section is devoted to proving Proposition \ref{prop_symmetry}. We set up a reflection argument, establish a backward decay estimate for $v$, and finally conclude with the proof of Proposition \ref{prop_symmetry} at the end of Section \ref{subsec_backwards}. 

\subsection{A reflection argument}\label{subsec_reflection}

To prove Proposition~\ref{prop_symmetry} it suffices to show that $v(\cdot,t)$ is equal to its reflection over any diameter of $\old$, and by the $SO(2)$-invariance of $g_{\mathrm{flat}}$ it suffices to show this for the diameter $\{x_1=0\}$ under the convention that $(x_1,x_2)$ denotes the standard Euclidean coordinates on $\old$. For this purpose, define the reflection
\[
R:\old\longrightarrow\old,\qquad (x_1,x_2)\mapsto(-x_1,x_2),
\]
and let $v_R(x,t)=v(Rx,t)$. We would like to show that $v-v_R\equiv 0$. Below we first derive the parabolic equation satisfied by a related function.

\begin{lemma}\label{lem_neumann}
If $v:\old\times(-\infty,0)\rightarrow\bR$ satisfies \eqref{eq_symmetry_v}, then the function
\[
\phi:\old\longrightarrow\old,\qquad \phi(x,t)=\frac{v(x,t)-v_R(x,t)}{x_1}
\]
is even in the $x_1$ coordinate and satisfies
\begin{equation}\label{eq_psi}
\begin{cases}
\phi_t=A\Delta\phi+B\cdot\nabla\phi+C\phi\quad&\text{in }\old\times(-\infty,0),
\\
\partial_r\phi=0\quad&\text{on }\partial \bD\times(-\infty,0),
\end{cases}
\end{equation}
where
\begin{align*}
A&=(1+v)^{-2},
\\
B&=2(1+v)^{-2}\frac{\nabla x_1}{x_1}-(1+v)^{-3}\nabla(v+v_R),
\\
C&=\frac{\alpha(v)-\alpha(v_R)}{v-v_R}\Delta v_R+\frac{\beta(v)-\beta(v_R)}{v-v_R}|\nabla v_R|^2-(1+v)^{-3}\nabla(v+v_R)\cdot\frac{\nabla x_1}{x_1}.
\end{align*}
\end{lemma}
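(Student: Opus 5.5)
This is a direct computation; the key points are the definitions of $\alpha,\beta$ and the boundary behavior of the reflection. I would write the equation in \eqref{eq_symmetry_v} as $v_t=\alpha(v)\Delta v+\beta(v)|\nabla v|^2$ with $\alpha(s)=(1+s)^{-2}$ and $\beta(s)=-(1+s)^{-3}$. Since $R$ is a Euclidean isometry preserving $\old$, and hence also $\partial\bD$ and the radial derivative $\partial_r$, the reflected function $v_R$ satisfies the same equation and the same boundary condition $\partial_r v_R=v_R$. Set $w=v-v_R$. Subtracting the two equations gives
\[
w_t=\alpha(v)\Delta w+(\alpha(v)-\alpha(v_R))\Delta v_R+\beta(v)\big(|\nabla v|^2-|\nabla v_R|^2\big)+(\beta(v)-\beta(v_R))|\nabla v_R|^2 .
\]
Here I use $|\nabla v|^2-|\nabla v_R|^2=\nabla(v+v_R)\cdot\nabla w$, and I write the coefficient differences as $\frac{\alpha(v)-\alpha(v_R)}{v-v_R}\,w$ and $\frac{\beta(v)-\beta(v_R)}{v-v_R}\,w$. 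These quotients are smooth, since they equal the integrals $\int_0^1\alpha'$ and $\int_0^1\beta'$ along the segment between $v_R$ and $v$. Thus $w$ satisfies a linear equation $w_t=A\Delta w+\tilde B\cdot\nabla w+\tilde C w$ with $A=(1+v)^{-2}$, $\tilde B=-(1+v)^{-3}\nabla(v+v_R)$, and $\tilde C$ consisting of the two difference-quotient terms.

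Next I substitute $w=x_1\phi$. The function $w$ is odd in $x_1$, so it vanishes on $\{x_1=0\}$, and $\phi=w/x_1$ is smooth by Hadamard's lemma. It is even, being a ratio of two odd functions. Because $x_1$ is harmonic, $\Delta w=x_1\Delta\phi+2\nabla x_1\cdot\nabla\phi$ and $\nabla w=x_1\nabla\phi+\phi\nabla x_1$. Inserting these and dividing by $x_1$ gives the stated $A$, the stated $B=2A\frac{\nabla x_1}{x_1}+\tilde B$, and $C=\tilde C+\tilde B\cdot\frac{\nabla x_1}{x_1}$, which matches the formula in the lemma. The singular coefficient $\frac{\nabla x_1}{x_1}$ appears only through these combinations. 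The identity holds pointwise away from $\{x_1=0\}$, and I would read it there, or in the sense that the equation combines $\Delta\phi+\frac{2}{x_1}\partial_1\phi$, which is the Laplacian in $\bR^4$ of an even function; this is the analogue of the lift to $\mathbb{S}^4_+$ used in Lemma \ref{confloc}.

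For the boundary condition, I use that on $\partial\bD$ the outward normal is $x$ itself, so $\partial_r x_1=x_1$. Then $\partial_r w=w$ gives $x_1\partial_r\phi+x_1\phi=x_1\phi$, so $x_1\partial_r\phi=0$. This yields $\partial_r\phi=0$ on $\partial\bD\setminus\{x_1=0\}$, and by continuity on all of $\partial\bD$. The only delicate point in the whole argument is this bookkeeping at $\{x_1=0\}$, for both the smoothness of $\phi$ and the boundary condition; the rest is the routine computation above.
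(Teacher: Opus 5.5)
Your proposal is correct and follows essentially the same route as the paper. You subtract the equations for $v$ and $v_R$ using $\alpha(s)=(1+s)^{-2}$ and $\beta(s)=-(1+s)^{-3}$, substitute $z=x_1\phi$ via the harmonicity of $x_1$ (with Hadamard's lemma for the smoothness and evenness of $\phi$), and get the Neumann condition from $\partial_r x_1=x_1$ on $\partial\bD$ plus continuity. The points you make explicit, namely that $v_R$ solves the same Robin problem and that the difference quotients have an integral form, are details the paper leaves implicit, and your reading of the singular $\frac{\nabla x_1}{x_1}$ terms matches the remark following the lemma.
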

\begin{proof}
The results follow by direct computations which we detail below.

First, note that $v(x,t)-v_R(x,t)$ is a smooth function which is odd in $x_1$, so by Hadamard's lemma $\phi$ is smooth and well-defined on $\old$. Now let $\alpha(v)=(1+v)^{-2}$ and let $\beta(v)=-(1+v)^{-3}$. Recalling that $\Delta x_1=0$, we now compute directly that
\begin{align*}
(v-v_R)_t&=\alpha(v)\Delta v-\alpha(v_R)\Delta v_R+\beta(v)|\nabla v|^2-\beta(v_R)|\nabla v_R|^2
\\
&=\alpha(v)\Delta(v-v_R)+\beta(v)(\nabla v+\nabla v_R)\cdot \nabla(v-v_R)
\\
&\qquad\qquad + (\alpha(v)-\alpha(v_R))\Delta v_R+(\beta(v)-\beta(v_R))|\nabla v_R|^2.
\end{align*}
If we let $z=v-v_R$ then we can rewrite this as
\begin{equation}\label{eq_z}
z_t=\alpha(v)\Delta z+\beta(v)
\nabla(v+v_R)\cdot\nabla z+ F z,
\end{equation}
where
\[
F=\frac{\alpha(v)-\alpha(v_R)}{v-v_R}\Delta v_R+\frac{\beta(v)-\beta(v_R)}{v-v_R}|\nabla v_R|^2.
\]
Note that $F$ is well defined and smooth since $\alpha(v)$, $\beta(v)$ are smooth functions of $v$, as we always have $1+v>0$.

We then deduce \eqref{eq_psi} from \eqref{eq_z} from the identities
\[
\Delta z=x_1\Delta\phi+2\nabla x_1\cdot\nabla\phi,\qquad \nabla z=x_1\nabla\phi+\phi\nabla x_1.
\]

Finally, obtain the Neumann boundary data, first recall that $\partial_r x_1=x_1$ on $\partial \bD$, and then compute at $\partial \bD\cap\{x_1\neq 0\}$ that
\begin{align*}
\partial_r \phi(x,t)&=\partial_r\left(\frac{v(x,t)-v_R(x,t)}{x_1}\right)
\\
&=\frac{\partial_r v(x,t)-\partial_r v_R(x,t)}{x_1}-\frac{v(x,t)-v_R(x,t)}{x_1^2}\partial_r x_1
\\
&=\frac{v(x,t)-v_R(x,t)}{x_1}-\frac{v(x,t)-v_R(x,t)}{x_1}
\\
&=0.
\end{align*}
So by continuity we conclude that $\partial_r\phi=0$ on all of $\partial \bD$.
\end{proof}

\begin{remark}
The coefficient $B$ in \eqref{eq_psi} is singular on $\old\cap\{x_1=0\}$, due to $2(1+v)^{-2}\frac{\nabla x_1}{x_1}$. However, both sides of the parabolic equation \eqref{eq_psi} itself are well-defined and smooth. This is because $\frac{\nabla x_1}{x_1}\cdot\nabla\phi=\frac{\partial_{x_1}\phi}{x_1}$, which is smooth since $\phi$ is even in $x_1$. For a similar reason we see that the cofficient $C$ in \eqref{eq_psi} is smooth despite the $(1+v)^{-3}\nabla(v+v_R)\cdot\frac{\nabla x_1}{x_1}$ term, because $v+v_R$ is even in $x_1$.
\end{remark}

Applying the maximum principle to \eqref{eq_psi}, we obtain an $L^\infty$ estimate for the function $\phi$. Once we obtain a suitable decay estimate for $v$ as in Section \ref{subsec_backwards}, this will allow us to conclude that $\phi$ must vanish identically.

\begin{lemma}\label{lem_integral_estimate}
If $v:\old\times(-\infty,0)\rightarrow\bR$ satisfies \eqref{eq_symmetry_v}  with $v(\cdot,t)\xrightarrow{C^\infty(\mathbb{D})} 0$ as $t\rightarrow -\infty$, and we define $\phi(x,t)=\frac{v(x,t)-v_R(x,t)}{x_1}$ as in Lemma \ref{lem_neumann}, then there exists some $T<0$ and $\gamma>0$ such that for any $-\infty<s<t\leq T$ we have
\begin{equation}\label{eq_integral_estimate}
\|\phi(\cdot,t)\|_{L^\infty(\old)}\leq \gamma\left(\int_s^t \|v(\cdot,\tau)\|_{C^2(\old)}~d\tau\right)\|\phi(\cdot,s)\|_{L^\infty(\old)}.
\end{equation}
\end{lemma}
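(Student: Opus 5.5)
The plan is a maximum principle for the equation \eqref{eq_psi} of Lemma \ref{lem_neumann}, followed by Gronwall's inequality. The key point is that the zeroth-order coefficient $C$ is controlled pointwise by $\|v(\cdot,\tau)\|_{C^2(\old)}$. One caveat: taken literally, the displayed inequality cannot hold, since at $t=s$ its right-hand side vanishes while $\phi(\cdot,s)$ need not. What this argument actually produces is
\[
\|\phi(\cdot,t)\|_{L^\infty(\old)}\leq \exp\Big(\gamma\int_s^t\|v(\cdot,\tau)\|_{C^2(\old)}\,d\tau\Big)\|\phi(\cdot,s)\|_{L^\infty(\old)},
\]
and I read \eqref{eq_integral_estimate} as this exponential form.

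First, I choose $T<0$ so that $\|v(\cdot,t)\|_{C^0(\old)}\leq \tfrac12$ for $t\leq T$, which is possible because $v\to0$ in $C^\infty$. Then $\alpha(v)=(1+v)^{-2}$ and $\beta(v)=-(1+v)^{-3}$ have uniformly bounded derivatives on the relevant range. I bound the three terms of $C$ one at a time.
\begin{itemize}
\item By the mean value theorem, the difference quotients $\frac{\alpha(v)-\alpha(v_R)}{v-v_R}$ and $\frac{\beta(v)-\beta(v_R)}{v-v_R}$ are bounded by absolute constants.
\item Hence the first two terms of $C$ are bounded by a constant times $|\Delta v_R|+|\nabla v_R|^2\lesssim \|v\|_{C^2}$, using $\|v\|_{C^1}\leq1$ for $t\le T$ after shrinking $T$.
\item The third term is $(1+v)^{-3}\partial_{x_1}(v+v_R)/x_1$. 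Since $v+v_R$ is even in $x_1$, its derivative $\partial_{x_1}(v+v_R)$ is odd, so Hadamard's lemma (the mean value theorem in $x_1$) gives $|\partial_{x_1}(v+v_R)/x_1|\leq \sup|\partial^2_{x_1}(v+v_R)|\leq 2\|v\|_{C^2}$.
\end{itemize}
Altogether this gives $|C(\cdot,\tau)|\leq \gamma\|v(\cdot,\tau)\|_{C^2(\old)}$ for $\tau\leq T$.

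Next I set $m(t)=\max_{\old}|\phi(\cdot,t)|$ and show that $m(t)\exp\big(-\gamma\int_s^t\|v\|_{C^2}\big)$ is nonincreasing on $[s,T]$. Concretely, I would compare $\pm\phi$ with $M(t)=(m(s)+\delta)\exp\big(\gamma\int_s^t\|v\|_{C^2}d\tau+\delta(t-s)\big)$ and argue at a first touching point.
\begin{itemize}
\item Boundary touching is ruled out by the Neumann condition $\partial_r\phi=0$ and Hopf's lemma, exactly as in the proof of Lemma \ref{maxtime}. If needed, I first pass to $\tilde\phi=e^{-\Lambda t}\phi$ so that the zeroth-order coefficient is negative, as done there.
\item At an interior touching point we have $\Delta\phi\leq0$ and $\nabla\phi=0$ (for the maximum of $+\phi$).
\item The singular drift term needs care on $\{x_1=0\}$. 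There it is $2A\,\partial_{x_1}\phi/x_1$, which, since $\phi$ is even in $x_1$, equals $2A\,\partial^2_{x_1}\phi\le0$ at a maximum. So the drift term is harmless.
\end{itemize}
Thus $\phi_t\leq C\phi\leq \gamma\|v\|_{C^2}M$ at the touching point, which contradicts $M'>\gamma\|v\|_{C^2}M$. Letting $\delta\to0$ gives the exponential estimate displayed above.

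The main obstacle is the uniform control of the singular-looking terms. The bound on $C$ has to be independent of $x_1$ near the diameter, and the $B\cdot\nabla\phi$ term must be handled at extremal points lying on $\{x_1=0\}$. Both are settled by the parity of $v\pm v_R$, as described above. Everything else is the standard Neumann maximum principle.
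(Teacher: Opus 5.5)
Your argument is correct and is essentially the paper's proof: the maximum principle for \eqref{eq_psi}, the parity trick for the singular drift on $\{x_1=0\}$, Hamilton's trick/Gronwall, and the pointwise bound $|C|\le\gamma\|v\|_{C^2}$ (which you justify in more detail than the paper does). Your caveat is also right: the paper's own argument yields the factor $\exp\bigl(\gamma\int_s^t\|v(\cdot,\tau)\|_{C^2}\,d\tau\bigr)$, and the displayed \eqref{eq_integral_estimate} is false as written at $t=s$. The exponential form still suffices for Proposition \ref{prop_symmetry}, because $\int_{-\infty}^t\|v\|_{C^2}\,d\tau<\infty$ by Lemma \ref{lem_decay_estimate} and $\|\phi(\cdot,s)\|_{L^\infty}\le 2\|\nabla v(\cdot,s)\|_{L^\infty}\to 0$ as $s\to-\infty$.

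One simplification: at boundary maxima you do not need Hopf's lemma, whose standard form does not directly cover the singular drift at $(0,\pm1)$. There the Neumann condition gives $\nabla\phi=0$ and the one-sided second-derivative test gives $\Delta\phi\le 0$. At $(0,\pm1)$ the drift term equals $2A\,\phi_{\theta\theta}\le 0$, which is exactly how the paper handles its boundary cases (3) and (4).
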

\begin{proof}
For any fixed $\tau\in[s,t]$, suppose that $\phi(\cdot,\tau)$ achieves a maximum at $p_\tau\in\old$. We first check using \eqref{eq_psi} and the notation of Lemma \ref{lem_neumann} that $\phi_t(p_\tau)\leq (C\phi)(p_\tau)$:
\begin{enumerate}
\item If $p_\tau\in\mathrm{Int}(\old)\cap\{x_1\neq 0\}$, then $\Delta\phi(p_\tau)\leq 0$ and $\nabla\phi(p_\tau)=0$, so $(A\Delta\phi)(p_\tau)+(B\cdot\nabla\phi)(p_\tau)\leq 0$.
\item If $p_\tau\in\mathrm{Int}(\old)\cap\{x_1= 0\}$, then again $(A\Delta\phi)(p_\tau)\leq 0$. We also have that $(B\cdot\nabla\phi)(p_\tau)=\left(2(1+v)^{-2}\phi_{x_1x_1}\right)(p_\tau)\leq 0$.
\item If $p_\tau\in\partial \bD\cap\{x_1\neq 0\}$, then the Neumann condition $\partial_r\phi(p_\tau)=0$ implies that $\partial_r^2\phi(p_\tau)\leq 0$, while we also have $\partial_\theta\phi(p_\tau)$ and $\partial^2_\theta\phi(p_\tau)\leq 0$. So again $\Delta\phi(p_\tau)\leq 0$ and $\nabla\phi(p_\tau)=0$ and $(A\Delta\phi)(p_\tau)+(B\cdot\nabla\phi)(p_\tau)\leq 0$.
\item If $p_\tau\in\partial \bD\cap\{x_1= 0\}$, then again $\partial^2_r\phi(p_\tau)\leq 0$ and $\partial^2_\theta\phi(p_\tau)\leq 0$, so that $(A\Delta\phi)(p_\tau)\leq 0$. We also have that $(B\cdot\nabla\phi)(p_\tau)=\left(2(1+v)^{-2}\phi_{\theta\theta}\right)(p_\tau)\leq 0$
\end{enumerate}
The same argument also works with $\phi$ replaced by $-\phi$, so Hamilton's trick \cite[Lemma 3.5]{Hamilton86} (see \cite[Lemma 2.1.3]{Mantegazza} for a version more closely related to our setting) then implies that 
\[
\|\phi(\cdot,t)\|_{L^\infty(\old)}\leq \left(\int_s^t \max_{\old}\|C(\cdot,\tau)\|_{L^\infty(\old)}~d\tau\right)\|\phi(\cdot,s)\|_{L^\infty(\old)}.
\]
To conclude, we observe from the convergence $v(\cdot,t)\xrightarrow{C^\infty(\old)} 0$ as $t\rightarrow -\infty$ that we can find constants $T<\infty$ and $\gamma>0$ so that for all $\tau\leq T$ we have
\[
\|C(\cdot,\tau)\|_{L^\infty(\old)}\leq\gamma\|v(\cdot,\tau)\|_{C^2(\old)}.
\]
\end{proof}

\subsection{Backward exponential convergence of the conformal factor}\label{subsec_backwards}

We now estimate the size of $\|v(\cdot,\tau)\|_{C^2(\old)}$ when $\tau$ is sufficiently negative. The decay rate estimate we obtain below for $v$ will allow us to conclude using Lemma \ref{lem_integral_estimate} that $\phi$ must vanish identically, which will prove Proposition \ref{prop_symmetry}.

\begin{lemma}\label{lem_decay_estimate}
If $v:\old\times(-\infty,0)\rightarrow\bR$ satisfies \eqref{eq_symmetry_v} with $v(\cdot,t)\xrightarrow{C^\infty(\mathbb{D})} 0$ as $t\rightarrow -\infty$, then there exists $S<0$ and $a,b>0$ such that for all $t\leq S$ we have
\begin{equation}\label{eq_decay_estimate}
\|v(\cdot,t)\|_{C^2(\old)}\leq a e^{bt}.
\end{equation}
\end{lemma}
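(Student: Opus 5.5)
The plan is to exploit the sign of $v$ and test the equation against the positive eigenfunction $f_0$ of \eqref{critrobin}. This gives an ODE inequality for a weighted $L^1$ quantity, which parabolic regularity then upgrades to $C^2$. Throughout, $g(t)$ is the positively curved flow of Proposition \ref{prop_g_symmetry} pulled back as in Lemma \ref{lem_scalar_reduction}. The key observation is that $v\ge 0$. Indeed $u_t=-Ru\le 0$, so $v=u^{-1/2}-1$ is nondecreasing in $t$, and since $v\to 0$ as $t\to-\infty$ we get $v\ge 0$. (If $R\equiv 0$ then $v\equiv 0$ and there is nothing to prove.)

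\textbf{Step 1: divergence form.} Since $\operatorname{div}((1+v)^{-2}\nabla v)=(1+v)^{-2}\Delta v-2(1+v)^{-3}|\nabla v|^2$, equation \eqref{eq_symmetry_v} can be rewritten as
\[
v_t=\operatorname{div}(a\nabla v)+(1+v)^{-3}|\nabla v|^2\ \ge\ \operatorname{div}(a\nabla v),\qquad a:=(1+v)^{-2}.
\]
Now set $m(t):=\int_{\bD} f_0\, v\,dx\ge 0$, with $f_0$ the radial positive eigenfunction of Lemma \ref{lem_eigenfunction}, normalized so that $f_0\ge 1$. Note $\Delta f_0=\mu_0 f_0$ and $\partial_r f_0=f_0$ on $\partial\bD$. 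We integrate by parts twice, using $\partial_r v=v$ and $\partial_r f_0=f_0$ on $\partial \bD$. The boundary terms $\int_{\partial\bD} a f_0\partial_r v-\int_{\partial\bD} a v\,\partial_r f_0$ cancel, leaving
\[
m'(t)\ \ge\ \int_{\bD} v\big(a\,\mu_0 f_0+\nabla a\cdot\nabla f_0\big)\,dx .
\]

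\textbf{Step 2: the ODE inequality.} By smooth convergence $v\to0$, for any $\eps>0$ there is $S$ such that $|a-1|+|\nabla a|\le\eps$ for $t\le S$. On the closed disk $f_0$ is positive and smooth, so $|\nabla f_0|\le C_0 f_0$. Using $v\ge0$, this gives
\[
m'(t)\ge(\mu_0-C\eps)\,m(t)\ge \tfrac{\mu_0}{2}m(t)\qquad (t\le S).
\]
Integrating from $t$ to $S$ yields $m(t)\le m(S)e^{\frac{\mu_0}{2}(t-S)}$. Since $f_0\ge1$ and $v\ge0$, it follows that $\|v(\cdot,t)\|_{L^1(\bD)}\le m(t)\le a' e^{bt}$ with $b=\mu_0/2$.

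\textbf{Step 3: upgrade to $C^2$.} This is the main technical step. On each window $[t-1,t]$ with $t\le S$, $v$ solves the quasilinear equation \eqref{eq_symmetry_v}. Because $v$ is $C^2$-small there, its coefficients are uniformly parabolic and uniformly smooth, and the boundary condition $\partial_r v=v$ is a Robin condition with bounded coefficient. One can view the equation as linear in $v$, with coefficients frozen from the known smooth bounds. I would first apply the local maximum principle (De Giorgi--Nash--Moser up to the boundary for oblique or conormal conditions, as in \cite{Lieberman1996}) to get
\[
\sup_{\bD\times[t-1/2,t]}|v|\le C\int_{t-1}^{t}\|v\|_{L^1}\,d\tau .
\]
I would then apply global parabolic Schauder estimates with the linear Robin condition, as in the proof of Lemma \ref{C0curvbound}, to get $\|v(\cdot,t)\|_{C^2}\le C\sup_{\bD\times[t-1/2,t]}|v|$. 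The constants are uniform in $t\le S$ because the coefficients converge smoothly. Combined with Step 2 this gives \eqref{eq_decay_estimate}.

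The delicate point is making sure these estimates are genuinely linear in $v$. The gradient term $(1+v)^{-3}|\nabla v|^2=\big((1+v)^{-3}\nabla v\big)\cdot\nabla v$ is a drift with a small, bounded coefficient, and the Robin boundary term is linear in $v$, so the estimates should carry no additive constants. If the local maximum principle with the Robin condition proves awkward, a fallback is to interpolate: combine the uniform $C^3$ bounds with the $L^1$ decay, via Gagliardo--Nirenberg, to get $C^2$ decay at a smaller rate $b'$, which still suffices for \eqref{eq_decay_estimate}.
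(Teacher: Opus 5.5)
Your route is genuinely different from the paper's and much more elementary, but as written it does not prove the lemma as stated.

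\textbf{The sign hypothesis.} The lemma assumes only that $v$ solves \eqref{eq_symmetry_v} and $v(\cdot,t)\to 0$ smoothly. Nothing there forces $v\ge 0$: already the linearized problem has the ancient solution $-e^{\mu_0 t}f_0$. You import the sign from $R\ge 0$ for the geometric flow, and Step 2 depends on it essentially. Without it, $m(t)=\int_{\bD}f_0v$ neither bounds $\|v\|_{L^1}$ nor satisfies $m'\ge(\mu_0-C\eps)m$.

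The paper's proof needs no sign. It writes \eqref{eq_symmetry_v} as $v_t=-\mathscr M(v)$, the $L^2$-gradient flow of $\mathscr E$ on $\{\partial_r v=v\}$. It checks that $0$ is Morse--Bott, since near $0$ the critical set is the family $q_a$ tangent to $\ker\mathscr E''(0)=\operatorname{span}\{x_1,x_2\}$ (Claim \ref{claim_morse_bott}). It then uses the Feehan--Maridakis \L{}ojasiewicz--Simon inequality with exponent $\tfrac12$ to get exponential decay of $\mathscr E(v(t))$, hence of $\|v\|_{L^2}$, and interpolates exactly as in your fallback for Step 3. The null directions $x_1,x_2$ are the real obstacle for a sign-free argument; positivity plus the principal eigenfunction never sees them.

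For the paper's only use of the lemma, Proposition \ref{prop_g_symmetry}, where \eqref{2DRF} includes $R\ge 0$, your restricted version is enough. It buys a lot: no abstract gradient-inequality machinery, no Morse--Bott check, and any rate $b<\mu_0$. That rate is essentially sharp, whereas the rate from the \L{}ojasiewicz constant is inexplicit.

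\textbf{Sign consistency in Step 1.} You combine the geometric sign with the displayed equation without checking them against each other. Your proof of $v\ge0$ uses $v=u^{-1/2}-1$ and $u_t=-Ru=\Delta\log u$ in the flat gauge. From $u=(1+v)^{-2}$ one then gets $(1+v)^{-3}v_t=\Delta\log(1+v)$. This is consistent with the paper's own $w_t=w^3(\Delta_0\log w+1)$ in the spherical gauge, and it gives $v_t=(1+v)^2\Delta v-(1+v)|\nabla v|^2$ with $\partial_r v=v$, not the displayed exponents. In divergence form this reads $v_t=\operatorname{div}\bigl((1+v)^2\nabla v\bigr)-3(1+v)|\nabla v|^2$. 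The remainder now has the unfavorable sign, so your Step 1 inequality reverses. The paper's argument is insensitive to this, since the corrected equation is $v_t=-(1+v)^4\mathscr M(v)$, still a weighted gradient flow of $\mathscr E$.

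Your idea survives with a small change.
\begin{itemize}
\item Set $\ell=\log(1+v)\ge 0$. Then $\partial_t\bigl(-\tfrac12e^{-2\ell}\bigr)=\Delta\ell$ and $\partial_r\ell=1-e^{-\ell}$.
\item Test against the positive principal eigenfunction $\phi_\delta$ of $\Delta\phi=\mu_\delta\phi$, $\partial_r\phi=(1-\delta)\phi$. Using the constant test function in the Rayleigh quotient gives $\mu_\delta\ge 2(1-\delta)>0$.
\item The boundary term $\int_{\partial\bD}\phi_\delta\bigl(1-e^{-\ell}-(1-\delta)\ell\bigr)$ is nonnegative once $\ell\le 2\delta$.
\item Hence $N(t)=\tfrac12\int_{\bD}\phi_\delta(1-e^{-2\ell})$ satisfies $N'\ge\mu_\delta\int_{\bD}\phi_\delta\ell\ge\mu_\delta N$, which gives exponential $L^1$ decay.
\end{itemize}
Your Step 3 then applies unchanged.
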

\begin{proof}
We will apply \cite[Theorem 4]{Feehan-Maridakis} in the case where the hypotheses of \cite[Theorem 2]{Feehan-Maridakis} are also satisfied. We begin by verifying these hypotheses are satisfied, in the notation of those results.

We define
\[
\mathscr{X}=\{v\in H^2(\bD):~\partial_r v=v\text{ on }\partial\bD\},\quad\tilde{\mathscr{X}}=L^2(\bD),
\]
where the Robin boundary condition defining $\mathscr{X}$ is well-defined in the trace-sense. Note that $\mathscr{X}$ is a closed subspace of the Banach space $H^2(\bD)$, and therefore also a Banach space (with the same norm). Therefore we have the Banach space embeddings
\[
\mathscr{X}\subset\tilde{\mathscr{X}}\subset\mathscr{X}^*,
\]
where the second embedding holds since $f\in L^2(\bD)$ acts on $v\in\mathscr{X}$ by $v\mapsto\int_{\bD} fv$. Moreover, the embedding $\mathscr{X}\subset\mathscr{X}^*$ is definite in the sense defined in \cite[p. 4]{Feehan-Maridakis} because the bilinear form $\mathscr{X}\times\mathscr{X}\rightarrow\mathbb{R}$, $(u,u)\mapsto\int_{\bD} u^2$ is positive-definite. Next we define the open subset $\mathscr{U}\subset\mathscr{X}$ by
\[
\mathscr{U}=\{v\in\mathscr{X}:~\|v\|_{H^2(\mathbb{D})}<\rho\},
\]
for a $\rho>0$ chosen small enough so that $\|v\|_\infty<\frac{1}{2}$ by Sobolev embedding. Therefore the function
\[
\mathscr{E}:\mathscr{U}\longrightarrow\mathbb{R},\qquad v\mapsto\frac{1}{2}\int_{\bD}\frac{|\nabla v|^2}{(1+v)^2}-\int_{\partial\bD}\left(\frac{1}{1+v}+\log(1+v)-1\right)
\]
is well-defined and real-analytic and thus a fortiori $C^2$, since the restriction $\|v\|_\infty<\frac{1}{2}$ allows us to expand $\frac{1}{(1+v)^2}$, $\frac{1}{1+v}$, and $\log(1+v)$ as uniformly convergent power series. We can then compute that the Fr\'{e}chet derivative of $\mathscr{E}$ at $v\in\mathscr{U}$ is given by
\begin{align*}
\mathscr{E}'(v)[h]&=\int_{\bD}\frac{\nabla v}{(1+v)^2}\cdot\nabla h-\frac{|\nabla v|^2}{(1+v)^3}h-\int_{\partial\bD}\frac{v}{(1+v)^2}h
\\
&=\int_{\bD}\left(-(1+v)^{-2}\Delta v+(1+v)^{-3}|\nabla v|^2\right) h.
\end{align*}
This implies that
\[
\mathscr{M}:\mathscr{U}\longrightarrow\tilde{\mathscr{X}},\qquad v\mapsto -(1+v)^{-2}\Delta v+(1+v)^{-3}|\nabla v|^2
\]
is a gradient map with potential $\mathscr{E}$, in the sense of \cite[Definition 1.4]{Feehan-Maridakis}, having used Sobolev embedding to ensure that the image of $\mathscr{U}$ indeed lies in $\tilde{\mathscr{X}}$. Moreover, $\mathscr{M}$ is real-analytic on its domain $\mathscr{U}$ because again we can expand $(1+v)^{-2}$ and $(1+v)^{-3}$ as uniformly convergent power series. Now we take $x_\infty\in\mathscr{U}$ to be the function $v\equiv 0$, which clearly satisfies $\mathscr{M}(x_\infty)=0$. We therefore have that
\[
\mathscr{M}'(x_\infty):\mathscr{X}\longrightarrow\tilde{\mathscr{X}},\qquad h\mapsto -\Delta h,
\]
and we recall that $h\in\mathscr{X}$ satisfies the Robin boundary condition $\partial_r h=h$. In particular by standard arguments $\mathscr{M}'$ is Fredholm with Fredholm index $0$. Moreover,
we have a basis of $L^2(\bD)$ given by eigenfunctions $\{f_i\}_{i=1}^\infty$ satisfying
\begin{equation}\label{eq_robin_eigenvalues}
\begin{cases}
\Delta f_i=\mu_i f_i\quad&\text{in }\old,
\\
\partial_r f_i=f_i\quad&\text{on }\partial \bD,
\end{cases}
\end{equation}
with $\mu_0>\mu_1\geq\mu_2\geq\mu_3\geq\cdots$ just as we discussed before in Section \ref{subsec_eigenfunction}, since this is the same eigenvalue problem as \eqref{critrobin}. The last hypothesis we need to check is that $\mathscr{E}$ is Morse--Bott at $x_\infty\in\mathscr{U}$ in the sense of \cite[Definition 1.10]{Feehan-Maridakis}. This is the content of Claim \ref{claim_morse_bott} below.
\begin{claim}\label{claim_morse_bott}
After possibly shrinking $\mathscr{U}$, we have that the critical set $\mathrm{Crit}\mathscr{E}=\{x\in\mathscr{U}:~\mathscr{E}'(x)=0\}$ is an open smooth submanifold of $\mathscr{U}$ and
\[
(T\mathrm{Crit}\mathscr{E})_{x_\infty}=\ker\mathscr{E}''(x_\infty)
\]
\end{claim}
\begin{proof}
Recall from Section \ref{subsec_eigenfunction} that the eigenvalues $\mu_i$ of \eqref{eq_robin_eigenvalues} satisfy
\[
\mu_0>0,\quad\mu_1=\mu_2=0,\quad\mu_i<0\text{ for all }i\geq 3,
\]
and the $\mu=0$ eigenspace is spanned by the two Euclidean coordinate functions on $\old$; hence $\ker\mathscr{E}''(x_\infty)=\mathrm{span}\{x_1,x_2\}$. On the other hand, from the automorphisms of $\bD\subset\mathbb{C}$ we find that the family of functions $q_a$ parametrized by $a\in\mathrm{Int}(\old)$ and defined by
\[
q_a:\bD\longrightarrow\mathbb{R},\qquad x\mapsto \frac{-2a\cdot x+|a|^2(1+|x|^2)}{1-|a|^2}
\]
belong to $\mathrm{Crit}\mathscr{E}$ whenever $q_a\in\mathscr{U}$, and the set $\mathscr{U}\cap\{q_a\}$ is an open smooth submanifold of $\mathscr{U}$ tangent to $\ker\mathscr{E}''(x_\infty)$ at $x_\infty$ if we take $\mathscr{U}$ sufficiently small.

So it remains only to show that $\mathscr{U}\cap\mathrm{Crit}\mathscr{E}=\mathscr{U}\cap\{q_a\}$, if $\mathscr{U}$ is taken small enough. Define $K=\ker\mathscr{E}''(x_\infty)$ and let $K^\perp$ be its $L^2(\bD)$ orthogonal complement, so that with $H=\mathscr{X}\cap K^\perp$ we have the direct sum decomposition $\mathscr{X}=K\oplus H$. Now consider the map
\[
F:K\times H\longrightarrow K^\perp,\qquad (k,h)\mapsto P_{K^\perp}(\mathscr{M}(k+h)),
\]
where $P_{K^\perp}$ denotes the orthogonal projection in $L^2(\bD)$ onto $K^\perp$. We therefore have that
\[
D_hF(0,0)=-\Delta|_{H}:H\rightarrow K^\perp
\]
is invertible, so by the implicit function theorem if we shrink $\mathscr{U}$ appropriately then there exists a smooth map $G:K\cap\mathscr{U}\rightarrow H$ so that
\[
\mathscr{U}\cap\mathrm{Crit}\mathscr{E}=\{k+h\in\mathscr{U}:F(k+h)=0\}=\{k+G(k)\in\mathscr{U}\}.
\]
In particular, the last equality implies that $\mathscr{U}\cap\mathrm{Crit}\mathscr{E}=\mathscr{U}\cap\{q_a\}$ as desired
\end{proof}
We are now in a position to apply the results of \cite{Feehan-Maridakis} using the definitions set up above. Let $v:\old\times(-\infty,0)\rightarrow\mathbb{R}$ satisfy \eqref{eq_symmetry_v}, with $v(\cdot,t)\xrightarrow{C^\infty(\old)} 0$ as $t\rightarrow -\infty$. First, note in the context of our new notation that our original evolution equation \eqref{eq_symmetry_v} says that $v_t=-\mathscr{M}(v)$, where $\mathscr{M}$ is the gradient of the energy $\mathscr{E}$. So we also have
\begin{equation}\label{eq_energy_evolution}
\frac{d}{dt}\mathscr{E}(v)=-\int_{\bD}\mathscr{M}(v)^2=-\int_{\bD}|v_t|^2\leq 0,
\end{equation}
which in combination with the integral Minkowski inequality and the smooth convergence of $v(\cdot,t)$ to $0$ as $t\rightarrow-\infty$ implies that
\begin{equation}\label{eq_v_L2}
\|v(\cdot,t)\|_{L^2(\mathbb{D})}\leq\int_{-\infty}^t\|v_\tau\|_{L^2(\mathbb{D})}~d\tau=-\mathscr{E}(v(\cdot,t)).
\end{equation}
Note in particular that $\mathscr{E}(v(\cdot,t))\leq 0$. Now \cite[Theorems 2 and Theorem 4]{Feehan-Maridakis} give us constants $Z>0$ and $\sigma>0$ such that if $x\in\mathscr{U}$, then
\begin{equation}\label{eq_simon}
\|x-x_\infty\|_{\mathscr{X}}<\sigma\quad \implies \quad \|\mathscr{M}(x)\|_{\tilde{\mathscr{X}}}\geq|\mathscr{E}(x)-\mathscr{E}(x_\infty)|^\frac{1}{2}.
\end{equation}
Since as set up above, $x_\infty$ is the identically vanishing backward smooth limit of $v(\cdot,t)$ with $\mathscr{E}(x_\infty)=0$, we deduce from \eqref{eq_energy_evolution} and \eqref{eq_simon} the existence of some $S<0$ such that for all $\tau\leq S$,
\[
\frac{d}{dt}\mathscr{E}(v(\cdot,\tau))\leq -Z^2|\mathscr{E}(v(\cdot,\tau))|=-Z^2\mathscr{E}(v(\cdot,\tau)),
\]
or equivalently that $\frac{d}{dt}|\mathscr{E}(v(\cdot,\tau))|\geq -Z^2|\mathscr{E}(v(\cdot,\tau))|$. Integrating backwards from time $S$ therefore yields
\begin{equation}\label{eq_energy_decay}
|\mathscr{E}(\cdot,t)|\leq e^{Z^2(t+T)}|\mathscr{E}(\cdot,S)|,
\end{equation}
and combining with \eqref{eq_v_L2} yields the estimate
\[
\|v(\cdot,t)\|_{L^2(\mathbb{D})}\leq C e^{Z^2 t},
\]
for some constant $C>0$. Finally, using this along the smooth convergence of $v(\cdot,t)$ to $0$ as $t\rightarrow -\infty$ yields by interpolation of Sobolev norms (see for instance \cite[Theorem 5.2]{Fournier-Adams}) the existence of some constants $a,b>0$ such that for all $t\leq S$ we have $\|v(\cdot,t)\|_{C^2(\old)}\leq a e^{bt}$.
\end{proof}

\begin{proof}[Proof of Proposition \ref{prop_symmetry}]
Let $T<0$ and $S<0$ be as in Lemmas \ref{lem_integral_estimate} and \ref{lem_decay_estimate}, respectively. Then for all $t\leq\min\{T,S\}$ we deduce by combining the growth estimate of $\|\phi\|_{L^\infty(\old)}$ in \eqref{eq_integral_estimate} and the decay estimate of $\|v\|_{C^2(\old)}$ in \eqref{eq_decay_estimate} that $\|\phi(\cdot,t)\|_{L^\infty(\old)}=0$. This implies that $v(\cdot,t)$ is rotationally symmetric for all $t\leq \min\{T,S\}$, and finally the short-time existence and uniqueness theory for \eqref{eq_symmetry_v} implies that $v(\cdot,t)$ is rotationally symmetric for all $t<0$.
\end{proof}

\section{Uniqueness of the rotationally symmetric ancient solution}\label{uniquenesssection}
In this section, we prove that the ancient flow we constructed in Section \ref{construction of arf} is unique within the class of rotationally symmetric solutions with prescribed backward limit the flat disk. Combining this with Proposition \ref{bdddiam} and Proposition \ref{prop_g_symmetry}, we then conclude that our constructed ancient solution is the unique solution to \eqref{2DRF} with positive curvature and bounded diameter, modulo a time-independent diffeomorphism and a time-shift.

\subsection{Sharp asymptotics and spectral analysis.}
We take $g(\psi,\theta,t) = u(\psi,t)g_{0}$ to be the ancient radially symmetric {Ricci flow with boundary} solving \eqref{2DRF} which we constructed in Section \ref{construction of arf}. To study its uniqueness, we must obtain its precise asymptotics near the time $t=-\infty$. We shall use the profile function
$$v(\psi,t) := \frac{1}{\sqrt{u(\psi,t)}} - \frac{1}{\sqrt{u_\fl(\psi)}}$$
to describe the asymptotic convergence.
Notice that
\begin{align}
    &\bigg[\log\bigg(\frac{1}{\sqrt{u(\psi,t)}} - \frac{1}{\sqrt{u_\fl(\psi)}}\bigg) - \lda_0^2 t\bigg]_t\notag\\
    &= \frac{u^{-3/2}(\Delta_0 \log(u^{-1/2}) + 1)}{\frac{1}{\sqrt{u}} - \frac{1}{\sqrt{u_\fl}}} -\lda_0^2\notag\\
    &=\frac{1-\frac{1}{2}\log \Delta_0 u}{u - u\sqrt{\frac{u}{u_\fl}}}-\lda_0^2\notag\\
    &= \frac{R(\psi,t)}{2u^{\frac{1}{2}}(\psi,t)}\cdot \frac{1}{u^{-\frac{1}{2}}(\psi,t) - u_\fl^{-\frac{1}{2}}(\psi)} -\lda_0^2, \label{R/h}
\end{align}
where the last equality is due to the formula $uR = 2-\Delta_0\log u$. To obtain the desired unique exponential asymptotics of the ancient flow near the flat disk, we must have
\begin{equation}\label{ldaconv}
    \frac{R(\psi,t)}{h(\psi,t)} := \frac{R(\psi,t)}{u^{\frac{1}{2}}(\psi,t)}\cdot \frac{1}{u^{-\frac{1}{2}}(\psi,t) - u_\fl^{-\frac{1}{2}}(\psi)} \stackrel{t\ra-\infty}{\longrightarrow} 2\lda_0^2
\end{equation}
for {$\lda_0^2 = k^2\mu_0>k^2$}. This is because, by a linearization of the operator acting on $v$ near $t=-\infty$, (\ref{ldaconv}) should give us
$$v(\psi,t)e^{-\lda_0^2 t} = \parent{\frac{1}{\sqrt{u(\psi,t)}} - \frac{1}{\sqrt{u_\fl(\psi)}}}e^{-\lda_0^2 t} \stackrel{t\ra-\infty}{\longrightarrow} Af_0(\psi)$$
where $f_0(\psi)$ is the positive eigenfunction to the linearized operator discussed in Section \ref{robinbcspectrum} and $A>0$ some constant. To obtain (\ref{ldaconv}), we need two lemmas:
\begin{enumerate}
    \item $(\log v(t) - \lda_0^2 t)_t >0$ for all $t<0$, and
    \item $(\log v(t) - \lda_0^2 t)_t \leq f(t) \in L^1(-\infty,T) $ for some $T<0$.
\end{enumerate}
The first monotonicity lemma is to ensure that the limit
$$\lim_{t\ra -\infty}v(\psi,t)e^{-\lda_0^2 t} \in [0,\infty)$$
indeed exists; the second is to show that, via integrating $(\log v - \lda_0^2 t)_t$ from $t=-\infty$ to $T$, the presumed asymptotics are in fact sharp, i.e.
$$\lim_{t\ra -\infty}v(\psi,t)e^{-\lda_0^2 t} \in (0,\infty).$$
\begin{lemma}
    For our constructed ancient solution $u(\psi,t)$, we have
    $$\bigg[\log\bigg(\frac{1}{\sqrt{u(\psi,t)}} - \frac{1}{\sqrt{u_\fl(\psi)}}\bigg) - \lda_0^2 t\bigg]_t > 0$$
    for all $t<0$.
\end{lemma}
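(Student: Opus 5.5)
The plan is to prove the inequality for the old solutions $u_i$ of Section \ref{construction of arf}, uniformly in $i$, and then pass to the limit $i\to\infty$. The quantity in the statement is $\partial_t\log v-\lda_0^2=v_t/v-\lda_0^2$, and $v>0$. Indeed, $v>0$ follows from $u<u_\fl$, which holds because $u_t<0$ and $u\to u_\fl$ as $t\to-\infty$. So the claim is equivalent to
\[
Z:=v_t-\lda_0^2 v>0 .
\]
Writing $w=u^{-1/2}=w_\infty+v$, we have $w_t=\tfrac12 Rw$, so $Z=\tfrac12 Rw-\lda_0^2 v$. For each old solution I set $Z_i:=\tfrac12 R_iw_i-\lda_0^2v_i$, with $v_i=w_i-w_\infty$, and aim to show $Z_i\ge 0$ on $\old\times[T_i,0)$.

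\textbf{Step 1 (initial time).} At $t=T_i$ we have $w_i=w_\infty(1+\eps f_0)$ with $\eps=\eps(\tau_i)$, hence $v_i=\eps w_\infty f_0$. The curvature formula computed in Section \ref{construction of arf} gives $R=2k^2\eps\big(\mu_0f_0+\eps(\mu_0f_0^2-(f_0')^2)\big)$. Therefore
\[
\tfrac12 R w_i=k^2\eps w_\infty(1+\eps f_0)\big(\mu_0f_0+\eps(\mu_0f_0^2-(f_0')^2)\big)\ \ge\ k^2\mu_0\,\eps\, w_\infty f_0=\lda_0^2 v_i ,
\]
where the inequality uses Lemma \ref{f_0'(r)} and $f_0>0$. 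So $Z_i(\cdot,T_i)\ge0$, with strict inequality away from degenerate points.

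\textbf{Step 2 (boundary condition and evolution).} Since $\partial_{\nu_0}v\equiv0$ for all $t$ (equation \eqref{eq_profile}), differentiating in $t$ gives $\partial_{\nu_0}v_t=0$, and hence $\partial_{\nu_0}Z=0$: a homogeneous Neumann condition. Write the flow as $v_t=\mathcal F(v):=w^3(\Delta_0\log w+1)$. Then $(v_t)_t=D\mathcal F(v)[v_t]$, and so
\[
Z_t=D\mathcal F(v)[Z]+\lda_0^2\big(D\mathcal F(v)[v]-\mathcal F(v)\big).
\]
I would compute the defect $E:=D\mathcal F(v)[v]-\mathcal F(v)$ explicitly, using $\mathcal F(v)=w^2\Delta_0 w-w|\nabla^0 w|^2+w^3$ (the same structure $F$ that appears in the proof of Lemma \ref{maxtime}). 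The aim is to express $E$ as a zeroth-order multiple of $Z$ plus a term of favourable sign. The background $w_\infty$ is a stationary solution, so $\mathcal F(0)=0$. The defect then reduces to terms quadratic in $v$ together with terms involving $w_\infty$ and $v$. I would rewrite these using $\Delta_0 w$ expressed through $R$ and $v_t=\tfrac12Rw$, so that the remainder is controlled by $Z$ and by $R\ge0$, $u_\psi\ge0$, $R_\psi\ge0$ (Lemmas \ref{confloc}, \ref{curvloc}, \ref{olddataproperties}). If a direct sign does not emerge, the fallback is to work with the quotient $Q=v_t/v=\tfrac{Rw}{2v}$ instead. One then derives its equation from the $R$-equation and the $v$-equation, and shows $\min Q$ is nondecreasing whenever $\min Q\le\lda_0^2$. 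Here the Neumann condition for $Q$ again follows from $\partial_{\nu_0}v=0$ and $\partial_{\nu_0}v_t=0$.

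\textbf{Step 3 (maximum principle and limit).} With the equation for $Z_i$ (or $Q_i$) in a form $\partial_t Z\ge A\Delta_0Z+B\cdot\nabla^0Z+CZ$, where $A>0$ and the Neumann condition holds, the argument of Lemma \ref{maxtime} (multiplying by $e^{-\Lda t}$ and applying the Hopf lemma at the boundary) gives $Z_i\ge0$ on $[T_i,0)$. The estimates of Lemmas \ref{C0bound} and \ref{C0curvbound} give smooth convergence $v_i\to v$ and $\partial_tv_i\to v_t$ on compact time intervals, so $Z\ge0$ for the limit solution. Finally, $Z\not\equiv0$, since otherwise $v=Ae^{\lda_0^2t}\times$(a fixed profile) would force $\mathcal F(v)=D\mathcal F(0)[v]$, which is impossible for a nonlinear nonstationary solution. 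The strong maximum principle together with the Hopf lemma then upgrades this to $Z>0$ for all $t<0$.

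I expect the main obstacle to be Step 2, namely identifying the sign of the defect $E=D\mathcal F(v)[v]-\mathcal F(v)$, equivalently finding the right combination of the $R$- and $v$-equations so that no bad zeroth-order source term survives. I would attempt the quotient formulation first if the direct computation produces mixed-sign gradient terms.
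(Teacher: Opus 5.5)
\textbf{Gap in Step 2: the sign of your defect is the whole content of the lemma.} Your reduction to $Z=v_t-\lambda_0^2v$ with $\partial_{\nu_0}Z=0$ is fine, and so is the initial-time check. But the tools you propose for Step 2 ($R\ge0$, $u_\psi\ge0$, $R_\psi\ge0$) cannot close it.

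The defect can be computed exactly. Put $G(w)=w^3(\Delta_0\log w+1)$. Since $G$ is $3$-homogeneous, $D\mathcal F(v)[v]-\mathcal F(v)=2G(w)-DG(w)[w_\infty]$. Now use
\begin{itemize}
\item $G(w)=w_t=\tfrac12Rw$;
\item $w_\infty=w(1-h)$, where $h=v/w=1-\sqrt{u/u_\fl}$;
\item $h_t=\tfrac12R(1-h)$ and $(\partial_t-\Delta)h=\frac{|\nabla h|^2}{1-h}$, with norms and Laplacian taken with respect to $g(t)$.
\end{itemize}
These give
\[
Z_t=D\mathcal F(v)[Z]+\lambda_0^2\,v\,E,\qquad E:=R-\frac{|\nabla h|^2}{h(1-h)} .
\]
So the forcing term has exactly the sign of $E$. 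Nothing in $R\ge0$, $u_\psi\ge0$, $R_\psi\ge0$ forces $R\,h(1-h)\ge|\nabla h|^2$ pointwise.

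Splitting off a multiple of $Z$ does not help. One has $vE=2hZ+v\big(2\lambda_0^2h-\frac{|\nabla h|^2}{h(1-h)}\big)$, and the remainder's sign is an unproved gradient bound $|\nabla\log h|^2\le2\lambda_0^2(1-h)$. Your fallback meets the same obstruction. The quotient $Q=v_t/v=R/(2h)$ satisfies $(\partial_t-\Delta)Q=\frac2h\langle\nabla h,\nabla Q\rangle+Q\,E$. The restriction $\min Q\le\lambda_0^2$ only bounds $E=2hQ-\frac{|\nabla h|^2}{h(1-h)}$ from above, not below. So Step 2 is not a routine computation left to do.

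\textbf{The missing ingredient.} You need a separate maximum-principle argument proving $E\ge0$ on the old solutions, which is what the paper supplies.
\begin{itemize}
\item A Bochner computation for $f=\log h$, with a completed square, gives $(\partial_t-\Delta)E\ge E^2$ (Lemma \ref{evolution of E}).
\item The boundary condition $h_\nu=kh$ gives $\partial_\nu E>0$ on $\partial\bD$, so $E$ cannot attain a negative minimum there.
\item On the initial data $U(\cdot,\tau_i)$ one has $E=\eps(\tau_i)\,k^2\big(2\mu_0f_0-(\partial_rf_0)^2/f_0\big)+O(\eps(\tau_i)^2)$. This is positive for $\tau_i$ sufficiently negative, by Lemma \ref{f_0'(r)}.
\end{itemize}
Once $E\ge0$ is available, your $Z$-formulation works and is essentially equivalent to the paper's proof. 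The paper runs the maximum principle on $R/h$, using $\partial_\nu(R/h)=0$ and the reaction term $\frac Rh E\ge0$.

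Your strictness argument also needs $E$. If $Z\equiv0$ on a time interval, then $vE\equiv0$, so $E\equiv0$. The identity in Lemma \ref{evolution of E} then forces $R|\nabla h|^2\equiv0$, hence $\nabla h\equiv0$, hence $E=R>0$, which is a contradiction. The claim that $\mathcal F(v)=D\mathcal F(0)[v]$ would follow from $Z\equiv0$ does not come out of your Step 3 argument as written.
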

\begin{proof}
Taking the boundary derivative $\partial_{\nu_0}$ of the expression in the last line, we have
\begin{align}
    &\partial_{\nu_0}\bigg(\frac{R(\psi,t)}{2u^{\frac{1}{2}}(\psi,t)}\cdot \frac{1}{u^{-\frac{1}{2}}(\psi,t) - u_\fl^{-\frac{1}{2}}(\psi)}\bigg)\nonumber\\
    &= \frac{1}{2}\parent{\frac{R_\psi(\frac{\pi}{2},t)}{\sqrt{u}(\frac{\pi}{2},t)} - \frac{R(\frac{\pi}{2},t)u_\psi(\frac{\pi}{2},t)}{2u^{3/2}(\frac{\pi}{2},t)}}\cdot \frac{1}{u^{-\frac{1}{2}}(\hpi,t) - u_\fl^{-\frac{1}{2}}(\hpi)}\nonumber\\
    &= \frac{1}{2}\parent{\frac{kR\sqrt{u}(\frac{\pi}{2},t)}{\sqrt{u}(\frac{\pi}{2},t)} - \frac{kR(\frac{\pi}{2},t)u^{\frac{3}{2}}(\frac{\pi}{2},t)}{u^{3/2}(\frac{\pi}{2},t)}}\cdot \frac{1}{u^{-\frac{1}{2}}(\hpi,t) - u_\fl^{-\frac{1}{2}}(\hpi)} \stackrel{BC}{=} \textbf{0.} \label{heightBC}
\end{align}
This implies that we can work with the maximum principle on the quantity
$$\frac{R(t)}{\sqrt{u(t)}}\cdot (u^{-1/2}(t) - u_\fl^{-1/2})^{-1} =: Rh^{-1}.$$
Drawing analogies to mean curvature flow, the profile function
\begin{equation}\label{eq_h}
h(\psi,t) = 1 - \sqrt{\frac{u(\psi,t)}{u_\fl(\psi)}}>0
\end{equation}
corresponds to the ``graphical height'' of the metric $u(\psi,t)$ from the flat disk metric $u_\fl(\psi)$ which is our backward limit. Note that a direct computation plus the inequality (\ref{gradu}) yields $h_\psi\geq 0$:
\begin{align*}
    \partial_\psi h &= \parent{1-\sqrt{u(\psi,t)}\cdot k(1+\cos\psi)}_\psi\\
    &= -\frac{1}{2}\frac{u_\psi(\psi,t)}{\sqrt{u(\psi,t)}}\cdot k(1+\cos\psi) + k\sin\psi\sqrt{u(\psi,t)}\\
    &\stackrel{\eqref{gradu}}{\geq} \parent{-k\tan\parent{\frac{\psi}{2}}(1+\cos\psi) +k\sin\psi}\sqrt{u(\psi,t)} = 0.
\end{align*}
To apply maximum principle on $R/h$, we first compute the Laplacian of $R/h$ with respect to the time-dependent Laplacian $\Delta$ \eqref{time dependent lg}.
Via direct computation, one gets
\begin{align*}
    \Delta \sqrt{\frac{u}{u_\fl}} &= \frac{1}{u}\cdot \sqrt{\frac{u}{u_\fl}}\parent{\frac{1}{2}\big(\Delta_0 \log u - 2\big) + \frac{1}{4}\norm{\nabla_0 \log \frac{u}{u_\fl}}_0^2}\\
    &= \sqrt{\frac{u}{u_\fl}}\parent{\frac{1}{2}\big(\Delta \log u - \frac{2}{u}\big) + \frac{1}{4u}\norm{\nabla_0 \log \frac{u}{u_\fl}}_0^2}\\
    &= \sqrt{\frac{u}{u_\fl}}\parent{-\frac{R}{2} + \norm{\nabla \log\sqrt{\frac{u}{u_\fl}}}^2}.
\end{align*}
Since
\begin{align*}
    \partial_{\nu_{0}}\frac{\sqrt{u}}{\sqrt{u_\fl}} &= \frac{1}{2}\cdot\frac{u_{\nu_0}}{\sqrt{uu_\fl}} - \frac{1}{2}\cdot\frac{\sqrt{u}\cdot (u_\fl)_{\nu_0}}{u_\fl^{3/2}}\\
    &= \frac{1}{2}\parent{\frac{2ku}{\sqrt{u_\fl}} - 2k\sqrt{u}}\\
    &= k\sqrt{u}\parent{\frac{\sqrt{u}}{\sqrt{u_\fl}} - 1} <0,
\end{align*}
it follows that the spatial maximum of $\sqrt{u(t)/u_\fl}$ (therefore the spatial minimum of $h$) cannot be achieved on the boundary.
Also note that
$$\partial_t \sqrt{\frac{u(t)}{u_\fl}} =\sqrt{\frac{u}{u_\fl}}\cdot \frac{1}{2}\parent{\Delta \log u - \frac{2}{u}}.$$
Then $h(\psi,t)$ satisfies the below equation:
\begin{equation}\label{h evolution}
    \begin{cases}
    (\partial_t - \Delta)h = \frac{|\nabla h|_t^2}{1-h}\\
    h_{\nu_t} = \frac{1}{\sqrt{u(t)}}h_{\nu_0} = kh >0.
\end{cases}
\end{equation}
The scalar curvature $R = R(\psi,t)$ satisfies the evolution equation
$$(\partial_t - \Delta)R = R^2.$$
Combining these two, we obtain
\begin{align*}
    (\partial_t - \Delta)(Rh^{-1}) = \frac{2}{h}\left\langle \nabla h,\nabla \parent{\frac{R}{h}}\right\rangle + \parent{h\parent{\frac{R}{h}}^2 - {\frac{R}{h^2(1-h)}\norm{\nabla h}^2}}.
\end{align*}
To apply the maximum principle on $Rh^{-1}(\psi,t)$, we need to prove the following: 
$$R(t) \geq \frac{\norm{\nabla h}^2(t)}{h(t)(1-h(t))} \text{ for all }t<0.$$
To do this, we use a maximum principle on the quantity
\begin{equation}\label{eq_E}
E(t) := R(t) - \frac{\norm{\nabla h}^2}{h(1-h)}
\end{equation}
for old solutions $g_i(t) = u_i(t)g_0$. Indeed, by Lemma \ref{evolution of E} and computing the boundary condition, we have 
$$\begin{cases}
    (\partial_t - \Delta)E \geq E^2,\\
    \partial_{\nu_{g(t)}}E = k^3\frac{h}{1-h}\parent{\frac{1}{1-h} +2} >0.
\end{cases}$$
The above equation implies that $E$ cannot attain spatial minimum on the boundary. In the following, we verify that the quantity $E$ is nonnegative on the initial data $g_i(\tau_i) =  U(\cdot,\tau_i)g_0 =\frac{d\psi^2+ \sin^2\psi \;d\theta^2}{w_\infty^2(1+e^{k^2\mu_0\tau_i}f_0(\psi))^2}$ of the old solutions $g_i$. The height function $h_i$ of $g_i(\tau_i)$ is equal to
$$h_i = 1-\frac{\sqrt{U(\tau_i)}}{\sqrt{u_\fl}} = \frac{e^{\mu_0k^2\tau_i}}{1+e^{\mu_0k^2\tau_i}f_0} = \frac{\eps(\tau_i)}{1+\eps(\tau_i)f_0}.$$
Recall that the scalar curvature of the metric $U(\tau_i)$ is given by
$$R_i(r) =  2k^2\eps(t)\parent{\mu_0f_0(r) + \eps(t)(\mu_0f_0^2(r)-(\partial_rf_0)^2)}.$$
Then we have
$$E_i := E(U(\cdot,\tau_i)) = \eps(\tau_i)\parent{2k^2\mu_0f_0 - k^2(\partial_rf_0)^2/f_0} + O(\eps(\tau_i))^2$$
which is positive for sufficiently small $\tau_i <<-1$: {Lemma \ref{f_0'(r)}} guarantees the positivity of the first term. Therefore, for all old solutions, $E(t)$ remains nonnegative throughout the evolution. Then from the maximum principle it follows that
$$\partial_t \min_\psi \parent{\frac{R_i(t)}{h_i(t)}} \geq 0,$$
implying
$$\inf_{t\in (-\infty,0),\psi\in [0,\pi/2)} \frac{R(\cdot,t)}{h(\cdot,t)} = \lim_{t\ra -\infty}\frac{R(\cdot,t)}{h(\cdot,t)} \swarrow \frac{R_i(\cdot,T_i)}{h_i(\cdot,T_i)} \geq 2\lda_i^2.$$
Below we compute $R(U(t))/h(t)$.
\begin{align*}
    \frac{R(U(r,t))}{h(U(r,t))} &= \frac{R(U(r,t))}{1-\sqrt{\frac{u_\fl(r)}{U(r,t)}}}\\
    &=\frac{2k^2\eps(t)\parent{\mu_0f_0(r) + \eps(t)(\mu_0f_0^2(r)-(\partial_rf_0)^2)}}{1- \frac{1}{1+\eps(t)f_0(r)}}\\
    &= 2k^2(1+\eps(t)f_0(r))\frac{\mu_0f_0(r) + \eps(t)(\mu_0f_0^2(r)-(\partial_rf_0)^2)}{f_0(r)}\\
    &= 2k^2\mu_0 + 2k^2\eps(t)\frac{\mu_0f_0^2 + (1+\eps(t))(\mu_0f_0^2(r) - (\partial_rf_0)^2)}{f_0(r)}\\
    &\geq 2k^2\mu_0 + 2k^2\eps(t)\mu_0f_0(r).
\end{align*}
As the constructed old solutions satisfy $$R_i/h_i \geq 2\lda_i^2 = \frac{R_i(0)}{h_i(0)},\quad \lim_{i\to\infty} \lda_i^2 = k^2\mu_0 = \lda_0^2,$$ taking a limit $i\ra \infty$ completes the proof.
\end{proof}
The above lemma therefore yields the monotonicity of the quantity
$$\parent{\frac{1}{\sqrt{u(t)}} - \frac{1}{\sqrt{u_\fl}}}e^{-\lda_0^2 t}$$
which implies the existence of the limit
$$\lim_{t\ra-\infty} v(\psi,t)e^{-\lda_0^2 t} \in [0,\infty).$$
Now we show that the above limit is indeed positive.
\begin{lemma}\label{R uperbound estimates}
    There exist constants $T>-\infty$ and $C>0$ such that, for sufficiently old solutions (hence so for our constructed ancient solution), we have
    $$R(t) \leq Ce^{2k^2t},\; \forall t<T.$$
\end{lemma}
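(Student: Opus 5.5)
The plan is to turn the conformal-factor estimate of Lemma \ref{scalandconformal} into a pointwise curvature estimate. I would use the boundary-length evolution together with time-monotonicity of the boundary curvature, and keep a barrier argument for $R$ in reserve. Throughout, $g_i(t)=u_i(t)g_0$ are the old solutions. By Lemma \ref{curvloc} and Lemma \ref{olddataproperties}, $R_i(\cdot,t)$ attains its maximum on $\{\psi=\hpi\}$, where it is constant by rotational symmetry. So it suffices to bound $R_i(\hpi,t)$ uniformly in $i$.

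\textbf{Step 1: length deficit.} The boundary length is $L_i(t)=2\pi\sqrt{u_i(\hpi,t)}$. The second inequality of Lemma \ref{scalandconformal} gives $k^2u_i(\hpi,t)\geq 1-e^{2k^2t}$. Hence
\[
\frac{2\pi}{k}-L_i(t)\leq \frac{2\pi}{k}\Big(1-\sqrt{1-e^{2k^2t}}\Big)\leq C e^{2k^2 t}
\]
for $t<T$, uniformly in $i$. The initial data are exponentially close to the flat disk, so $L_i(T_i)$ is $2\pi/k$ up to an error of order $\eps(\tau_i)$, and this error tends to zero.

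\textbf{Step 2: integrated curvature.} From $L_i'(t)=-\tfrac12\int_{\partial\bD}R_i\,ds=-\tfrac12 R_i(\hpi,t)L_i(t)$, integrating from $T_i$ to $t$ gives
\[
\int_{T_i}^{t}R_i(\hpi,s)L_i(s)\,ds = 2\big(L_i(T_i)-L_i(t)\big)\leq Ce^{2k^2t}+o_i(1).
\]
Since $L_i\geq \pi/k$ for $t<T$, this yields $\int_{T_i}^t R_i(\hpi,s)\,ds\leq C e^{2k^2t}+o_i(1)$. The $o_i(1)$ term comes only from the initial slice and can be absorbed, or it disappears once we pass to the ancient limit.

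\textbf{Step 3: from integral to pointwise (main obstacle).} We need a monotonicity or Harnack-type statement for $R_i(\hpi,\cdot)$. The first thing to try is proving $\partial_tR_i\geq 0$ via the maximum principle. Differentiate $(\partial_t-\Delta)R=R^2$ and the Robin condition $\partial_{\nu_0}R=k\sqrt{u}\,R$ in time. This gives a linear parabolic equation for $R_t$ with boundary condition $\partial_{\nu_0}R_t=k\sqrt u\,R_t+\tfrac{k}{2}u^{-1/2}u_tR$. Since $u_t<0$, the extra term has the unfavourable sign, so I would instead work with $R_t+cR^2$ or $R_t/R$. The sign at the initial slice would be checked explicitly from the formula for $R(U(\cdot,\tau))$, as $\partial_\tau R(U)>0$ there.

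If this works, then $R_i(\hpi,t-1)\leq\int_{t-1}^{t}R_i(\hpi,s)\,ds\leq Ce^{2k^2t}$, which is the claim after relabelling $C$. If monotonicity fails, the fallback is an explicit barrier. I would look for a supersolution $S(\psi,t)=Ce^{2k^2t}\varphi(\psi)$ of $(\partial_t-u^{-1}\Delta_0)S\geq S^2$ with $\partial_{\nu_0}S\geq k\sqrt u\,S$ on the boundary. Here $\varphi$ is built from the spherical-cap profiles in Lemma \ref{scalandconformal}, which already have curvature $2/u(\hpi)-2k^2\sim e^{2k^2t}$, and the pinching \eqref{pinching} controls $u$. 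I would then compare $R_i\leq S$ from the initial time $T_i$, using Step 2 only to fix the constant $C$.
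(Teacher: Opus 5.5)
\textbf{There is a genuine gap in Step 3.} Your Steps 1 and 2 are correct. The $o_i(1)$ term is not even needed, since $L_i(T_i)\le 2\pi/k$ by Gauss--Bonnet and $R\ge 0$. But these steps only give integrated control, $\int_{T_i}^t R_i(\hpi,s)\,ds\le Ce^{2k^2t}$. The whole content of the lemma is passing to a pointwise bound at the boundary, and Step 3 leaves this conditional on a claim you do not prove.

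\emph{The monotonicity route.} Time monotonicity $\partial_t R\ge 0$ is a Harnack-type inequality, and the maximum principle does not give it here. The boundary problem you noticed persists for $R_t/R$, since $\partial_\nu(R_t/R)=-\tfrac{k}{2}R<0$ on $\partial\bD$. In addition, already in the interior $(\partial_t-\Delta)R_t=3RR_t-R^3$, so a zero minimum of $R_t$ is not excluded. For $R_t+cR^2$ with $c>0$ the reaction term becomes $3R(R_t+cR^2)-(c+1)R^3-2c\norm{\nabla R}^2$, which is even less favourable. On closed surfaces this is Hamilton's Harnack estimate, proved via $\Delta\log R+R$. Nothing in your proposal supplies a version compatible with the Robin condition $\partial_\nu R=kR$.

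\emph{The fallback barrier.} This cannot work at all. For $S=Ce^{2k^2t}\varphi$, the conditions $(\partial_t-\Delta)S\ge S^2$ and $\partial_\nu S\ge kS$ require $\varphi>0$ with $\Delta_{g(t)}\varphi\le 2k^2\varphi$ and $\partial_\nu\varphi\ge k\varphi$. Let $f>0$ be the principal eigenfunction of $\Delta_{g(t)}f=\lambda f$, $\partial_\nu f=kf$. Pairing with $f$ and using Green's identity gives $\lambda\int f\varphi\le\int f\Delta\varphi\le 2k^2\int f\varphi$, so $\lambda\le 2k^2$. But $\lambda\to k^2\mu_0$ as $t\to-\infty$, and $\mu_0>2$: the left side of \eqref{mu_0} at $\mu=2$ is about $0.71<1$, and in fact $\mu_0\approx 2.59$. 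Any supersolution must grow forward in time at rate roughly at least $k^2\mu_0$. So no barrier $Ce^{2k^2t}\varphi(\psi)$ exists for very negative $t$.

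\textbf{What the paper does instead.} The paper avoids any time Harnack and proves a \emph{spatial} log-gradient bound $\norm{\nabla R}\le CR$ for $t\le T$. It applies the maximum principle to $\phi_\eps=\norm{\nabla R}-2k\sqrt{\mu_0}R+2S-\eps e^{t-T}$, where $S=\sin\psi\,\frac{h_\psi}{1-h}-h$. This function is negative on $\partial\bD$ because $\norm{\nabla R}=kR$ there. Integrating the gradient bound from pole to boundary gives $R(\hpi,t)\le CR(0,t)$. The pole value is then controlled pointwise by Lemma \ref{scalandconformal}: $R(0,t)\le \frac{2}{u(\hpi,t)}-2k^2\le Ce^{2k^2t}$.

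So what you are missing is a mechanism that turns averaged information into a pointwise bound at the boundary. The paper uses spatial comparability together with the pointwise pole bound from the cap comparison.

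\textbf{A possible alternative fix.} If you want to keep Steps 1--2, note that Gauss--Bonnet already gives, at each time, $\int_{\bD}R\,dA=4\pi-2kL(t)\le 4\pi e^{2k^2t}$. A parabolic $L^1\to L^\infty$ (Moser-type) estimate for the nonnegative subsolution $R$ of $(\partial_t-\Delta)R\le(\sup R)R$ with $\partial_\nu R=kR$ would then close the argument. The positive boundary term would be absorbed by a trace inequality, and the constants are uniform once $g(t)$ is uniformly close to flat and $R\le 1$. That estimate, however, has to be supplied; it is not in your proposal.
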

\begin{proof} 
    Under the Ricci flow, the function $|\nabla R|$ evolves by the equation
    \begin{align*}
        (\partial_t-\Delta)|\nabla R| &= 2R\norm{\nabla R} - \frac{\norm{\nabla^2R}^2 - \norm{\nabla\norm{\nabla R}}^2}{|\nabla R|}\\
        &\stackrel{\text{Kato}}{\leq} 2R\norm{\nabla R}.
    \end{align*}
    Define the function
    \begin{equation}\label{Stestf}
        S(\psi,t) := \sin\psi\cdot \frac{h_\psi}{1-h} - h \geq -h .
    \end{equation}
    Writing in terms of $u(\psi,t)$ and $u_\fl$,
    $$S(\psi,t) = \frac{1}{2}\sin\psi\parent{\partial_\psi\log u_\fl - \partial_\psi \log u(t)} -  \parent{1 - \sqrt{\frac{u(t)}{u_\fl}}}.$$
    As $u_\fl$ and $u(t)$ satisfy the same boundary condition,
    \begin{align*}
    S\parent{\hpi,t} &= \parent{\frac{1}{2}\parent{\frac{u_{\fl,\psi}}{u_\fl} - \frac{u_\psi}{u}} - h}\parent{\hpi,t} 
    \\
    &= \parent{k\sqrt{u_\fl(\pi/2)}- 1}h\parent{\hpi,t} 
    \\
    &= 0.
    \end{align*}
    Via direct computation, we find that $S$ satisfies the evolution equation
    $$(\partial_t - \Delta)S = R(S-1) + Rh - \frac{\norm{\nabla h}^2}{1-h}.$$
   For
    $$\phi_\eps := \norm{\nabla R} - 2k\sqrt{\mu_0}R + 2S - \eps e^{t-T},$$
    we have
    \begin{align*}
        (\partial_t-\Delta)\phi_\eps &\leq 2R\norm{\nabla R} - 2k\sqrt{\mu_0}R^2 + 2RS - 2R - \frac{2\norm{\nabla h}^2}{1-h} - \eps e^{t-T}\\
        &\leq 2R\norm{\nabla R} - 2k\sqrt{\mu_0}R^2 + 2RS - 2R -   \eps e^{t-T}
    \end{align*}
    where $\eps>0$ is a small constant.
    By our construction of the ancient solution, each old solution $u_i$ satisfies $\phi_\eps <0$ at the initial timeslice: for the metric
    \begin{align*}
        G(t) := U(\psi,t)(d\psi^2+ \sin^2\psi \;d\theta^2) =  \frac{d\psi^2+ \sin^2\psi \;d\theta^2}{w_\infty^2(1+e^{k^2\mu_0 t}f_0(\psi))^2}
    \end{align*}
    which we choose our initial data for old solutions from, we have, by {Lemma \ref{f_0'(r)}},
    \begin{align*}
        \lim_{t \to -\infty} \frac{|\nabla R|}{R}(G(t)) = k(1 + \cos \psi)\frac{\partial_\psi f_0}{f_0} \leq k\sqrt{\mu_0}.
    \end{align*}
    
   Let $u = u_i$ denote any old solution which survives for a sufficiently long time. By the boundary condition $\norm{\nabla R} = kR$ plus the fact that $u(\psi,t) \leq 1/k^2$, the function $\phi_\eps = \phi_\eps(u_i)$ is always negative on the boundary. So any nonnegative parabolic maximum must occur within the interior.

    Hence at the point $(\psi_0,t)$ where $\phi$ achieves parabolic maximum
    $$\sup_{\tau\leq t,\psi\in [0,\hpi]} \phi(\psi,\tau) = \phi(\psi_0,t) =0,$$
    we have
    \begin{align*}
        0\leq (\partial_t-\Delta)\phi_\eps
        &\leq 2R\norm{\nabla R} - 2k\sqrt{\mu_0}R^2 + 2RS - 2R - \eps e^{t-T}\\
        &= 2R(k\sqrt{\mu_0}R -S+\eps e^{t-T}-1) - \eps e^{t-T}\\
        &\leq 2R(k\sqrt{\mu_0}R +h  -1) + (2R-1)\eps e^{t-T} <0 
    \end{align*}
    for sufficiently small $t\leq T$, leading to a contradiction. Due to the uniform curvature estimate {Lemma \ref{C0curvbound}}, this $T$ can be chosen independent of the index $i$, given $i \geq i_0$ sufficiently large. Taking a limit $i\ra \infty$ and $\eps \to 0$, we find that, for our old solution $u_\infty(t)$ we have
    $$|\nabla R_\infty| \leq 2k\sqrt{\mu_0}R -2S \leq (2k\sqrt{\mu_0} + 2k^2\mu_0)R \text{ for all } t\leq T$$
    where the last inequality follows from the previous lemma $R_\infty/h_\infty \geq 2k^2\mu_0$ and \eqref{Stestf}. Integrating $\norm{\nabla R_\infty}/R$ from $\psi=0$ to $\hpi$, we have
    $$\frac{R(\hpi,t)}{R(0,t)} \leq C(k,\mu_0,T)$$
    for $t\leq T$. The bounds in {Lemma \ref{scalandconformal}} then yield
    $$R(\psi,t) \leq R\left(\hpi,t\right) \leq C(k,\mu_0,T)R(0,t) \leq C'(k,\mu_0,T)e^{2k^2 t}.$$
\end{proof}

\begin{lemma}
    There exists $a,C>0$ and $T<0$ such that, for all $t<T$, we have
    $$\bigg[\log\bigg(\frac{1}{\sqrt{u(\psi,t)}} - \frac{1}{\sqrt{u_\fl(\psi)}}\bigg) - \lda_0^2 t\bigg]_t \leq Ce^{k^2t} \in L^1(-\infty,T].$$
\end{lemma}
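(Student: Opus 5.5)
We need to bound $\partial_t(\log v - \lda_0^2 t)$ from above by an integrable function. By \eqref{R/h} this derivative equals $\frac{R}{2h}-\lda_0^2$, where $h = 1-\sqrt{u/u_\fl}$ as in \eqref{eq_h}. So it suffices to show $\frac{R}{h}\le 2\lda_0^2 + Ce^{k^2 t}$ for $t<T$.

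The plan is a maximum principle for $R/h$ from above, complementary to the lower bound of the previous lemma. Its evolution equation was computed there:
\[
(\partial_t-\Delta)\frac{R}{h} = \frac{2}{h}\Big\langle\nabla h,\nabla\frac{R}{h}\Big\rangle + h\Big(\frac{R}{h}\Big)^2 - \frac{R|\nabla h|^2}{h^2(1-h)} \le \frac{2}{h}\Big\langle\nabla h,\nabla\frac{R}{h}\Big\rangle + h\Big(\frac{R}{h}\Big)^2 .
\]
By \eqref{heightBC} it satisfies a homogeneous Neumann condition, so a spatial maximum at the boundary still gives vanishing gradient and nonpositive Laplacian. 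Hence $M(t) := \max_\psi R/h$ satisfies $M'(t) \le h_{\max}(t) M(t)^2$ in the sense of forward difference quotients.

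Next I would bound $h_{\max}(t)$. Since $h = 1-\sqrt{u\,k^2}(1+\cos\psi)$ and $h_\psi\ge0$, we get $h_{\max} = h(\hpi,t) = 1-k\sqrt{u(\hpi,t)}$. By Lemma \ref{scalandconformal}, $1-k^2u(\hpi,t)\le e^{2k^2t}$, so $h_{\max}\le 1-\sqrt{1-e^{2k^2t}}\le e^{2k^2t}$. (Alternatively, integrate the curvature bound $R\le Ce^{2k^2t}$ of Lemma \ref{R uperbound estimates} along $u_t=-Ru$.)

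Then integrate the differential inequality for $M$. Writing it as $(1/M)'\ge -h_{\max}$, we get for $s<t$
\[
\frac{1}{M(t)} \ge \frac{1}{M(s)} - \int_s^t e^{2k^2\tau}\,d\tau .
\]
Letting $s\to-\infty$ requires knowing $\limsup_{s\to-\infty}M(s)\le 2\lda_0^2$. This is the main obstacle. For the constructed solution I would get it from the old solutions. The computation $R/h$ of $G(t)$ in the previous lemma shows $\max R_i/h_i$ at the initial time $T_i$ is at most $2\lda_0^2 + C\eps(\tau_i)$. Running the same ODE comparison on $u_i$ from $T_i$, with $h_{\max}$ controlled by Lemma \ref{scalandconformal}, which holds for old solutions, gives
\[
\frac{1}{M_i(t)} \ge \frac{1}{2\lda_0^2 + C\eps(\tau_i)} - \frac{e^{2k^2t}}{2k^2}.
\]
This bound is uniform in $i$, so passing to the limit $i\to\infty$ yields $M(t)\le 2\lda_0^2 + Ce^{2k^2t}$ for $t<T$, where $T$ is chosen so the right side of the reciprocal bound stays positive. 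This gives the claim with any rate up to $e^{2k^2 t}$, in particular $Ce^{k^2t}$. Some care is needed near $\psi=0$, where $h$ is smallest but still positive by the $h_\psi\ge0$ computation; since $R/h$ is smooth on $\old$ for each fixed time, the maximum principle applies there without difficulty.
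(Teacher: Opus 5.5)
Your proposal is correct, and it reaches the bound by a different route from the paper.

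\textbf{What is common.} Both arguments use the same evolution equation for $R/h$ and the Neumann condition from \eqref{heightBC}. Both run the maximum principle on the old solutions $u_i$ starting from $T_i$, then pass to the limit $i\to\infty$.

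\textbf{How the paper handles the reaction term.} The paper treats $h(R/h)^2$ as $R\cdot(R/h)$ and invokes the curvature decay $R\le Ce^{2k^2t}$ of Lemma \ref{R uperbound estimates}. This gives the linear inequality $M'\le Ce^{2k^2t}M$ and hence the multiplicative bound $\max R_i/h_i\le (1+Ce^{2k^2t})\max R_i(T_i)/h_i(T_i)$.

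\textbf{How you handle it.} You keep the factor $h$ and bound it using $h_\psi\ge 0$: this gives $h\le h(\hpi,t)=1-k\sqrt{u(\hpi,t)}$. The spherical-cap estimate $1-k^2u(\hpi,t)\le e^{2k^2t}$ of Lemma \ref{scalandconformal} then yields $h\le e^{2k^2t}$. The result is a Riccati inequality $M'\le e^{2k^2t}M^2$, which you integrate explicitly.

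\textbf{What your route gains.} It uses only the elementary barrier estimate, and that estimate holds verbatim for every old solution $u_i$. The uniformity in $i$ needed before taking the limit is therefore immediate. By contrast, Lemma \ref{R uperbound estimates} is concluded for the limit flow and then applied back to the $u_i$ on $[T_i,T)$. Your route also makes the delicate test-function argument with $S$ unnecessary for this lemma, which is the only place the paper uses it.

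\textbf{What it costs.} You must choose $T$ negative enough that $\frac{1}{2\lda_0^2+C\eps(\tau_i)}-\frac{e^{2k^2t}}{2k^2}$ stays positive uniformly in $i$, so the Riccati comparison cannot blow up. You address this.

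You also state explicitly the initial upper bound $\max R_i/h_i(T_i)\le 2\lda_0^2+C\eps(\tau_i)$, which follows from the formula for $R/h$ of $G(t)$ using $f_0\ge1$ and bounded $f_0'$. The paper leaves this step implicit.
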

\begin{proof}
    Recall the evolution equation
    \begin{align*}
    (\partial_t - \Delta)(Rh^{-1}) &= \frac{2}{h}\left\langle \nabla h,\nabla \parent{\frac{R}{h}}\right\rangle + \parent{h\parent{\frac{R}{h}}^2 - \frac{R}{h^2(1-h)}\norm{\nabla h}^2}\\
    &\leq 2\left\langle \frac{\nabla h}{h},\nabla \parent{\frac{R}{h}}\right\rangle + R\cdot \frac{R}{h}\\
    &\leq 2\left\langle \frac{\nabla h}{h},\nabla \parent{\frac{R}{h}}\right\rangle + Ce^{2k^2 t}\cdot \frac{R}{h}
\end{align*}
where the last inequality follows from the previous lemma, for sufficiently small $T_i\leq t<T$. 
Then for any old solution $u_i$, we have
$$\max_{t,\psi} \frac{R_i}{h_i} \leq \max_{\psi} (Ce^{2k^2t}+1)\frac{R_i(T_i)}{h_i(T_i)}$$
due to the boundary condition
$$\partial_\nu \frac{R}{h} \equiv 0$$
and an ODE comparison.
Then using the bound $R_i/h_i \geq 2\lda_i^2$, we obtain the inequality for the constructed ancient solution
$$ \frac{R_\infty}{h_\infty} \leq 2k^2\mu_0 + C(k,\mu_0)e^{2k^2t} \text{ for }t\leq T $$
by sending $2\lda_i^2 \ra 2k^2\mu_0$ ($i\to \infty$). Recalling the identity (\ref{R/h}), we have
\begin{align*}
    \bigg[\log\bigg(\frac{1}{\sqrt{u(\psi,t)}} - \frac{1}{\sqrt{u_\fl(\psi)}}\bigg) - \lda_0^2 t\bigg]_t &= \frac{R_\infty}{2h_\infty} - k^2\mu_0\\
    &\leq Ce^{2k^2 t}.
\end{align*}
\end{proof}
Following from the discussion at the beginning of this section, we obtain:
\begin{proposition}[Sharp asymptotics]
    For our constructed ancient solution to \eqref{2DRF} $u_\infty(\psi,t)$, the limit function
    $$A_\infty(\psi) := \lim_{t\to -\infty} \parent{\frac{1}{\sqrt{u_\infty(\psi,t)}} - \frac{1}{\sqrt{u_\fl(\psi,t)}}}e^{-k^2\mu_0t} \in (0,\infty)$$
    exists.
\end{proposition}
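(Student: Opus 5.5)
We will derive the statement by integrating the logarithmic time derivative from the two preceding lemmas. Fix $\psi\in[0,\hpi]$ and set
\[
F(\psi,t) := \log v(\psi,t) - k^2\mu_0 t,\qquad v(\psi,t)=\frac{1}{\sqrt{u_\infty(\psi,t)}}-\frac{1}{\sqrt{u_\fl(\psi)}}.
\]
First we note $v>0$ for all $t<0$. This follows from $h>0$ in \eqref{eq_h}, which for the constructed solution is inherited from the old solutions, since $u$ decreases in time by \eqref{2Du} and starts below $u_\fl$. Hence $F$ is well defined and smooth in $t$. By \eqref{R/h} we have $F_t = \frac{R}{2h}-k^2\mu_0$ with $\lda_0^2=k^2\mu_0$. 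The first lemma of this subsection gives $F_t>0$, more precisely $R/h\geq 2\lda_0^2$. So $t\mapsto F(\psi,t)$ is nondecreasing, and $\lim_{t\to-\infty}F(\psi,t)=:F_{-\infty}(\psi)\in[-\infty,\infty)$ exists. Equivalently, $v e^{-\lda_0^2 t}$ is monotone, and its limit $A_\infty(\psi)$ exists in $[0,\infty)$. It is finite because $F(\psi,t)\le F(\psi,T)<\infty$ for $t\le T$.

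Next we rule out $A_\infty(\psi)=0$, i.e. $F_{-\infty}=-\infty$. We use the last lemma, which bounds $F_t\le Ce^{2k^2t}$ (in particular $\le Ce^{k^2 t}$) for $t<T$, with $C$ independent of $\psi$. For $t<T$,
\[
F(\psi,T)-F(\psi,t)=\int_t^T F_t(\psi,s)\,ds \le \int_{-\infty}^T Ce^{k^2 s}\,ds = \frac{C}{k^2}e^{k^2T}.
\]
Therefore $F(\psi,t)\ge F(\psi,T)-Ck^{-2}e^{k^2T}>-\infty$ uniformly in $t<T$. This gives
\[
A_\infty(\psi)\ge v(\psi,T)e^{-\lda_0^2T}\exp\!\big(-Ck^{-2}e^{k^2T}\big)>0,
\]
using $v(\psi,T)>0$. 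Combined with the upper bound $A_\infty(\psi)\le v(\psi,T)e^{-\lda_0^2T}$, we get $A_\infty(\psi)\in(0,\infty)$ pointwise. Both bounds are uniform over $\psi$ by continuity of $v(\cdot,T)$ on the compact set $[0,\hpi]$.

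The main subtle point is the positivity of $v$ at the pole and near the boundary, and the uniformity of the constants in $\psi$. The bounds on $R/h$ come from maximum principle arguments valid on all of $[0,\hpi]$: $\partial_\nu(R/h)=0$ by \eqref{heightBC}, so there is no boundary degeneration. If positivity of $v(\cdot,T)$ needs justification, we would argue as follows. We have $h\ge0$, and $h$ satisfies \eqref{h evolution} with Robin condition $h_{\nu}=kh$. The strong maximum principle then gives $h>0$ unless $h\equiv0$, and $h\equiv 0$ is the static flat disk, excluded since $R>0$. Finally, the uniform convergence in $\psi$ would follow from the same integrated estimate, since $|F(\psi,t)-F_{-\infty}(\psi)|\le Ck^{-2}e^{k^2t}$ uniformly. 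This also shows that $A_\infty$ is continuous, which is useful for identifying it later with a multiple of $f_0$ via Proposition \ref{rulingoutnull}.
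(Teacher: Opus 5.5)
Your argument is correct and follows the paper's route: the monotonicity lemma ($R/h\ge 2\lda_0^2$, i.e.\ $F_t\ge 0$) gives existence of the limit in $[0,\infty)$, and integrating the bound $F_t\le Ce^{k^2t}$ from the last lemma over $(-\infty,T]$ rules out $A_\infty=0$. Your extra checks (positivity of $v$ because $u$ strictly decreases in time toward $u_\fl$, and uniformity in $\psi$ by compactness) only fill in details the paper leaves to ``the discussion at the beginning of this section.''
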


\subsection{Uniqueness} Now let $({g}(t) := {u}(\psi,t)g_0)_{t<0}$ denote \textit{any} rotationally symmetric {Ricci flow with boundary}, with a flat disk backward limit $(\old,g_{\fl})$. Recalling its properties in Section \ref{subsec_anyancient}, we first prove that ${g}(t)$ must satisfy the same asymptotics as our constructed solution $g_\infty(t)$ near $t=-\infty$.
\begin{proposition}\label{anyasymp}
Define $A := A_\infty(0) >0$. We have
    $$\lim_{t\to -\infty} e^{-k^2\mu_0 t}\parent{\frac{1}{\sqrt{{u}(\psi,t)}} - \frac{1}{\sqrt{u_\fl(\psi)}}} = Af_0(\psi).$$
\end{proposition}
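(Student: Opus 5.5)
The plan is to compare $v(\psi,t):=u^{-1/2}-u_\fl^{-1/2}$ with the linearized flow $\Phi_t=w_\infty^3\Delta_0(\Phi/w_\infty)$ of Section~\ref{robinbcspectrum}, and to show that the unstable radial mode $f_0$ dominates as $t\to-\infty$. The constant $A$ is then matched by a time translation, which is allowed since uniqueness is only claimed modulo time shifts.

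\textbf{Step 1: sign and decay of $v$.} Since $u_t=-Ru<0$ and $u(\cdot,t)\to u_\fl$ backward, we have $u<u_\fl$, so $v>0$ for all $t$. Passing to the flat coordinates $r=\tan(\psi/2)$, Lemma~\ref{lem_decay_estimate} applies to the normalized profile $f:=v/w_\infty$ and gives $\|f(\cdot,t)\|_{C^2}\le ae^{bt}$ for some $b>0$.

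\textbf{Step 2: spectral decomposition and the alternative.} In these coordinates $f$ solves
\[
f_t=k^2\Delta_{\bR^2}f+N(f),\qquad \partial_rf=f \text{ on }\partial\bD,
\]
with a quadratic error satisfying $\|N(f)\|_{L^2}\le C\|f\|_{C^2}^2$. Because $f$ is radial, it is $L^2$-orthogonal to the null eigenspace $\mathrm{span}\{x_1,x_2\}$ of Proposition~\ref{rulingoutnull}. So $f=\alpha(t)f_0+f_-$, where $f_-$ lies in the span of the radial modes with $\mu_i<0$.

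For the stable part, Duhamel's formula integrated from $-\infty$ (legitimate since $f\to0$) gives
\[
\|f_-(t)\|\le\int_{-\infty}^t e^{k^2\mu_-(t-s)}\,C\|f(s)\|^2\,ds=O(e^{2bt}).
\]
For the unstable part, $\alpha'=k^2\mu_0\alpha+O(e^{2bt})$. Integrating, $\alpha(t)=c\,e^{k^2\mu_0t}+O(e^{2bt})$ if $2b>k^2\mu_0$; otherwise $\alpha(t)=O(e^{2bt})$, the constant $c$ being forced to vanish because $\alpha\to0$ backward.

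Bootstrapping with parabolic Schauder estimates to upgrade $L^2$ to $C^2$, we double the exponent $b$ at each step. Either we eventually reach $f=c\,e^{k^2\mu_0t}f_0+O(e^{(k^2\mu_0+\delta)t})$ in $C^0$ with $c\ne0$, or $f=O(e^{Mt})$ for every $M$.

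\textbf{Step 3: the main obstacle, ruling out infinite-order decay.} I will first argue by backward uniqueness. Writing the equation for $f$ as a linear equation $f_t=a\Delta f+b\cdot\nabla f+cf$ with Robin boundary condition and bounded coefficients, a log-convexity (Agmon--Nirenberg type) argument for the frequency $\int|\nabla f|^2/\int f^2$ should show that a nontrivial solution cannot vanish faster than every exponential backward. This is where I expect the most technical work, because of the boundary terms coming from the Robin condition.

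Alternatively I would use Proposition~\ref{rulingoutnull} directly. Fix $s_j\to-\infty$ and set $f^j(\cdot,t):=f(\cdot,s_j+t)/\|f(\cdot,s_j)\|_{L^\infty}$. By the Harnack inequality and positivity, $f^j$ subconverges to a nonnegative, nontrivial ancient solution of the linear equation with at most exponential growth; Proposition~\ref{rulingoutnull} forces it to be $Be^{k^2\mu_0 t}f_0$. With the spectral gap this contradicts super-exponential decay. The delicate point in this route is the $e^{o(t)}$ hypothesis, which I would obtain from a monotonicity of $\log\|f\|$ derived from the energy $\mathscr{E}$.

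\textbf{Step 4: conclusion.} Since $v>0$ and $f_0>0$, we get $c>0$. Replacing $u(\cdot,t)$ by $u(\cdot,t+t_0)$ multiplies $c$ by $e^{k^2\mu_0t_0}$, so we may choose $t_0$ so that the limit at $\psi=0$ equals $A=A_\infty(0)$; the same normalization as $A_\infty$ then gives the limit $Af_0(\psi)$. The errors $O(e^{(k^2\mu_0+\delta)t})$ from Step 2 vanish after multiplication by $e^{-k^2\mu_0t}$, so the convergence is uniform in $\psi$.
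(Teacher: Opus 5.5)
Your proposal is correct in outline, but it takes a genuinely different route from the paper.

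The paper does not linearize from a decay estimate. Instead it uses the constructed solution $u_\infty$ as a barrier. Since $u$ and $u_\infty$ both become extinct at $t=0$, the comparison principle (Lemma~\ref{maxtime}) forces them to cross at every time. Together with $h_\psi\ge 0$ and Lemma~\ref{anysolnbound}, this controls $v$ by $v_\infty$, i.e.\ at the rate $e^{k^2\mu_0 t}$. The rescalings $e^{-k^2\mu_0\tau}v(\cdot,\cdot+\tau)$ then subconverge to a nonnegative solution of the linearized Robin problem, which Proposition~\ref{rulingoutnull} identifies. The common extinction time is what ties the constant to $A_\infty(0)$.

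Your argument is intrinsic. It uses neither $u_\infty$ nor comparison. It uses only Lemma~\ref{lem_decay_estimate} (via \L{}ojasiewicz) to start a Duhamel bootstrap on the radial splitting $f=\alpha f_0+f_-$, and it removes the null modes by symmetry rather than by Proposition~\ref{rulingoutnull}. What it buys is an explicit remainder $O(e^{(k^2\mu_0+\delta)t})$ and the fact that every radial solution lies on the one-dimensional unstable manifold. The price is that the coefficient $c>0$ is determined only up to time translation.

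\textbf{Step 3 is easier than you expect.} No log-convexity or Harnack compactness is needed. Since $\partial_r f=f$,
\[
\frac12\frac{d}{dt}\int_{\bD}f^2=-k^2\Big(\int_{\bD}|\nabla f|^2-\int_{\partial\bD}f^2\Big)+\int_{\bD}f\,N(f),
\]
and integrating the second-order part of $N(f)$ by parts gives $\big|\int_{\bD}fN(f)\big|\le C\|f\|_{C^0}\big(\int_{\bD}|\nabla f|^2+\int_{\partial\bD}f^2\big)$. Combine this with the Rayleigh bound $\int_{\bD}|\nabla f|^2-\int_{\partial\bD}f^2\ge-\mu_0\int_{\bD}f^2$ and the trace inequality. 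You get $\frac{d}{dt}\log\|f\|_{L^2}\le k^2\mu_0+o(1)$. Hence for every $\delta>0$, $\|f(t)\|_{L^2}\ge c_\delta\,e^{(k^2\mu_0+\delta)t}$ as $t\to-\infty$, which excludes infinite-order vanishing immediately.

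In your second route the $e^{o(t)}$ hypothesis is also unnecessary. A nonnegative ancient solution of the linear problem with exponential backward growth has only finitely many modes. The backward-dominant one must be $f_0$, since every other eigenfunction is orthogonal to $f_0>0$.

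\textbf{The time shift in Step 4 changes the solution.} In the paper, $u$ is normalized to become extinct at $t=0$. To get the constant for $u$ itself, argue as the paper implicitly does. Suppose $c$ differed from the corresponding coefficient of $u_\infty$. Since $w_\infty f_0$ is bounded below, $u$ and $u_\infty$ would then be strictly ordered at some very negative time. Comparing with a slight time translate of $u_\infty$ would then give different extinction times, a contradiction. For the uniqueness theorem, which is modulo time shifts, your normalization is enough.
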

\begin{proof}
    Denote $$v(\psi,t) := \frac{1}{\sqrt{{u}(\psi,t)}} - \frac{1}{\sqrt{u_\fl(\psi)}}$$
    and its rescaled version
    $$v^\tau(\psi,t) := e^{-k^2\mu_0\tau}v(t+\tau) =   e^{-k^2\mu_0\tau}\parent{\frac{1}{\sqrt{{u}(\psi,t)}} - \frac{1}{\sqrt{u_\fl(\psi)}}}$$
    where $\tau<0$. $v^\tau$ is defined for $t\in (-\infty,-\tau)$. $v^\tau$ satisfies the equation
\begin{align}
    \partial_t \log v^\tau = \frac{v^\tau_t}{v^\tau} &= u(t+\tau)^{-3/2}\cdot \frac{\Delta_0\log (v(t+\tau) + w_\infty) - \Delta_0 \log w_\infty}{v(t+\tau)}\label{vtau}\\
    &= u(t+\tau)^{-3/2}\cdot \frac{\Delta_0\log (e^{k\mu_0^2\tau}v^\tau(t) + w_\infty) - \Delta_0 \log w_\infty}{e^{k\mu_0^2\tau}v^\tau(t)}\notag,
\end{align}
with the boundary condition
$$\partial_\nu v^\tau = 0.$$
Since $u(t)$ must vanish at $t=0$, by the comparison principle Lemma \ref{maxtime}, $u(t)$ must intersect $u_\infty(t)$ at all times. Especially, the lower bound of one cannot exceed the upper bound of another. The same holds for the functions $h(t)$ and $h_\infty(t)$. Also note that the nonnegativity of the gradients $u_\psi$ and $R_\psi$ gives $h_\psi\geq 0$ just as in the case of the constructed (old) solutions. Then {Lemma \ref{anysolnbound}} implies
$$\limsup_{t\to-\infty}e^{-k^2t}\max_{\psi}{h}(\psi,t) \leq C <\infty.$$
Then we can use Alaoglu's theorem to take a uniform limit on \eqref{vtau} by choosing a sequence $\tau_j \to -\infty$. The resulting weak limit function $\Phi := \lim_{\tau_j\to-\infty} v^{\tau_j}$ must solve the linearized equation of \eqref{vtau}:
\begin{equation}
\begin{cases}
   \Phi_t =  w_\infty^3\Delta_0 \parent{\frac{\Phi}{w_\infty}},\\
    \partial_{\nu_0}\Phi = 0.
\end{cases}
\end{equation}
From {Proposition \ref{rulingoutnull}}, we conclude that
$$\Phi(\psi,t) = Ae^{k^2\mu_0t}f_0(\psi)$$
where $A := A_\infty(0).$
\end{proof}

Now we are ready to prove the uniqueness of ancient solution to \eqref{2DRF} with a flat disk backward limit.
\begin{proposition}[Uniqueness]\label{uniqueness of 1.1}
    Modulo a translation in time and a time-independent diffeomorphism, there is only one positively curved ancient solution to the {Ricci flow with boundary} equation \eqref{2DRF} satisfying conditions of constant boundary geodesic curvature $k>0$ and uniformly bounded diameter.
\end{proposition}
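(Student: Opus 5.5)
Let $g(t)$ be any positively curved ancient solution of \eqref{2DRF} with uniformly bounded diameter. We first reduce to the rotationally symmetric setting and then compare $g(t)$ with the constructed solution $g_\infty(t)$ of Proposition \ref{existence}. By Proposition \ref{flatdisklimit}, $g(t)$ converges smoothly as $t\to-\infty$ to the flat disk of radius $1/k$ after a time-independent diffeomorphism. Proposition \ref{prop_g_symmetry} then provides a further time-independent diffeomorphism making $g(t)$ rotationally symmetric for all $t$. We may therefore write $g(t)=u(\psi,t)g_0$ with $u(\cdot,t)\to u_{\fl}$. For such $u$, Lemma \ref{anysolngrad} and Lemma \ref{anysolnbound} apply, so Proposition \ref{anyasymp} gives
\[
v(\psi,t):=\frac{1}{\sqrt{u}}-\frac{1}{\sqrt{u_{\fl}}}=A e^{k^2\mu_0 t}f_0(\psi)+o(e^{k^2\mu_0 t}),
\]
and the same holds for $u_\infty$ with some constant $A_\infty>0$.

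Next we use the time-translation freedom to match the leading coefficients. Replacing $u(\cdot,t)$ by $u(\cdot,t+t_0)$ multiplies the leading coefficient by $e^{k^2\mu_0 t_0}$. Provided $A>0$, we can choose $t_0$ with $A e^{k^2\mu_0t_0}=A_\infty$. We must separately rule out $A=0$: since $h>0$, we have $v>0$, and the monotonicity of $(\log v-\lda_0^2t)_t$ together with its $L^1$ upper bound should transfer to any solution with $v>0$. If this transfer fails, we instead argue that $A=0$ forces a faster decay rate, which is incompatible with Proposition \ref{rulingoutnull} because no eigenvalue lies strictly between $\mu_0$ and $0$ with a nonnegative eigenfunction.

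The main step, and the expected obstacle, is to show that $u\equiv u_\infty$ once the leading asymptotics agree. We would argue by comparison using Lemma \ref{maxtime}, which is a comparison principle between solutions with the same Robin condition. Fix $\delta>0$ and consider the time shifts $u_\infty(\cdot,t\pm\delta)$. Because $v_\infty(\cdot,t\pm\delta)\sim A_\infty e^{\pm k^2\mu_0\delta}e^{k^2\mu_0t}f_0$ and $f_0$ is bounded below by a positive constant, the expansions give $v_\infty(\cdot,t-\delta)<v(\cdot,t)<v_\infty(\cdot,t+\delta)$ uniformly in $\psi$ for all $t\le T_\delta$. Equivalently, $u_\infty(t+\delta)\le u(t)\le u_\infty(t-\delta)$ at $t=T_\delta$.

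The comparison argument in Lemma \ref{maxtime} applies to $\eta=u^{-1/2}-\tilde u^{-1/2}$ for two genuine solutions, since the boundary condition $\partial_{\nu_0}\eta=0$ and the linear structure $A\Delta_0\eta+B\cdot\nabla^0\eta+C\eta$ are the same. Applying it from $T_\delta$ onward, the two-sided ordering persists for all later $t$ at which both flows exist. Letting $\delta\to0$ yields $u\equiv u_\infty$. The difficulty is that the $o(e^{k^2\mu_0t})$ error terms must be uniform in $\psi$, including at the boundary, relative to $\delta A_\infty e^{k^2\mu_0 t}\min f_0$. The convergence in Proposition \ref{anyasymp} is only obtained through a subsequential weak limit. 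We would upgrade it to uniform $C^0$ convergence using parabolic Schauder estimates for the rescaled functions $v^\tau$, which satisfy a uniformly parabolic equation with Neumann condition $\partial_\nu v^\tau=0$. These estimates give precompactness in $C^0$. Uniqueness of the limit $Af_0$ then gives full convergence.

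Finally, composing the diffeomorphisms and the time shift gives $g(t)=\Psi^*g_\infty(t+t_0)$, which is the claimed uniqueness.
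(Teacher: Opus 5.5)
Your proof is correct given the results established earlier in the paper. It uses the same ingredients as the paper: the reduction via Propositions \ref{flatdisklimit} and \ref{prop_g_symmetry}, the asymptotics of Proposition \ref{anyasymp}, and Lemma \ref{maxtime} applied to two genuine solutions. The closing argument, however, is organized differently.

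The paper removes the time-translation freedom by normalizing both flows to become extinct at $t=0$. Proposition \ref{anyasymp} then gives both the same leading coefficient $A=A_\infty(0)$. So the single shift $u(\cdot,t+\tau)$ with $\tau<0$ has coefficient $Ae^{k^2\mu_0\tau}\neq A$, which yields a strict ordering at very negative times. Lemma \ref{maxtime} propagates this ordering, and $\tau\to0$ gives a one-sided inequality between $u$ and $u_\infty$. Equality then comes from the fact that two flows with the same extinction time must intersect at every time. That step really needs a strict (strong maximum principle) form of the comparison, not just Lemma \ref{maxtime}.

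You instead remove the freedom by matching leading coefficients and trap $u(t)$ between $u_\infty(t+\delta)$ and $u_\infty(t-\delta)$, so $\delta\to0$ gives $u\equiv u_\infty$ at once. Your two-sided sandwich uses neither extinction times nor a touching argument, and it only needs the limit to exist with some positive coefficient. The paper's version needs only a one-sided ordering, but leans on the common extinction time to pin down the coefficient.

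Two small corrections.

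First, you do not need your fallback arguments for $A>0$. Proposition \ref{anyasymp} already asserts that the coefficient is $A_\infty(0)>0$ for the extinction-normalized flow. Your second fallback would also not work on its own: Proposition \ref{rulingoutnull} allows $A=0$, and it cannot exclude a subexponential correction such as $v\approx e^{k^2\mu_0t}f_0/|t|$.

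Second, Schauder compactness plus Proposition \ref{rulingoutnull} determines subsequential limits of $v^{\tau}$ only up to the constant. So ``uniqueness of the limit'' requires that constant to be independent of the subsequence. You should cite Proposition \ref{anyasymp} for the full uniform limit, since that is where the paper fixes the constant via the intersection with $u_\infty$.
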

\begin{proof}
  By  {Proposition \ref{flatdisklimit}} and Proposition \ref{prop_g_symmetry}, we only need to show the uniqueness of rotationally symmetric ancient solutions to \eqref{2DRF} with the prescribed backward limit $(\old,g_{\fl} = u_\fl g_0)$. Denote by $u_\infty(t)$ the constructed ancient solution and $u(t)$ another rotationally symmetric ancient solution to the flow equation \eqref{2DRF} which converges backward to $g_{\fl}$. Modulo a time shift, we assume that both flows vanish at time $t=0$. Denote, for $\tau<0$:
    $$v_\infty(t) := \frac{1}{\sqrt{{u_\infty}(\psi,t)}} - \frac{1}{\sqrt{u_\fl(\psi)}}, \quad  v_\tau(t):= \parent{\frac{1}{\sqrt{{u}(\psi,t+\tau)}} - \frac{1}{\sqrt{u_\fl(\psi)}}}.$$
    By {Proposition \ref{anyasymp}},
    $$e^{-k^2\mu_0 t}v_\tau(\psi,t) \to Ae^{k^2\mu_0\tau}f_0(\psi) < Af_0(\psi) \text{ as } t\to -\infty $$
    uniformly in $\psi.$ Therefore the time-shifted function $v_\tau(t)$ must stay above $v_\infty(t)$ for sufficiently small $t<0$ and so should $u(t+\tau)$ stay below $u_\infty(t)$. The comparison principle {Lemma \ref{maxtime}} yields that $u(t+\tau)$ must stay below $u_\infty(t)$ for all $t<0$. Taking $\tau \to 0$, we find that in fact $$u(t)\leq u_\infty(t) \text{ for all } t<0.$$ But as both $u$ and $u_\infty$ should vanish at time $t=0$, they must intersect for all $t<0$, yielding $u \equiv u_\infty$ for all $t$ again by {Lemma \ref{maxtime}}.
\end{proof}

\appendix
\begin{appendices}

\section{Evolution equation of \texorpdfstring{$E(\psi,t)$}{E(psi,t)}}\label{EvolE}
Below we derive the following differential inequality for the quantity $E$ defined in \eqref{eq_E}, which we used in the proof of the uniqueness of rotationally symmetric ancient solutions in Section \ref{uniquenesssection}.
\begin{lemma}\label{evolution of E}
Let $g(\psi,\theta,t) = u(\psi,t)g_{0}$, $0\leq t< T$ be a radially symmetric {Ricci flow with boundary} on $\old$ solving \eqref{2DRF}. The quantity $$E(\psi,t) := R(\psi,t) - \frac{|\nabla h|^2}{h(1-h)}$$
defined in \eqref{eq_E} satisfies
$$\partial_tE - \Delta_{g(t)} E \geq E^2.$$
Above, $R$ denotes the scalar curvature, $h(\psi,t) = 1 - \sqrt{\frac{u(\psi,t)}{u_\fl(\psi)}}$ as in \eqref{eq_h}, and $u_{\fl}$ is the conformal factor associated with the flat disk as in \eqref{eq_flat}.
\end{lemma}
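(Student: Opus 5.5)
Write $Q := \frac{|\nabla h|^2}{h(1-h)}$, so that $E = R - Q$. The plan is to compute $(\partial_t-\Delta)Q$ directly from the evolution equation \eqref{h evolution} for $h$,
\[
(\partial_t-\Delta)h = \frac{|\nabla h|^2}{1-h},
\]
and then subtract it from $(\partial_t-\Delta)R = R^2$. The goal is to show that
\[
(\partial_t-\Delta)E - E^2 = 2RQ - Q^2 - (\partial_t-\Delta)Q \geq 0,
\]
where we used $R^2 - E^2 = 2RQ - Q^2$. So it suffices to prove
\[
(\partial_t-\Delta)Q \leq 2RQ - Q^2
\]
modulo gradient terms that can be absorbed.

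To compute $(\partial_t-\Delta)|\nabla h|^2$ on a two-dimensional Ricci flow, I will use the standard Bochner identity with $\partial_t g = -Rg$:
\[
(\partial_t-\Delta)|\nabla h|^2 = 2\langle\nabla h,\nabla(\partial_t-\Delta)h\rangle - 2|\nabla^2h|^2 .
\]
Here the curvature term $+R|\nabla h|^2$ from $\partial_t g^{-1}$ cancels against the Ricci term from Bochner, since $\ric = \frac R2 g$. Substituting $(\partial_t-\Delta)h = |\nabla h|^2/(1-h)$ gives terms $\frac{2\langle\nabla h,\nabla|\nabla h|^2\rangle}{1-h} + \frac{2|\nabla h|^4}{(1-h)^2}$.

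Next, for $\phi(h) := \frac{1}{h(1-h)}$, I will write
\[
(\partial_t-\Delta)\big(\phi|\nabla h|^2\big) = \phi(\partial_t-\Delta)|\nabla h|^2 + |\nabla h|^2\big(\phi'(\partial_t-\Delta)h - \phi''|\nabla h|^2\big) - 2\phi'\langle\nabla h,\nabla|\nabla h|^2\rangle .
\]
In rotational symmetry $h = h(\psi)$, so $\nabla h$ is radial and I can use coordinates in which everything reduces to one-variable quantities. Write $X := |\nabla h|$ and let $h''$ denote the radial second covariant derivative. Then $|\nabla^2h|^2 \geq (h'')^2$, and $\langle\nabla h,\nabla|\nabla h|^2\rangle = 2X^2h''$. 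The result is a quadratic expression in $h''$ whose coefficients involve $X^2$ and $h$, and which depends on $R$ only through the target inequality.

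Finally, I will eliminate the gradient terms using $\nabla E = \nabla R - \nabla Q$, absorbing them as a term $\langle b,\nabla E\rangle$. Wherever $\nabla E$ appears in the inequality this is harmless, since the maximum principle application tolerates drift terms. The remaining task is to complete the square in $h''$:
\[
-2\phi (h'')^2 + (\text{linear in } h'')\,X^2 + (\text{quartic in } X)\,\phi\ \ \text{terms} \;\leq\; 2RQ - Q^2 .
\]
I expect this to be the main obstacle. The $R$-dependence on the right has no counterpart on the left unless I use the structure linking $R$ and $h$. Writing $1-h = \sqrt{u/u_\fl}$ and using the formula $\Delta\sqrt{u/u_\fl} = \sqrt{u/u_\fl}\big(-\frac R2+|\nabla\log\sqrt{u/u_\fl}|^2\big)$ derived earlier, we get
\[
\Delta h = (1-h)\frac R2 - \frac{X^2}{1-h}.
\]
In the radial setting $\Delta h = h'' + \kappa h'$, where $\kappa$ is the geodesic curvature of the level circles, so this expresses $R$ through $h''$ and $h'$. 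I will substitute this expression for $R$ into $2RQ$, which turns the desired inequality into a pointwise inequality in $(h'',h',h,\kappa)$. I will then complete the square in $h''$, using the sign information $h_\psi\geq0$ and $0<h<1$ to control the leftover $\kappa h'$ terms. If the pure completion of squares leaves a residual of indefinite sign, I will try adding a multiple of $\langle\nabla h,\nabla E\rangle/h$ to absorb it, since that only alters the drift.
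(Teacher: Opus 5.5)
Your plan is correct, and it takes a genuinely different route from the paper's.

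\textbf{The paper's route.} The paper never passes to coordinates. With $f=\log h$ it completes the square in the full Hessian and obtains the coordinate-free identity
\[
(\partial_t-\Delta)E-E^2=\frac{2h}{1-h}\Big|\nabla^2 f+\frac{|\nabla h|^2}{2h(1-h)}\,g\Big|^2+\frac{R\,|\nabla h|^2}{h(1-h)}.
\]
Positivity then comes directly from $R\ge 0$. It uses neither rotational symmetry nor the sign of $h_\psi$.

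\textbf{Your route.} You eliminate $R$ through $\Delta h=(1-h)\frac R2-\frac{|\nabla h|^2}{1-h}$ and work radially. Set $\phi=\frac1{h(1-h)}$, $X=h_\rho$, $g(t)=d\rho^2+\omega(\rho)^2d\theta^2$ and $\kappa=\omega_\rho/\omega$. The terms $\pm\frac{4\phi X^2h_{\rho\rho}}{1-h}$ coming from $2RQ$ and from $(\partial_t-\Delta)Q$ cancel. Completing the square in $h_{\rho\rho}$ then gives exactly
\[
(\partial_t-\Delta)E-E^2=2\phi\big(h_{\rho\rho}+(2h-1)\phi X^2\big)^2+2\phi(\kappa X)^2+\frac{4\phi\,\kappa X^3}{1-h}+\frac{(2-h)\phi^2}{1-h}X^4 .
\]
So no indefinite residual appears once you know $\kappa X\ge 0$.

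\textbf{Drop the drift device.} The fallback of absorbing terms into $\langle b,\nabla E\rangle$ is unnecessary. The only gradient terms are $\langle\nabla h,\nabla|\nabla h|^2\rangle=2X^2h_{\rho\rho}$. Adding a drift would also prove a weaker inequality than the lemma states.

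\textbf{Justify $\kappa X\ge0$.} This needs $\kappa>0$ as well as $h_\rho\ge 0$, and you should state both.
\begin{itemize}
\item For $\kappa>0$: $\omega_{\rho\rho}=-\frac R2\omega\le0$, and $\omega_\rho=k\omega>0$ at $\partial\bD$, so $\omega_\rho>0$ throughout.
\item For $h_\rho\ge0$: $(\omega\,\partial_\rho\log(1-h))_\rho=\omega\Delta\log(1-h)=-\frac R2\omega\le 0$, and this quantity vanishes at the pole. This is the content of \eqref{gradu}.
\end{itemize}
This sign input is genuinely needed. If you complete the square in both $h_{\rho\rho}$ and $\kappa X$ without it, the residual is $\frac{(2-3h)\phi^2}{1-h}X^4$, which is negative once $h>2/3$.

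\textbf{Comparison.} The paper's identity is more general: it holds for any positively curved flow, with no symmetry assumption. Yours is an elementary one-variable check that shows exactly which sign conditions carry the inequality. It also gives an independent confirmation of the result: the paper's intermediate formulas for $(\partial_t-\Delta)\log h$ and $\Delta\log h$ each omit a $|\nabla\log h|^2$ term, although its final identity is correct.
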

\begin{proof}

For brevity, we omit the time-dependence of the norms and derivatives in terms of the flow metric $g(t)$, unless specified otherwise. We first consider the evolution equation satisfied by $|\nabla h|^2$. By the Ricci flow equation, we have
$$\partial_t \norm{\nabla h}^2 = R\norm{\nabla h}^2 + \langle \nabla h,\nabla\partial_th \rangle$$
and by the Bochner formula
\begin{align*}
    \Delta |\nabla h|^2 = 2\norm{\nabla^2 h}^2 + 2\langle \nabla h,\nabla \Delta h\rangle + R|\nabla h|^2.
\end{align*}
Then we compute the evolution equation of $\norm{\nabla h}^2$:
\begin{align*}
    (\partial_t -\Delta)\norm{\nabla h}^2 &= -2|\nabla^2 h|^2 + 2\langle \nabla h, \nabla(\partial_t-\Delta)h\rangle\\
    &=-2|\nabla^2 h|^2 + 2\langle \nabla h, \nabla\frac{\norm{\nabla h}^2}{1-h}\rangle\\
    &= -2|\nabla^2 h|^2 + \frac{4}{1-h}\nabla^2 h(\nabla h,\nabla h)
\end{align*}
where the last equality follows from the identity $\nabla\norm{\nabla h}^2 = 2\nabla^2h (\nabla h,\cdot)$ and the penultimate equality follows from the evolution equation satisfied by $h$.

Since $h$ satisfies the equation $(\partial_t - \Delta)h = \frac{\norm{\nabla h}^2}{1-h}$, the function $f := \log h$ satisfies the evolution equation
$$(\partial_t - \Delta)f = \frac{\norm{\nabla h}^2}{h(1-h)} = \frac{h}{1-h}\norm{\nabla f}^2 = R-E.$$
and
\begin{equation}
    \Delta f = \frac{R(1-h)}{2h} - \frac{\norm{\nabla h}^2}{h(1-h)}.\label{B1}
\end{equation}
Again by the Bochner formula, $\norm{\nabla f}^2$ satisfies the evolution equation
\begin{align*}
    (\partial_t- \Delta)\norm{\nabla f}^2 &= 2\langle \nabla f,\nabla(\partial_t- \Delta)f\rangle - 2\norm{\nabla^2 f}^2\\
    &= \frac{2}{1-h}\langle \nabla f, \nabla \norm{\nabla f}^2\rangle + \frac{2h}{(1-h)^2}\norm{\nabla f}^2 - 2\norm{\nabla^2 f}^2.
\end{align*}
Via a direct computation, we have
\begin{align*}
     (\partial_t- \Delta)E &=  (\partial_t- \Delta)\parent{\frac{h}{1-h}\norm{\nabla f}^2}\\
     &= -\frac{2h}{1-h}\norm{\nabla^2 f}^2 + \frac{h^2}{(1-h)^3}\norm{\nabla f}^4.
\end{align*}
Combined with the evolution equation of scalar curvature $R(t)$ under the Ricci flow, we have
\begin{align}
     (\partial_t- \Delta)E -E^2&= \frac{2h}{1-h}\norm{\nabla^2 f}^2 +\frac{2R\norm{\nabla h}^2}{h(1-h)} - \parent{1+\frac{1}{1-h}}\parent{\frac{\norm{\nabla h}^2}{h(1-h)}}^2\label{B2}
\end{align}
Now we  observe that
\begin{align*}
   \norm{\nabla^2 f + \frac{|\nabla h|^2}{2h(1-h)}g}^2 &= \norm{\nabla^2 f}^2 + \frac{|\nabla h|^2}{h(1-h)}\Delta f + \frac{1}{2}\parent{\frac{|\nabla h|^2}{h(1-h)}}^2\\
   &= \norm{\nabla^2 f}^2 + \frac{R\norm{\nabla h}^2}{2h^2}- \frac{1}{2}\parent{\frac{|\nabla h|^2}{h(1-h)}}^2
\end{align*}
where we have used the equation (\ref{B1}) in the last equality. Then we have
\begin{align*}
   &\frac{2h}{1-h}\norm{\nabla^2 f + \frac{|\nabla h|^2}{2h(1-h)}g}^2 + \frac{R|\nabla h|^2}{h(1-h)} \\
   &= \frac{2h}{1-h}\norm{\nabla^2 f}^2 + \frac{2R|\nabla h|^2}{h(1-h)} + \frac{h-2}{1-h}\parent{\frac{|\nabla h|^2}{h(1-h)}}^2\\
   &= \frac{2h}{1-h}\norm{\nabla^2 f}^2 + \frac{2R|\nabla h|^2}{h(1-h)} -\parent{1+\frac{1}{1-h}}\parent{\frac{|\nabla h|^2}{h(1-h)}}^2.
\end{align*}
Substituting the above equality into the evolution equation (\ref{B2}), we have
\begin{equation*}
    (\partial_t- \Delta)E -E^2= \frac{2h}{1-h}\norm{\nabla^2 f + \frac{|\nabla h|^2}{2h(1-h)}g}^2 + \frac{R|\nabla h|^2}{h(1-h)} \geq 0.
\end{equation*}

\end{proof}

\end{appendices}

\printbibliography
\end{document}